\theoremstyle{plain}
\newtheorem{theorem}{Theorem}[section]
\newtheorem{lemma}[theorem]{Lemma}
\newtheorem{proposition}[theorem]{Proposition}
\newtheorem{hyp}[theorem]{Assumption}
\theoremstyle{remark}
\newtheorem{remark}[theorem]{Remark}
\newtheorem*{notation}{Notation}
\def\C{{\mathbf C}}
\def\R{{\mathbf R}}
\def\N{{\mathbf N}}
\def\O{\mathcal O}
\def\({\left(}
\def\){\right)}
\def\<{\left\langle}
\def\>{\right\rangle}
\def\le{\leqslant}
\def\ge{\geqslant}
\def\Tend#1#2{\mathop{\longrightarrow}\limits_{#1\rightarrow#2}}
\def\si{{\sigma}}
\def\ta{{\tt a}^\hbar}
\def\tR{{\tt R}^\hbar}
\def\tw{{\tt w}^\hbar}
\def\C{{\mathbb C}}
\def\R{{\mathbb R}}
\def\N{{\mathbb N}}
\def\d{{\partial}}
\def\uu{{\bf u}}
\def\ee{{\rm e}}
\def\eps{\varepsilon}
\def\op_#1{\mathrel{\mathop{{\rm op}_{#1}}}}
\def\d{{\partial}}
\def\build#1_#2^#3{\mathrel{
\mathop{\kern 0pt#1}\limits_{#2}^{#3}}}
\def\td_#1,#2{\mathrel{
\mathop{\build\longrightarrow_{#1\rightarrow #2}^{}}}}
\def\lim_#1,#2{\mathrel{
\mathop{\build{\rm lim}_{#1\rightarrow#2}^{}}}}
\def\eps{\varepsilon}
\def\op{{\rm op}}
\def\ol{\overline}
\DeclareMathOperator{\RE}{Re}
\DeclareMathOperator{\IM}{Im}
\numberwithin{equation}{section}
\begin{document}

\title[Nonlinear Landau-Zener formula]{A nonlinear Landau-Zener formula}
\author[R. Carles]{R\'emi Carles}
\begin{abstract}
  We consider a system of two coupled ordinary differential equations
  which appears as an envelope equation in Bose--Einstein
  Condensation. This system can be viewed as a nonlinear extension of
  the celebrated model introduced by Landau and Zener. We show how the
  nonlinear system may appear from different physical models. We focus
  our attention on the large 
  time behavior of the solution. We show the existence of a nonlinear
  scattering operator, which is reminiscent of long range scattering
  for the nonlinear Schr\"odinger equation, and which can be compared
  with its linear counterpart. \\
  {\bf Keywords}: Nonlinear scattering, Landau-Zener formula, eigenvalue crossing.
\end{abstract}
\thanks{This work was supported by the French
ANR projects
  R.A.S. (ANR-08-JCJC-0124-01) and SchEq
  (ANR-12-JS01-0005-01).}
\address{CNRS \& Univ. Montpellier 2\\ Math\'ematiques, CC~051\\ 34095
  Montpellier\\ France}
\email{Remi.Carles@math.cnrs.fr}
\author[C. Fermanian]{Clotilde~Fermanian-Kammerer}
\address{LAMA, UMR CNRS 8050,
Universit\'e Paris EST\\
61, avenue du G\'en\'eral de Gaulle\\
94010 Cr\'eteil Cedex\\ France}
\email{Clotilde.Fermanian@u-pec.fr}
\maketitle

\section{Introduction}
\label{sec:intro}

\subsection{Physical motivation}
\label{sec:motiv}

In the 30's, questions about adiabaticity have  begun to be studied
and major contributions have been brought by Landau and Zener,
independently (see~\cite{La} and~\cite{Ze}). The system studied by
Zener reads 
\begin{equation}\label{eq:LZ}
-i \partial_s u=
V(s,z) u ; \quad u(0)=u_0,
 \end{equation}
where $u=\begin{pmatrix}u_1\\u_2\end{pmatrix}\in \C^2$, and the potential $V$ is given by  
$$V( s , z ) = 
\begin{pmatrix}
   s & z \\
 z & - s
\end{pmatrix}.
$$
This system presents an eigenvalue crossing: the eigenvalues
of the matrix $V(s,z)$ are $\sqrt{s^2+z^2}$ and $-\sqrt{s^2+z^2}$, and
they cross when $s=0$ and $z=0$.  One can easily check that the
eigenspace associated with 
$\sqrt{s^2+z^2}$ is asymptotic to $\R\begin{pmatrix}1\\
  0\end{pmatrix}$ when $s\rightarrow +\infty$, and to
$\R\begin{pmatrix}0\\ 1\end{pmatrix}$ when $s\rightarrow
-\infty$. Therefore, a  solution $u(s,z)$ asymptotic to  $\begin{pmatrix}0
  \\\alpha(s,z)\end{pmatrix}$ when $s\rightarrow -\infty$ is polarized
along the $+$ mode. The adiabatic issue consists in asking whether
such a solution is still polarized along the $+$ mode when
$s\rightarrow +\infty$. The adiabatic character of this solution is
characterized by the ratio   $|u_1(s,z)|^2/|\alpha(s,z)|^2$.
It has been  proved in the 30's that for~\eqref{eq:LZ}, this 
ratio is described by the Landau-Zener  transition coefficient~${\rm
  e}^{-\pi z ^2}$ which characterizes the transfer between modes (see
Section~\ref{sec:scat-intro} below for details).  More generally, for
an eigenvalue gap of the form~$\sqrt{a^2s^2+b^2z^2}+\O(s^2)$ near
$s=0$, the transition coefficient was expected to be ${\rm
  e}^{-\pi z^2b^2/ a}$. 

$ $

Mathematical  proofs of the Landau-Zener formula have been given in
the 90's by Hagedorn in~\cite{Hagedorn91} for small gaps and, then  by
Joye in~\cite{Joye94}.  Later,
in~\cite{CdV1}, \cite{CdV2}, Colin de Verdi\`ere has performed a
classification of pseudodifferential operators with symbols presenting
crossings of two eigenvalues.  Under some genericity assumptions that
we will not detail here, he derives simple  model systems  thanks to
a change of coordinates in the phase space (a canonical transform) and
the conjugation by a Fourier Integral Operator  (including a Gauge
transform).   In the case of a system of evolution equations, the
reduced systems are closely related with~(\ref{eq:LZ})
(see~(\ref{eq:LZ2}) below); for this reason, the system~(\ref{eq:LZ})
appears as a toy-model for understanding  eigenvalue crossings.  

$ $

Recently, a nonlinear version of
this system has been introduced in \cite{BiQi00}, of the form
\begin{equation}\label{eq:modelphys1}
  i\frac{\partial}{\partial t}
  \begin{pmatrix}
    u_1 \\
u_2
  \end{pmatrix}
=
H(\gamma)
  \begin{pmatrix}
    u_1 \\
u_2
  \end{pmatrix},
\end{equation}
with the Hamiltonian given by
\begin{equation}\label{eq:modelphys2}
  H(\gamma) = 
  \begin{pmatrix}
    \gamma( t) + \delta\(|u_2|^2-|u_1|^2\) & z\\
z & -\gamma (t)  -\delta \(|u_2|^2-|u_1|^2\) 
  \end{pmatrix},
\end{equation}
where, according to the terminology in \cite{BiQi00},
$\gamma(t)=\gamma_1 t$ denotes the level separation, $z$ is the
coupling 
constant between the two levels, and $\delta$ is a parameter
describing the nonlinear interaction. This nonlinear two-level model can be used
to understand Landau-Zener tunneling of a Bose-Einstein condensate
between Bloch bands in an optical lattice: this has been observed in 
e.g. \cite{PRA02}. In Section~\ref{sec:deriv}, we present a derivation
of this model from the nonlinear Schr\"odinger equation in a rotating
frame and  a periodic potential. Physical properties of the above
system have been investigated in
e.g. \cite{PRL03,KhRu05,Kh10,PRA02b}. We also show how this model can
arise under the influence of a double-well potential, as in
\cite{Kh10} (see Section~\ref{sec:deriv} for a rapid presentation, and
the appendix for a rigorous proof). 
The goal of this paper is to
recast this model in a mathematical framework, and to study some of
its properties, in particular as far as the large time regime is
concerned.  

\subsection{Mathematical setting}
\label{sec:setting}

The classification of crossings
performed in \cite{CdV1} and~\cite{CdV2} in the one hand, and in  \cite{FG02}
on the other hand,  produces  model problems of the form  
 \begin{equation}\label{eq:LZ2}
-i \partial_s u=
\begin{pmatrix}
   s & G \\
 G^* & - s
\end{pmatrix} u;\quad u(0)=u_0\in{\mathcal H}^2,
 \end{equation}
 where ${\mathcal H} $ is a Hilbert space and $G$ an operator on
 ${\mathcal H}$.  Equation~\eqref{eq:LZ}  corresponds to ${\mathcal
   H}=\C$ and $G=z\in\R$, as in~\cite{La} and~\cite{Ze} (see
 also~\cite{FG02} and~\cite{CdV1}).  Other choices of the pair
 $({\mathcal H},G)$ are relevant:  
 \begin{enumerate}
 \item   In~\cite{CdV2} and~\cite{Ha94}, $G=z_1+iz_2$ and ${\mathcal
     H}=\C$. 
 \item  In~\cite{CdV2} and~\cite{Fe04}, $G=\partial_z-z$ and
   ${\mathcal H}=L^2(\R)$.
 \item In~\cite{Ha94}, \cite{FL08} and~\cite{Fe03},  $G =q(z)$ is a
   quaternion, $z\in\R^4$ and ${\mathcal H}=\C^2$.
 \item In~\cite{Fe03} and~\cite{FG02}, $G$ is a semiclassical pseudodifferential
   operator on the space ${\mathcal H}=L^2(\R^k)$, $k=1,2,3\dots$
 \end{enumerate}

  For this reason, we will focus on the following abstract problem
   \begin{equation}\label{eq:NLLZ2}
-i \partial_s u=
\begin{pmatrix}
   s & G \\
 G^* & - s
\end{pmatrix} u +\delta  F(u)u;\quad u(0)=u_0\in{\mathcal H}^2,
 \end{equation}
  which is the nonlinear counterpart of~\eqref{eq:LZ2} and contains the systems coming from physics such as~(\ref{eq:modelphys1})--\eqref{eq:modelphys2}. 
The nonlinearity $F:{\mathcal H}^2\to {\mathcal L}({\mathcal H}^2)$ is of the form 
$$  F(u) =  {\rm diag}(F_1(u),F_2(u)),$$
where for $(A,B)\in\R^2$, the operator ${\rm diag}(A,B)$ acts on
${\mathcal H}^2$ as  
$${\rm diag}(A,B) (u_1,u_2)=(Au_1,Bu_2),$$ 
and where the functions $F_j:\C^2\to\R$ satisfy 
\begin{equation}
  \label{def:f}
  F_j(u)=
    f_j(\|u_1\|^2_{\mathcal H},\|u_2\|^2_{\mathcal H}),
\end{equation}
with $f_j\in{\mathcal C}^2(\R^2;\R)$ and $\nabla^2f_j$
bounded. By the change of variable
$s=t\sqrt{\gamma_1}$, we are left with~(\ref{eq:NLLZ2}) with
$G=z/\sqrt{\gamma_1}$ and $F_j(u)=(-1)^j\delta\gamma_1^{-1/2}
(|u_2|^2-|u_1|^2)$ for $j\in\{1,2\}$. 

\smallbreak

  We make the following assumption on the operator $G$: for any  $\chi\in{\mathcal C}^0(\R)$,
  \begin{equation}\label{commutation}
  \chi(GG^*)G=G\chi(G^*G)\;\;{\rm and}\;\;\chi(G^*G)G^*=G^*\chi(GG^*),
  \end{equation}
  where the operators $\chi(GG^*)$ and $\chi(G^*G)$ are defined by functional calculus.
  This assumption is satisfied in the  first three examples
  above. The situation is more complicated in the fourth one where it only holds at leading order in the semiclassical parameter.

We also assume that the domains ${\mathcal
  D}(GG^*)$ and ${\mathcal D}(G^*G)$  are dense subsets of
${\mathcal H}$.  

  \begin{lemma}\label{lem:existence}
    Let $u_0\in {\mathcal H}^2$. Under the above assumptions,
    \eqref{eq:NLLZ2} has a unique, global solution $u \in
    C(\R;{\mathcal H}^2)$. It satisfies the following
    conservation law
\begin{equation*}
  \frac{d}{ds}\(\|u_1(s)\|_{\mathcal H}^2 + \|u_2(s)\|_{\mathcal H}^2\)=0. 
\end{equation*}
  \end{lemma}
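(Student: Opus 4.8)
The plan is to treat \eqref{eq:NLLZ2} as a semilinear evolution equation and apply a fixed-point argument for local existence, then use the conservation law to upgrade to a global solution. First I would rewrite the system in integral (Duhamel) form. The linear part $-i\partial_s u = \begin{pmatrix} s & G \\ G^* & -s\end{pmatrix}u$ generates a two-parameter family of unitary propagators $U(s,s_0)$ on $\mathcal H^2$: indeed, the operator $\begin{pmatrix} s & G \\ G^* & -s\end{pmatrix}$ is self-adjoint (on the appropriate domain, using density of $\mathcal D(GG^*)$ and $\mathcal D(G^*G)$) for each fixed $s$, it depends continuously on $s$, and the commutation hypothesis \eqref{commutation} lets one diagonalize it through the spectral decomposition of $G^*G$ and $GG^*$ — in fact one can write $U(s,s_0)$ essentially explicitly in terms of $\exp\big(\pm i\int_{s_0}^s \sqrt{\tau^2 + G^*G}\,d\tau\big)$ type expressions. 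In any case what matters is that $U(s,s_0)$ is a strongly continuous family of \emph{unitary} operators on $\mathcal H^2$ with $U(s,s)=\mathrm{Id}$ and the cocycle property $U(s,\sigma)U(\sigma,s_0)=U(s,s_0)$. Duhamel's formula then reads
\begin{equation*}
  u(s) = U(s,0)u_0 + i\delta \int_0^s U(s,\sigma) F(u(\sigma)) u(\sigma)\, d\sigma .
\end{equation*}

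Next I would run a Banach fixed point in $C([-T,T];\mathcal H^2)$ for $T$ small. The key estimate is Lipschitz continuity of the map $u\mapsto F(u)u$ on bounded subsets of $\mathcal H^2$. Since $F(u)=\mathrm{diag}(F_1(u),F_2(u))$ with $F_j(u)=f_j(\|u_1\|_{\mathcal H}^2,\|u_2\|_{\mathcal H}^2)$, $f_j\in\mathcal C^2$ with bounded $\nabla^2 f_j$ (hence $\nabla f_j$ Lipschitz and locally bounded), the real numbers $F_j(u)$ depend Lipschitz-continuously on $u$ over balls of $\mathcal H^2$, and multiplication by a bounded real scalar is Lipschitz; combining, $\|F(u)u - F(v)v\|_{\mathcal H^2} \le C(R)\|u-v\|_{\mathcal H^2}$ for $\|u\|,\|v\|\le R$. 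Because $U(s,\sigma)$ is unitary, the integral operator on the right of Duhamel gains a factor $|\delta| T C(R)$, so for $T$ small enough (depending only on $\|u_0\|_{\mathcal H^2}$ and $\delta$) the map is a contraction on a suitable ball, giving a unique local solution; a standard continuation argument extends it to a maximal interval.

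The conservation law is then what makes the solution global. Formally, testing the equation against $u$ and taking the real part, the skew-adjoint linear part contributes nothing, and since $F(u)$ is self-adjoint (real diagonal), $\mathrm{Re}\,\langle i\delta F(u)u, u\rangle_{\mathcal H^2} = \delta\,\mathrm{Re}\,(i\langle F(u)u,u\rangle) = 0$ because $\langle F(u)u,u\rangle$ is real; hence $\frac{d}{ds}\big(\|u_1(s)\|_{\mathcal H}^2+\|u_2(s)\|_{\mathcal H}^2\big)=0$. Rigorously one justifies this either by differentiating $\|u(s)\|_{\mathcal H^2}^2$ along the regularized/approximate solutions and passing to the limit, or directly from the Duhamel formula using unitarity of $U$: $\|u(s)\|_{\mathcal H^2}=\|u_0\|_{\mathcal H^2}$ follows once one checks the perturbative integral term does not change the norm, which is cleanest via the differential identity on the local solution. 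The a priori bound $\|u(s)\|_{\mathcal H^2}=\|u_0\|_{\mathcal H^2}$ prevents blow-up of the norm, so the maximal interval is all of $\R$ and $u\in C(\R;\mathcal H^2)$.

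The main obstacle, and the only genuinely non-routine point, is the rigorous construction of the unitary propagator $U(s,s_0)$ for the time-dependent self-adjoint generator $\begin{pmatrix} s & G\\ G^* & -s\end{pmatrix}$ on the infinite-dimensional space $\mathcal H^2$ — in particular identifying a common core, checking essential self-adjointness for each $s$, and invoking a suitable time-dependent generation theorem (Kato's theory, or here more simply the explicit diagonalization permitted by \eqref{commutation}, which reduces everything to scalar oscillatory phases functional-calculus-applied to the commuting self-adjoint operators $G^*G$ and $GG^*$). Once $U(s,s_0)$ is in hand as a strongly continuous unitary cocycle, the rest — contraction mapping, continuation, conservation of mass — is standard.
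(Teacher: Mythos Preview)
Your proposal is correct and follows essentially the same route as the paper: introduce the unitary two-parameter propagator $U(s,s_0)$ for the linear problem, rewrite \eqref{eq:NLLZ2} in Duhamel form, run a Cauchy--Lipschitz fixed point for local existence with lifespan depending only on $\|u_0\|_{\mathcal H^2}$, observe the conservation law from the real-valuedness of $F_j$, and conclude global existence. One minor caveat: the aside about writing $U(s,s_0)$ ``essentially explicitly'' as $\exp\big(\pm i\int_{s_0}^s \sqrt{\tau^2+G^*G}\,d\tau\big)$ is too optimistic, since the eigenprojectors $\Pi^\pm(s)$ depend on $s$ and do not commute at different times --- but as you correctly note, all that is actually needed is the unitary cocycle property, which follows from self-adjointness of $V(s)$ and standard time-dependent generation results.
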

  \begin{proof}
    Denote by $U(s_2,s_1)$ the linear operator which maps $u^{\rm lin}(s_1)$ to
    $u^{\rm lin}(s_2)$, where $u^{\rm lin}$ solves the linear equation
    \begin{equation*}
      -i \partial_s u^{\rm lin}=
\begin{pmatrix}
   s & G \\
 G^* & - s
\end{pmatrix} u^{\rm lin} .
    \end{equation*}
It satisfies $U(s,s)={\rm Id}$, $U(s,\tau)U(\tau,\si)=U(s,\si)$, and
is unitary on ${\mathcal H}^2$. 
By Duhamel's principle, \eqref{eq:NLLZ2} becomes
\begin{equation*}
  u(s) = U(s,0)u_0 + i \delta \int_0^s U(s,\si) \(F(u)u\)(\sigma)d\sigma.
\end{equation*}
Local existence follows by a standard fixed point argument
(Cauchy-Lipschitz), with a local existence time which depends only on
$\|u_0\|_{{\mathcal H}^2}$. We then note the conservation
law announced in the lemma, which follows from the fact that the
functions $F_j$ are real-valued. This implies global existence, and the
lemma. 
  \end{proof}
Throughout this paper, we consider normalized initial data
so that we have 
\begin{equation}\label{eq:conserv}
  \|u_1(s)\|_{\mathcal H}^2 + \|u_2(s)\|_{\mathcal H}^2=1,\quad
  \forall s\in \R.
\end{equation}
 We prove a scattering result for initial data
   which are in the range of ${\bf 1}_{V(0)^2\le R}$ for some $R>0$.  
   More precisely, we introduce  a cut-off operator depending on $G$:
   let $\theta$ be  
 a smooth cut-off function , $\theta\in{\mathcal C}_0^\infty(\R)$ with
 $0\le \theta\le 1$,  $\theta(u)=0$ for $|u|>1$ and $\theta(u)=1$ for
 $|u|<1/2$. Then,  for $R>0$ we set 
  $$
  \Theta_R={\rm diag}\left( \theta\left({GG^*\over R^2}\right),
   \theta\left({G^*G\over R^2}\right) \right).
  $$
  Because of~(\ref{commutation}), the operator $\Theta_R$ commutes with $V(s)$ for all $s\in \R$ and
  $\Theta_RV(s)$ is a bounded operator on ${\mathcal H}\times{\mathcal
    H}$ with norm $\sqrt{s^2+R^2}$.  Besides,  a simple computation  shows that $u_R(s)=\Theta_R u(s)$ satisfies 
$\frac{1}{i} \partial_s u_R=V(s) u_R + F(u) u_R$ with  $u_R(0)=\Theta_R
u_0=u_0$. Therefore,  we have the following result.

  \begin{lemma}\label{lem:localisation}
  Suppose $u_0=\Theta_Ru_0$ for some $R>0$, then for all $s\in\R$, the
  solution of~\eqref{eq:NLLZ2} satisfies $u(s)=\Theta_Ru(s)$. 
  \end{lemma}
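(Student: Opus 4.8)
The plan is to exploit the commutation relation already observed in the paragraph preceding the statement, namely that $\Theta_R$ commutes with $V(s)$, together with the diagonal structure of the nonlinearity, and then to argue by uniqueness. First I would set $u_R(s)=\Theta_R u(s)$, where $u$ is the global solution provided by Lemma~\ref{lem:existence}, and write down the equation it satisfies. Since $\Theta_R$ is a fixed bounded operator, it commutes with $\partial_s$; since it commutes with $V(s)$ by~\eqref{commutation}; and since $F(u)=\mathrm{diag}(F_1(u),F_2(u))$ is a diagonal multiple of scalars $F_j(u)\in\R$ which therefore commutes with the diagonal operator $\Theta_R$, applying $\Theta_R$ to~\eqref{eq:NLLZ2} gives
\begin{equation*}
  -i\partial_s u_R = V(s)u_R + \delta F(u)u_R,\qquad u_R(0)=\Theta_R u_0=u_0.
\end{equation*}

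The key point is that this is \emph{not} a closed equation for $u_R$ — the coefficient $F(u)$ still involves the full solution $u$, not $u_R$ — but that is exactly what makes the uniqueness argument work: viewing $F(u)(\cdot)$ as a given bounded, strongly continuous, time-dependent linear operator $L(s):=\delta F(u(s))$ on ${\mathcal H}^2$, both $u$ and $u_R$ solve the same \emph{linear} non-autonomous Cauchy problem $-i\partial_s v = V(s)v + L(s)v$, with the same initial datum $u_0$ (using $\Theta_R u_0=u_0$, which holds by hypothesis). By the Duhamel formulation with the propagator $U(s,\sigma)$ from the proof of Lemma~\ref{lem:existence},
\begin{equation*}
  v(s)=U(s,0)u_0+i\int_0^s U(s,\sigma)L(\sigma)v(\sigma)\,d\sigma,
\end{equation*}
and a Gronwall estimate on $\|u(s)-u_R(s)\|_{{\mathcal H}^2}$ — using that $\|L(\sigma)\|\le \delta\sup_j\|F_j(u(\sigma))\|$ is locally bounded in $\sigma$, which follows from $f_j\in{\mathcal C}^2$ with bounded second derivatives and the conservation law~\eqref{eq:conserv} — forces $u\equiv u_R$ on all of $\R$. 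Hence $u(s)=\Theta_R u(s)$ for every $s$, which is the claim.

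I expect no serious obstacle here; the statement is essentially a bookkeeping consequence of facts assembled just above it. The one point deserving a line of care is the justification that $\Theta_R$ commutes with $V(s)$ as \emph{unbounded} operators on the relevant domains: one should note that $V(s)$ restricted to the range of $\Theta_R$ is bounded (with norm $\sqrt{s^2+R^2}$, as already stated), so all the manipulations take place among bounded operators and genuine equalities in ${\mathcal H}^2$, and that $\Theta_R$ preserves ${\mathcal D}(G^*G)\times{\mathcal D}(GG^*)$ by functional calculus. Alternatively, and perhaps more cleanly, one can avoid differentiating altogether and run the whole argument at the level of the Duhamel integral equation: apply $\Theta_R$ to the fixed-point identity for $u$, commute it past $U(s,\sigma)$ (which commutes with $V$ hence with its propagator) and past the diagonal scalar operator $F(u)$, to see that $u_R$ satisfies the \emph{same} integral equation as $u$ with the same datum, and conclude by the uniqueness part of the fixed-point argument in Lemma~\ref{lem:existence}.
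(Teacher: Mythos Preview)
Your argument is correct and coincides with the paper's approach: the authors observe just before the lemma that $u_R=\Theta_R u$ satisfies $\frac{1}{i}\partial_s u_R=V(s)u_R+F(u)u_R$ with $u_R(0)=u_0$, and invoke uniqueness. Your added remarks on domain issues and the Duhamel/Gronwall formulation simply make explicit what the paper leaves as ``a simple computation.''
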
 
 Typically, in the physical examples presented in
 Section~\ref{sec:deriv}, the assumption $u_0=\Theta_Ru_0$ consists in
 saying that some physical parameter (whose value is fixed in
 practise) belongs to a bounded set.
 
 \begin{notation}
   For two  real-valued functions $a(s)$ and $b(s)$, we write 
   \begin{equation*}
     a(s) = \O_R\(b(s)\),\quad s\in I,
   \end{equation*}
whenever there exists $C_R$ depending only on $R$ such that
\begin{equation*}
  |a(s)|\le C_R|b(s)|, \quad s\in I.
\end{equation*}
Similarly, for $u(s)\in \mathcal H$, we write $u(s)= \O_R\(b(s)\)$
if $\|u(s)\|_{\mathcal H} = \O_R\(b(s)\)$.
 \end{notation}

\subsection{Scattering}
\label{sec:scat-intro}

We introduce the phase  function $\varphi$ given by 
$$
\varphi(s,\lambda )={s^2\over 2} + {\lambda\over 2} {\ln} |s|.
$$
We can describe the large time asymptotics of $u$. 
We write all the large time results in the case where
  ${\mathcal H}$ 
  is a vector space of finite dimension, which implies that the unit
  ball of ${\mathcal H}$ is compact.

\begin{theorem}\label{theo:scat}

 \noindent Assume that $u_0=\Theta_R u_0$ for some $R>0$. Then, there
 exist unique pairs $\alpha=(\alpha_1,\alpha_2)\in{\mathcal H}^2$ and 
 $\omega=(\omega_1,\omega_2)\in{\mathcal H}^2$, 
   such that:\\
$1.$ As $s$ goes to $-\infty$,
    \begin{align*}
     u_1(s) & =  {\rm e} ^{i\delta F_1(\alpha)s+ i\varphi(s,GG^*)}
    \alpha_{1}+\O_R\(\frac{1}{|s|}\),\\
    u_2(s) & =  {\rm e} ^{i\delta F_2(\alpha)s-i\varphi(s,G^*G)}
    \alpha_{2}+\O_R\(\frac{1}{|s|}\).
    \end{align*}
$2.$ As $s$ goes to $+\infty$,
   \begin{align*}
    u_1(s) & =  {\rm e} ^{i\delta F_1(\omega)s+i\varphi(s,GG^*)}
    \omega_{1}+\O_R\(\frac{1}{s}\),\\
    u_2(s) & =  {\rm e} ^{i\delta F_2(\omega)s-i\varphi(s,G^*G)}
    \omega_{2}+\O_R\(\frac{1}{s}\).
    \end{align*}
\end{theorem}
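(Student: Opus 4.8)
The plan is to treat the two limits $s\to+\infty$ and $s\to-\infty$ separately: by Lemma~\ref{lem:existence} the curve $s\mapsto u(s)$ is a fixed continuous curve on $[-1,1]$, so $\omega$ is read off from the behaviour on $[1,+\infty)$ and $\alpha$ from $(-\infty,-1]$, and the two analyses differ only through $s\mapsto -s$ together with the exchange of the two eigenvalues of $V$; I describe the one at $+\infty$. On $\mathrm{ran}\,\Theta_R$ the operators $P:=GG^*$ and $Q:=G^*G$ are bounded, self-adjoint and nonnegative with $\|P\|,\|Q\|\le R^2$, and by~\eqref{commutation} $G$ intertwines them, $\chi(P)G=G\chi(Q)$ for $\chi\in C^0(\R)$; since $\dim\mathcal H<\infty$ I diagonalise $P$ and $Q$ once and for all, which replaces every operator-valued phase below by finitely many scalar phases and makes the oscillatory integrals that occur genuinely one-dimensional.

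First I would pass to the eigenbasis of $V(s)$. For $s\neq0$ one has $0\notin\mathrm{sp}\,V(s)$ and $V(s)^2=\mathrm{diag}(s^2+P,\,s^2+Q)$, so $|V(s)|=\mathrm{diag}\!\big(\sqrt{s^2+P},\sqrt{s^2+Q}\big)$, and using~\eqref{commutation} one constructs an explicit unitary $W(s)$, with columns the normalised eigenvectors, such that $W(s)^*V(s)W(s)=\mathrm{diag}\!\big(\sqrt{s^2+GG^*},-\sqrt{s^2+G^*G}\big)$, such that $W(s)\to\mathrm{Id}$ as $s\to+\infty$ (diagonal blocks at rate $\O_R(1/s^2)$, off-diagonal blocks at rate $\O_R(1/s)$), and — crucially, much better than the generic adiabatic bound $\O_R(1/s)$ — $\partial_sW(s)=\O_R(1/s^2)$ with diagonal blocks even $\O_R(1/s^3)$ (the latter by the algebraic cancellation $(s^2+P)^{-1/2}-s^2(s^2+P)^{-3/2}=(s^2+P)^{-3/2}P$, which is where the block structure of $V^2$ is used). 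Writing $u=W(s)c$, $c=(c^+,c^-)$, equation~\eqref{eq:NLLZ2} becomes
\begin{equation*}
  -i\partial_s c^{+}=\sqrt{s^2+GG^*}\;c^{+}+\delta F_1(u)\,c^{+}+r^{+}(s),\qquad
  -i\partial_s c^{-}=-\sqrt{s^2+G^*G}\;c^{-}+\delta F_2(u)\,c^{-}+r^{-}(s),
\end{equation*}
where $r^{\pm}(s)=\O_R(1/s^2)+\O_R(1/s)\,c^{\mp}(s)$: the $\O_R(1/s^2)$ part collects the non-adiabatic term $W^*\partial_sW$ (its diagonal blocks anti-self-adjoint and $\O_R(1/s^3)$) and the self-adjoint diagonal correction $(W^*FW)_{11}-F_1=(F_2-F_1)W_{21}^*W_{21}$ of the nonlinearity, while the $\O_R(1/s)$ off-diagonal coupling is carried by $c^{\mp}$, whose phase differs from that of $c^{\pm}$ by $\int^s\!\big(\sqrt{\sigma^2+GG^*}+\sqrt{\sigma^2+G^*G}\big)d\sigma\sim s^2$; thus this last term is \emph{off-resonant}, with phase of derivative $\sim 2s$, so that after filtering it is integrable in $s$ upon a single integration by parts.

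Next I would filter out the diagonal dynamics: since $\sqrt{s^2+GG^*}$ commutes with $\int_1^s\sqrt{\sigma^2+GG^*}\,d\sigma$, set $w^{+}=\exp\!\big(-i\int_1^s\sqrt{\sigma^2+GG^*}\,d\sigma-i\delta\int_1^sF_1(u)\,d\sigma\big)c^{+}$ and, symmetrically (with $+i\int_1^s\sqrt{\sigma^2+G^*G}$ and $-i\delta\int_1^sF_2(u)$), $w^{-}$. Then $\partial_sw^{\pm}=e^{(\text{phase})}r^{\pm}$ is $\O_R(1/s^2)$ plus an off-resonant $\O_R(1/s)$ term, and integrating from $s$ to $+\infty$ — the first part absolutely, the second by one integration by parts carried out entrywise in the eigenbasis of $P,Q$ — shows $w^{\pm}(s)\to W^{\pm}\in\mathcal H$ with $\|w^{\pm}(s)-W^{\pm}\|_{\mathcal H}=\O_R(1/s)$. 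A by-product needed for the nonlinear phase: $\tfrac{d}{ds}\|c^{\pm}(s)\|_{\mathcal H}^2=-2\,\IM\langle r^{\pm},c^{\pm}\rangle$ has no non-oscillatory $1/s$ contribution (the $\sqrt{s^2+\cdot}$- and $F$-diagonal terms are self-adjoint; the non-adiabatic part is anti-self-adjoint, hence contributes nothing; the remaining off-diagonal piece is off-resonant and $\O_R(1/s)$), so $\|c^{\pm}(s)\|_{\mathcal H}^2=\ell^{\pm}+\O_R(1/s^2)$ with $\ell^{\pm}=\|W^{\pm}\|_{\mathcal H}^2$, and consequently $\|u_1(s)\|_{\mathcal H}^2-\ell^{+}$, $\|u_2(s)\|_{\mathcal H}^2-\ell^{-}$ are the sum of an off-resonant $\O_R(1/s)$ term (from the $W_{12}$-cross term) and an $\O_R(1/s^2)$ term. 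Hence $F_j(u(\sigma))=f_j(\ell^{+},\ell^{-})+(\text{off-resonant }\O_R(1/\sigma))+\O_R(1/\sigma^2)$, so $\delta\int_1^sF_j(u)\,d\sigma=\delta f_j(\ell^{+},\ell^{-})\,s+c_j+\O_R(1/s)$ — the nonlinear phase is genuinely affine in $s$, with no logarithmic divergence. Combining this with $\int_1^s\sqrt{\sigma^2+\lambda}\,d\sigma=\varphi(s,\lambda)+\mathrm{const}+\O(1/s^2)$ and $W(s)=\mathrm{Id}+\O_R(1/s)$ to return to $u=W(s)c$, one obtains exactly the stated expansions, with $\omega_1=e^{ic_1'}W^{+}$, $\omega_2=e^{ic_2'}W^{-}$ (the constant phases being absorbed), $\|\omega_1\|_{\mathcal H}^2=\ell^{+}$, $\|\omega_2\|_{\mathcal H}^2=\ell^{-}$, so that $F_j(\omega)=f_j(\ell^+,\ell^-)$ is consistent with the phase used. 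Uniqueness follows because $\|\omega_1\|^2=\lim\|u_1(s)\|^2$ and $\|\omega_2\|^2=\lim\|u_2(s)\|^2$ are forced, hence so are $F_1(\omega),F_2(\omega)$ and the phases, and then $\omega_1=\lim_{s\to+\infty}e^{-i\delta F_1(\omega)s-i\varphi(s,GG^*)}u_1(s)$ and likewise for $\omega_2$; the same argument on $(-\infty,-1]$ produces $\alpha$.

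The main obstacle is securing the two sharpened decay rates that defeat the non-integrability of $1/s$: (i) the bound $\partial_sW(s)=\O_R(1/s^2)$ (with diagonal blocks $\O_R(1/s^3)$), which beats the generic $\O_R(1/s)$ adiabatic estimate and relies on the block structure $V(s)^2=\mathrm{diag}(s^2+P,s^2+Q)$; and (ii) the convergence $\|c^{\pm}(s)\|_{\mathcal H}^2\to\ell^{\pm}$ at rate $\O_R(1/s^2)$ rather than $\O_R(1/s)$, which is what makes $\delta\int_1^sF_j(u)$ asymptotically affine and is the nonlinear counterpart of the logarithmic long-range phase. Both reduce to carefully separating the non-oscillatory contributions (which must be shown to be $\O_R(1/s^2)$ or smaller) from the off-resonant ones (phase of derivative $\sim 2s$, tamed by a single integration by parts), and it is precisely to control the operator-valued phases in those integrations by parts that finite-dimensionality of $\mathcal H$ is used.
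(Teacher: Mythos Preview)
Your approach is correct and shares the paper's core strategy---pass to the eigenbasis of $V$, show the eigencomponent norms converge at the sharp rate $\O_R(1/s^2)$ because every surviving term in $\tfrac{d}{ds}\|c^\pm\|^2$ is off--resonant, deduce that $\delta\int F_j(u)$ is asymptotically affine, and then read off the scattering state---but the organization is different in two respects worth noting.

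First, the paper does not diagonalise the bare $V(s)$. It introduces $M=\tfrac{\delta}{2}(F_1+F_2)$ and $m=\tfrac{\delta}{2}(F_1-F_2)$, gauges out the symmetric part via $v=e^{-i\int M}\,u$, and absorbs the antisymmetric part as a \emph{shift}: $-i\partial_s v = V(s+m(u(s)))\,v$. The projectors $\Pi^\pm(s)$ are then those of the shifted operator. The payoff is that the equation for $v$ has no residual nonlinear term on the right--hand side, so the commutator identity (Lemma~3.2) is clean; in particular when $F_1=F_2$ one is immediately back to the linear problem. You instead keep $V(s)$ unshifted, treat $\delta F_j(u)$ as a diagonal perturbation, and filter both the linear and the full nonlinear phase at once. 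This is more direct but means your IBP on the off--resonant coupling carries an extra $\delta(F_1-F_2)$ contribution (of harmless size $\O_R(1/s^2)$ after dividing by $\Lambda$). The paper's Lemma~4.2 in fact needs a \emph{second} integration by parts to reach the rate $\O_R(1/s^2)$ for the eigencomponent norms; your one--line justification ``a single integration by parts'' is a little optimistic, though the conclusion is right since every residual remains off--resonant.

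Second, for the actual extraction of $\omega$ the paper builds approximate solutions indexed by $n$, matches them to $v$ at time $n$, and uses compactness of the unit ball (finite $\dim\mathcal H$) to extract a limit of the resulting $\nu^n$. Your Cauchy argument for $w^\pm$ is more economical: once $\partial_s w^\pm$ is shown to be $\O_R(1/s^2)$ plus an off--resonant $\O_R(1/s)$ term, one IBP makes $\int_{s_1}^{s_2}\partial_s w^\pm$ small without ever invoking compactness. This is a genuine simplification over Section~4.3 of the paper (and would in principle extend beyond finite dimension, whereas you invoke finite $\dim\mathcal H$ only to reduce operator--valued phases to scalars---something the paper handles instead via the algebra $\mathcal A$ and the commutation hypothesis~\eqref{commutation}).
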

The above result is reminiscent of long range scattering in
nonlinear Schr\"odinger equations, as described first in
\cite{Ozawa91}: nonlinear effects are present both in the fact that
the amplitude of the functions $u_j$ undergoes a nonlinear influence
(this is what happens in nonlinear scattering in general),
and in the fact that oscillations are different from those of the
linear case (a typical feature of long range scattering), since the
linear phase $\varphi$ is not enough to describe large time
oscillations. An important difference though is that \eqref{eq:NLLZ2}
is not a dispersive equation, as can be seen from \eqref{eq:conserv}. 

\begin{remark}\label{rem:scatlin} When $\delta=0$ and $G$ is scalar
  ($G=z$), this result goes back to the 30's with the proofs~of Landau
  and Zener~\cite{La} and~\cite{Ze}. The original proof is based on the
  use of special functions; more recently, the proof of~\cite{FG00}
  relies on the analysis of oscillatory integrals. When $G$ is
  operator-valued, the theorem is proved in the 
  linear case ($\delta=0$) in~\cite[Proposition~7]{FG03}. We
  point out that there is a slight difference with the present
  framework,   due to nonlinear effects. In the linear case, one 
  associates with $u_0$ scattering states such that the asymptotics
  hold true for $\Theta_R u(s)$. Here, the scattering states depend on
  $R$ in a non trivial way.  
\end{remark}

\begin{remark}\label{rem:scatgen}
If the unit ball of ${\mathcal H}$ is not compact,  the  proof  of Section~\ref{sec:scat}  gives that for all $\eps>0$, there exists $s_\eps>0$, 
$\omega^\eps=(\omega_1^\eps,\omega_2^\eps)\in{\mathcal H}^2$ and some constants $(\Omega_+,\Omega_-)\in\R_+^2$ such that 
for all $s>s_\eps$, 
$$
\displaylines{
\left\| u_1(s) -  {\rm e}^{is\delta f_1(\Omega^+,\Omega^-) + i\varphi(s,GG^*)}\omega _1^\eps \right\| _{\mathcal H}\le \eps,\cr
\left\| u_2(s)- {\rm e}^{is\delta f_2(\Omega^+,\Omega^-) -i\varphi(s,G^*G)}\omega
  _2^\eps \right\|_{\mathcal H}\le \eps.\cr}$$
  Moreover, for $j\in\{1,2\}$, $F_j(u(s))$ goes to $f_j(\Omega_+,\Omega_-)$ as $s$ goes to $+\infty$.
\end{remark}

Conversely, wave operators are well-defined, as stated in the
following result.

\begin{proposition}\label{prop:waveop}
 Let $R>0$ and $\omega=(\omega_1,\omega_2)\in{\mathcal
    H}^2$ such that $\omega=\Theta_R \omega$. There exist $\phi\in
  [0,2\pi)$ and  a  
   solution $u \in C(\R;\mathcal H^2)$ to
  \eqref{eq:NLLZ2} such that, as $s\to +\infty$,
  \begin{align*}
& u_1(s) = {\rm e}^{i\delta
     F_1(\omega)s+ i\varphi(s,GG^*)+i\phi } 
    \omega_{1}+\O_R\(\frac{1}{s}\), \\
&u_2 (s) = {\rm e}^{i\delta
      F_2(\omega)s-i\varphi(s,G^*G)-i\phi} \omega_{2}+\O_R\(\frac{1}{s}\).
  \end{align*}
For a fixed $\phi$, such a function $u$ is unique.\\
In addition, if $F_1=F_2$, then we may take $\phi=0$. 
\end{proposition}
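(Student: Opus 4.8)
The plan is to construct the solution backwards from $s=+\infty$ by a contraction argument on a half-line $[s_0,+\infty)$, and then to propagate it to all of $\R$ using the global well-posedness of Lemma~\ref{lem:existence}. Fix $\omega=\Theta_R\omega$, fix for the moment a phase $\phi$, and take as asymptotic profile
\[
v(s)=\begin{pmatrix}{\rm e}^{i\delta F_1(\omega)s+i\varphi(s,GG^*)+i\phi}\,\omega_1\\[2mm]{\rm e}^{i\delta F_2(\omega)s-i\varphi(s,G^*G)-i\phi}\,\omega_2\end{pmatrix}.
\]
Since $\Theta_R$ commutes with $GG^*$, $G^*G$ and with every phase above, $\Theta_Rv(s)=v(s)$; and since each entry is the image of $\omega_j$ under a unitary operator, $\|v_j(s)\|_{\mathcal H}=\|\omega_j\|_{\mathcal H}$ is constant in $s$, so that $F_j(v(s))=F_j(\omega)$ is constant as well. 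Using $\partial_s\varphi(s,\lambda)=s+\lambda/(2s)$, the residual of the profile is
\[
\mathcal E(s):=-i\partial_sv-V(s)v-\delta F(v)v=\Big(\tfrac{GG^*}{2s}\,v_1-G\,v_2\ ,\ \tfrac{G^*G}{2s}\,v_2-G^*\,v_1\Big).
\]
Looking for $u=v+r$ with $r(s)\to0$ as $s\to+\infty$, moving the constant diagonal $\delta\,{\rm diag}(F_1(\omega),F_2(\omega))$ into the linear generator (its propagator $\widehat U(s,\sigma)$ equals, up to a global scalar phase and a translation of the time variable, the propagator $U$ of the linear equation, and still commutes with $\Theta_R$), and using $F(u)u-F(v)v=\delta\,{\rm diag}(F_1(\omega),F_2(\omega))\,r+\delta\big(F(u)-F(\omega)\big)u$, equation~\eqref{eq:NLLZ2} becomes the integral equation
\[
r(s)=i\int_s^{+\infty}\widehat U(s,\sigma)\Big[\mathcal E(\sigma)-\delta\big(F(u(\sigma))-F(\omega)\big)u(\sigma)\Big]\,d\sigma ,
\]
where $\|\big(F(u(\sigma))-F(\omega)\big)u(\sigma)\|_{{\mathcal H}^2}=\O_R\big(\|r(\sigma)\|_{{\mathcal H}^2}\big)$ because the $f_j$ are $C^2$ and $\|u_j(\sigma)\|_{\mathcal H}$ stays bounded by the conservation law. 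One solves this by Banach's fixed point theorem in a complete metric space of functions on $[s_0,+\infty)$ carrying the bound $\|r(s)\|_{\mathcal H}=\O_R(1/s)$, with $s_0=s_0(R)$ large; uniqueness of $r$ on $[s_0,+\infty)$, together with uniqueness for \eqref{eq:NLLZ2}, then yields uniqueness of $u$ for a fixed $\phi$. Everything takes place in ${\rm Range}(\Theta_R)^2$, so $u=\Theta_Ru$ (Lemma~\ref{lem:localisation}) and the $\O_R$-bounds make sense.

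The main obstacle is that, at face value, neither source term decays fast enough for the Duhamel integral to gain the factor $1/s$: the off-diagonal pieces $Gv_2$, $G^*v_1$ of $\mathcal E$ are only $\O_R(1)$, and $\big(F(u)-F(\omega)\big)u$ is only $\O_R(\|r\|)=\O_R(1/\sigma)$, so a crude estimate produces an $\O_R(\ln s)$ term. This is precisely the long-range feature already present in the linear Landau--Zener problem, and it is overcome by the same oscillatory-integral analysis that underlies Theorem~\ref{theo:scat}. Indeed, by \eqref{commutation} one has $G\,{\rm e}^{-i\varphi(\sigma,G^*G)}={\rm e}^{-i\varphi(\sigma,GG^*)}G$, so $Gv_2$ and $G^*v_1$ oscillate in $\sigma$ at frequency $\sim\sigma$ with the sign \emph{opposite} to the diagonal part of $\widehat U(s,\sigma)$; integration by parts in $\sigma$, using the precise large-time structure of the linear propagator (as in~\cite{FG03}), then turns the whole of $\widehat U(s,\sigma)\mathcal E(\sigma)$ into an $\O_R(1/s)$ contribution, the terms $\tfrac{GG^*}{2\sigma}v_1$ and $\tfrac{G^*G}{2\sigma}v_2$ being exactly what cancels the non-oscillatory residue of that integration by parts. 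For the nonlinear source one notes that a direct computation gives
\[
\frac{d}{d\sigma}\|u_1(\sigma)\|_{\mathcal H}^2=-\frac{d}{d\sigma}\|u_2(\sigma)\|_{\mathcal H}^2=-2\,\IM\langle u_1(\sigma),G\,u_2(\sigma)\rangle_{\mathcal H},
\]
which is $\O_R(1)$ and \emph{rapidly oscillating} in $\sigma$ (since $u_1$ and $Gu_2$ oscillate with opposite phases); hence $\|u_j(\sigma)\|_{\mathcal H}^2-\|\omega_j\|_{\mathcal H}^2$, and therefore $F_j(u(\sigma))-F_j(\omega)$, is a convergent rapidly oscillating $\O_R(1/\sigma)$ quantity, so that after one further integration by parts the contribution of $\big(F(u)-F(\omega)\big)u$ is $\O_R(1/s)$ as well. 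Concretely, this forces the fixed point to be set up in a space that records this oscillation — for instance by carrying as auxiliary unknowns the phase shifts $\Phi_j(s):=\delta\int_s^{+\infty}\big(F_j(u(\sigma))-F_j(\omega)\big)\,d\sigma$, which satisfy $\Phi_j'=-\delta\big(F_j(u)-F_j(\omega)\big)$ and are controlled in $\O_R(1/s)$; after the gauge change $u_j={\rm e}^{i\delta F_j(\omega)s-i\Phi_j(s)}b_j$ and an explicit translation of $s$, the system for $b$ is the linear Landau--Zener system \eqref{eq:LZ2} perturbed by the time-dependent coupling factor ${\rm e}^{\pm i(\Phi_1-\Phi_2)}=1+\O_R(1/s)$, at which stage one invokes the linear analysis and the linear wave operators of~\cite{FG03}.

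It remains to fix $\phi$ and to treat the case $F_1=F_2$. Run from the profile with $\phi=0$, the construction above produces a solution $u^{\rm raw}$ whose true large-time form is $u^{\rm raw}_1(s)={\rm e}^{i\delta F_1(\omega)s+i\varphi(s,GG^*)+ic_1}\omega_1+\O_R(1/s)$ and $u^{\rm raw}_2(s)={\rm e}^{i\delta F_2(\omega)s-i\varphi(s,G^*G)+ic_2}\omega_2+\O_R(1/s)$, where $c_1,c_2\in\R$ are leftover constants coming from the (convergent) nonlinear phases and from the linear Landau--Zener dynamics, determined by the solution. Since \eqref{eq:NLLZ2} is invariant under the gauge symmetry $u\mapsto{\rm e}^{i\tau}u$ (which preserves the class \eqref{def:f}), the function $u:={\rm e}^{-i(c_1+c_2)/2}u^{\rm raw}$ is a solution of the form stated in the Proposition, with $\phi=\tfrac12(c_1-c_2)\bmod2\pi$; this gives existence, and uniqueness for a fixed $\phi$ follows from the uniqueness of $r$ established above. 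Finally, when $F_1=F_2$ the nonlinearity is scalar: writing $u={\rm e}^{i\delta\int_0^sF_1(u(\sigma))\,d\sigma}\,\widetilde u$, one checks that $\widetilde u$ solves the linear system \eqref{eq:LZ2}, so that $c_1=c_2$ and one may take $\phi=0$, the statement then reducing to the linear wave operators of~\cite{FG03} dressed by a global phase.
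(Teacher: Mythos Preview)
Your strategy is fundamentally different from the paper's. The paper does not set up a contraction at $+\infty$; it argues by compactness. For each $n\in\N$ it solves \eqref{eq:NLLZ2} backward from $u^n(n)=u_{\rm app}(n)$ (your profile with $\phi=0$), extracts a convergent subsequence of $u^n(0)$ in the unit ball of $\mathcal H^2$ (this is where the hypothesis ${\rm dim}\,\mathcal H<\infty$ enters), and applies the already-proved Theorem~\ref{theo:scat} to the limit $\uu$. A short indirect argument, pitting the uniform-in-$n$ remainder bounds against a hypothetical mismatch, gives $\|\widetilde\omega_j\|_{\mathcal H}=\|\omega_j\|_{\mathcal H}$; the phases $\phi_1,\phi_2$ are then extracted as subsequential limits of $e^{in\delta(F_j(\omega)-F_j(\omega^n))}$, and the global gauge symmetry yields $\phi=(\phi_1-\phi_2)/2$ exactly as in your last paragraph. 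No estimate beyond those of Section~\ref{sec:scat} is needed.

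Your direct route has a genuine gap in the nonlinear closure. For a \emph{generic} element $r$ of the ball $\{\|r(\sigma)\|_{\mathcal H^2}\le C/\sigma\}$, the Duhamel integral $\int_s^\infty\widehat U(s,\sigma)\,\delta\big(F(u(\sigma))-F(\omega)\big)u(\sigma)\,d\sigma$ need not even converge: the source is $\O_R(1/\sigma)$, $\widehat U$ is unitary, and on the diagonal (in the $\Pi^\pm$ basis) the phase of $\widehat U(s,\sigma)$ cancels that of $v(\sigma)$, leaving $\int_s^\infty(F_j(u(\sigma))-F_j(\omega))\,d\sigma$. Your claim that $\|u_j(\sigma)\|_{\mathcal H}^2-\|\omega_j\|_{\mathcal H}^2$ is ``rapidly oscillating'' presupposes that $r$ itself carries a specific oscillatory structure (so that $\RE\langle v_1,r_1\rangle$ oscillates at frequency $\sim\sigma$), but nothing in your fixed-point space enforces this; for arbitrary $r$ in the ball that inner product is just $\O_R(1/\sigma)$. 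The gauge change with $\Phi_j$ has the same circularity: the bound $\Phi_1-\Phi_2=\O_R(1/s)$ is exactly the integrated estimate of Proposition~\ref{prop:u1}, whose proof (Section~\ref{sec:step2}) relies on the antidiagonal commutator mechanism of Lemmas~\ref{lem:offdiag} and~\ref{lem:dPibis}, not on an expansion $u=v+r$, and is not available a~priori inside your iteration. To make the scheme close you would have to either enrich the profile with its $1/s$ corrector (as in the proof of Proposition~\ref{prop:scatv}) and then reproduce the machinery of Section~\ref{sec:step2} inside the contraction map, or run the fixed point jointly on $(r,\Phi_1-\Phi_2)$ with a norm that encodes the integrated decay; neither is carried out. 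Your handling of the case $F_1=F_2$ and of the final global-phase adjustment is correct and agrees with the paper.
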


\begin{remark} A similar result holds as  $s\rightarrow -\infty$.
\end{remark}
    \begin{remark}
     It is not clear whether $\phi$ can be different from zero. It may
     be seen as a second order influence of the 
     nonlinear term (recall that $\phi=0$ if $F_1=F_2$), the leading
     order term (in $s$) being the oscillatory factor ${\rm e}^{i\delta
      F_j(\omega)s}$. Thus, if $\phi$ is not trivial, it should be
    considered as a function of $\omega$ and $\delta(F_1-F_2)$; the proof of
    Proposition~\ref{prop:waveop} will show why $F_1-F_2$ is involved,
    and not more generally $(F_1,F_2)$. 
   \end{remark}
We can therefore define a scattering operator, which may depend on
$\phi$. Given $\alpha \in {\mathcal H}^2$ and a suitable $\phi$,
Proposition~\ref{prop:waveop} yields a solution $u$ to
\eqref{eq:NLLZ2}, and Theorem~\ref{theo:scat} provides an asymptotic
state $\omega \in {\mathcal H}^2$. The scattering operator maps
$\alpha$ to $\omega$: $\omega = S_\delta^\phi(\alpha)$. 
\smallbreak

Our final result concerning the large time behavior of solutions to
\eqref{eq:NLLZ2} consists in describing the effect of the nonlinearity
in $S_\delta^\phi$ by comparing this operator with its linear counterpart. 
We denote by 
$u^{\rm lin}$ the solution of~\eqref{eq:LZ2}, 
and we associate  linear scattering states with $u_0$ denoted by 
$$\alpha^{\rm lin}=\left(\alpha_{1}^{\rm lin},\alpha_{2}^{\rm
    lin}\right)\;\;{\rm and } \;\;\omega^{\rm
  lin}=\left(\omega_{1}^{\rm lin},\omega_{2}^{\rm lin}\right).$$ 
According to Proposition~7 in~\cite{FG03}, the linear scattering
operator is given by  
    \begin {equation}\label{def:Slin}
    S^{\rm lin}=\begin{pmatrix}a(GG^*) & -\ol b(GG^*)G \\
    b(G^*G)G^* & a(G^*G)\end{pmatrix},
    \end{equation}
    with 
    $$\displaylines{
    a(\lambda)={\rm
      e}^{-\pi\lambda/2},\;\;b(\lambda)=\frac{2i{\rm
        e}^{i\pi/4}}{\lambda \sqrt\pi}2^{-i\lambda/2}{\rm 
e}^{-\pi\lambda/4}\, \Gamma\(1+i{\frac{\lambda}{2}}\) \, \sinh
\({\frac{\pi \lambda}{ 2}}\),\cr 
{\rm and}\;\;a(\lambda)^2+\lambda|b(\lambda)|^2=1.\cr}$$
When $G=z$, the 
 coefficient 
 \begin{equation}\label{LZcoeft}
 T(z)=a(z^2)^2={\rm e}^{-\pi z^2}
 \end{equation}
  is the 
 Landau-Zener transition coefficient which describes the ratio
 ${|\omega^{\rm lin}_1|^2}/ |\alpha^{\rm lin}_1|^2$ of the energy which
 remains on the first component (when $\alpha^{\rm lin}_2=0$). As we shall
 see in the next result, the Landau-Zener transition probability remains
 relevant in the nonlinear regime and for small~$\delta$. 
\begin{theorem}\label{theo:dev}
As  $\delta$ goes to zero, we have the uniform estimate
\begin{equation*}
  S_\delta^\phi = S^{\rm lin} +\O_R(\delta).
\end{equation*}
In particular, at leading order, $S_\delta^\phi$ does not depend on
$\phi$: $\phi=\O_R(\delta)$. \\
If $F_1=F_2=\underline F$, we choose $\phi=0$, and we have the exact formula:
\begin{equation}\label{eq:scatexpl}
  S_\delta (\alpha) ={\rm e}^{i\delta \Lambda^+}S^{\rm lin} \({\rm
  e}^{i\delta\Lambda^-} \alpha\),
\end{equation}
where
\begin{equation*}
 \Lambda^- = \int_{-\infty}^0 \left(\underline F\( u^{\rm
    lin}(\tau)\)-\underline F\(\alpha^{\rm lin}\)\right)d\tau,\quad
 \Lambda^+ = \int_0^{+\infty} \left(\underline F\( u^{\rm
    lin}(\tau)\)-\underline F\(\omega^{\rm lin}\)\right)d\tau. 
\end{equation*}
In particular,
$$\left\|S_{\delta}-S^{\rm lin}-i \delta\, \left(\Lambda^+ +
    \Lambda^-\right)S^{\rm lin}\right\|_{{\mathcal L}({\mathcal
    H}\times{\mathcal H})}=\O_R\( \delta^2\).$$ 
\end{theorem}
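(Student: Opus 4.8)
\emph{Strategy.} The plan is to treat separately the diagonal case $F_1=F_2=\underline F$, where the nonlinearity is scalar and can be gauged away \emph{exactly}, and the general case, where a double gauge transform turns~\eqref{eq:NLLZ2} into a linear equation that is an $\O_R(\delta)$ perturbation of~\eqref{eq:LZ2}. In both arguments one combines this reduction with the asymptotics of Theorem~\ref{theo:scat} and Proposition~\ref{prop:waveop} and the explicit operator~\eqref{def:Slin}, and uses $u=\Theta_R u$ (Lemma~\ref{lem:localisation}), $\|u_j(s)\|_{\mathcal H}\le1$, and $\|Gu_2(s)\|_{\mathcal H}\le R$, the last one from $\|G\,\theta(G^*G/R^2)\|\le R$ as in the discussion after Lemma~\ref{lem:existence}.

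\emph{The exact formula.} Fix $\alpha=\Theta_R\alpha$ and let $u$ be the solution attached to $\alpha$ by the $s\to-\infty$ analogue of Proposition~\ref{prop:waveop}, with $\phi=0$ (legitimate since $F_1=F_2$). Put $v(s)={\rm e}^{-i\delta\int_0^s\underline F(u(\tau))\,d\tau}u(s)$. Since $\underline F$ is real-valued and scalar, $\|v_j(s)\|_{\mathcal H}=\|u_j(s)\|_{\mathcal H}$, hence $\underline F(v(s))=\underline F(u(s))$, and a direct computation (a scalar commutes with $V(s)$) shows that $v$ solves the \emph{linear} equation~\eqref{eq:LZ2}; thus $u={\rm e}^{i\delta\Phi(s)}v$ with $\Phi(s)=\int_0^s\underline F(v(\tau))\,d\tau$ depending on the linear flow alone. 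As the linear wave operator commutes with a global scalar phase, $\underline F(v(\tau))$ is insensitive to rotating the $s\to-\infty$ state of $v$ by such a phase; comparing the two behaviours as $s\to-\infty$ then forces $v$ to coincide, up to a global phase, with $v^\alpha$, the linear solution whose $s\to-\infty$ state is $\alpha$ (so whose $s\to+\infty$ state is $S^{\rm lin}\alpha$). Feeding in the linear asymptotics of Remark~\ref{rem:scatlin}, in the sharper form where the $\O(1/|s|)$ corrections oscillate at the conjugate frequency — so that $\underline F(v^\alpha(\tau))-\underline F(\alpha)$ and $\underline F(v^\alpha(\tau))-\underline F(S^{\rm lin}\alpha)$ are integrable near $-\infty$ and $+\infty$ with $\O_R(1/|s|)$ tails, a fact one can also extract from $\frac{d}{ds}\|v_1^\alpha\|_{\mathcal H}^2=-2\,\IM\langle Gv_2^\alpha,v_1^\alpha\rangle$ and a nonstationary-phase estimate — one gets
\[
\Phi(s)+\Lambda^-=\underline F(\alpha)\,s+\O_R(1/|s|)\quad(s\to-\infty),\qquad
\Phi(s)+\Lambda^-=\underline F(S^{\rm lin}\alpha)\,s+(\Lambda^++\Lambda^-)+\O_R(1/s)\quad(s\to+\infty),
\]
where $\Lambda^\pm\in\R$, $|\Lambda^\pm|\le C_R$, and they coincide with those of the statement since $\underline F(\alpha)=\underline F(\alpha^{\rm lin})$ and $\underline F(S^{\rm lin}\alpha)=\underline F(\omega^{\rm lin})$.

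\emph{Identification and expansion.} Writing $u={\rm e}^{i\delta(\Phi+\Lambda^-)}v^\alpha$ (the global phase ${\rm e}^{i\delta\Lambda^-}$ being the one the $s\to-\infty$ matching demands) and inserting $v^\alpha_1(s)={\rm e}^{i\varphi(s,GG^*)}\alpha_1+\O_R(1/|s|)$ for $s\to-\infty$ and $v^\alpha_1(s)={\rm e}^{i\varphi(s,GG^*)}(S^{\rm lin}\alpha)_1+\O_R(1/s)$ for $s\to+\infty$ (likewise for the second component), the $s\to-\infty$ expansion reproduces the Proposition~\ref{prop:waveop} description of $u$ with state $\alpha$ and $\phi=0$, while the $s\to+\infty$ expansion, compared with Theorem~\ref{theo:scat}, gives $F_j(\omega)=F_j(S^{\rm lin}\alpha)$ and $\omega_j={\rm e}^{i\delta(\Lambda^++\Lambda^-)}(S^{\rm lin}\alpha)_j$, i.e. $S_\delta(\alpha)={\rm e}^{i\delta\Lambda^+}S^{\rm lin}({\rm e}^{i\delta\Lambda^-}\alpha)$, which is~\eqref{eq:scatexpl}. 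Finally ${\rm e}^{i\delta\Lambda^\pm}=1+i\delta\Lambda^\pm+\O_R(\delta^2)$ uniformly (as $|\Lambda^\pm|\le C_R$), and since $S^{\rm lin}$ is unitary this yields $S_\delta=S^{\rm lin}+i\delta(\Lambda^++\Lambda^-)S^{\rm lin}+\O_R(\delta^2)$, the last displayed estimate; in particular $S_\delta=S^{\rm lin}+\O_R(\delta)$ in this case.

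\emph{The general estimate and the main obstacle.} For arbitrary $F$, apply to the solution $u$ of Proposition~\ref{prop:waveop} first the gauge transform $v_j={\rm e}^{-i\delta\int_0^sF_j(u)\,d\tau}u_j$, which turns~\eqref{eq:NLLZ2} into a linear equation whose off-diagonal block is multiplied by ${\rm e}^{\pm i\mu(s)}$, with $\mu(s)=\delta\int_0^s(F_2-F_1)(u(\tau))\,d\tau$; a second diagonal gauge transform, ${\rm e}^{\mp i\mu(s)/2}$ on the two components, removes this factor (the block $G$ being unchanged, $\mu$ is scalar) at the cost of replacing $\pm s$ on the diagonal by $\pm(s-\eta(s))$, where $\eta(s)=\tfrac\delta2(F_2-F_1)(u(s))$. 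From $f_j\in{\mathcal C}^2$, $\|u_j\|_{\mathcal H}^2\in[0,1]$, $\frac{d}{ds}\|u_j(s)\|_{\mathcal H}^2=\mp2\,\IM\langle Gu_2,u_1\rangle$ and $\|Gu_2\|_{\mathcal H}\le R$, one finds $|\eta(s)|+|\eta'(s)|\le C_R\delta$ for all $s$, and that $\eta(s)$ tends to finite limits $\eta_\pm$ as $s\to\pm\infty$ with integrable remainders. So the transformed equation is, near $\pm\infty$, an $\O_R(\delta)$ integrable perturbation of~\eqref{eq:LZ2} translated by $\mp\eta_\pm$ in the time variable; absorbing those two constant shifts generates exactly the extra oscillations ${\rm e}^{i\delta F_j(\cdot)s}$ and the constant phase $\phi$ already built into Theorem~\ref{theo:scat} and Proposition~\ref{prop:waveop}, and comparing the linear scattering data with and without the $\eta$-perturbation, in the weighted topology used to construct them, gives $S_\delta^\phi=S^{\rm lin}+\O_R(\delta)$; in particular $\phi$, governed entirely by the $\O_R(\delta)$ quantity $\eta$, is $\O_R(\delta)$. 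The main obstacle is that $\eta$ does not decay at infinity, so this last comparison is genuinely of long-range type: one must first peel off the non-vanishing limits $\eta_\pm$, whose effect is precisely the modified oscillations and the phase $\phi$, before a Gronwall/fixed-point estimate can be applied to the integrable remainder; the same care — the $\O(1/|s|)$ corrections oscillating at the conjugate frequency — is what makes the integrals defining $\Lambda^\pm$ converge and remain $\O_R(1)$, and it is implicit in the oscillatory-integral proof of the linear result.
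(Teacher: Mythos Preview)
Your treatment of the diagonal case $F_1=F_2=\underline F$ is correct and matches the paper's argument (Section~6.1): the scalar gauge reduces to the linear flow, $\underline F(u)=\underline F(u^{\rm lin})$ because the gauge is unimodular, and reading off the two asymptotics gives \eqref{eq:scatexpl}; the $\O_R(\delta^2)$ expansion then follows from $|\Lambda^\pm|\le C_R$.

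In the general case your reduction is equivalent to the paper's. Your double gauge $w_j={\rm e}^{\mp i\mu/2}{\rm e}^{-i\delta\int_0^sF_j(u)}u_j$ composes to ${\rm e}^{-i\int_0^sM(u)}u$, which is exactly the function $v$ of \eqref{eq:v}; your $\eta(s)$ is $-m(u(s))$, and you land on $-i\partial_s w=V(s+m(u(s)))w$, which is the paper's equation for $v$. The strategy of comparing with the shifted linear solutions $v^\pm(s)=v^{\rm lin}(s+m(\alpha))$, $v^{\rm lin}(s+m(\omega))$ is also the paper's.

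The gap is in the sentence ``an $\O_R(\delta)$ integrable perturbation \dots\ before a Gronwall/fixed-point estimate can be applied to the integrable remainder.'' After peeling off the constant limits $\eta_\pm=-m(\alpha),-m(\omega)$, the remaining source in the equation for $v-v^\pm$ is $(m(v(s))-m(\alpha))Jv$, and by Proposition~\ref{prop:u1} this is only $\O_R(\delta/|s|)$, \emph{not} absolutely integrable. A direct energy/Gronwall estimate from $s=-\infty$ to $s=-1$ then produces $C_R\delta\log|s|$, which diverges. The paper resolves this with an explicit corrector (Lemma~6.1): one writes the $\O_R(\delta/|s|)$ source as a sum of terms of the form $s^{-1}{\rm e}^{\pm 3is^2/2}\kappa(s)$ and $s^{-1}{\rm e}^{\mp is^2/2}\tilde\kappa(s)$ with $\kappa,\tilde\kappa=\O_R(\delta)$, and chooses $\kappa^-(s)=\O_R(\delta s^{-2})$ so that
\[
\frac{1}{i}\partial_s\bigl(v-v^-+\kappa^-\bigr)=V(s+m(\alpha))\bigl(v-v^-+\kappa^-\bigr)+\O_R(\delta s^{-2}),
\]
after which the energy estimate closes. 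This integration-by-parts/corrector step is the technical heart of the general case and is not supplied in your proposal; the oscillation you invoke is indeed what makes it work, but it must be implemented at the level of the equation for $v-v^\pm$, not merely noted as a heuristic.
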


As expected, as $\delta \to 0$, $S_\delta^\phi$ behaves at leading order
like the linear scattering operator. Nonlinear effects show up in the
$\O(\delta)$ corrector that can be computed explicitly in the case
$F_1=F_2$
 (the nonlinearity is present in the definition
of $\Lambda^-$ and $\Lambda^+$).

This paper is organized as follows. In Section~\ref{sec:deriv}, we
sketch the derivation of models of the form \eqref{eq:NLLZ2} from cubic
nonlinear Schr\"odinger equations used to describe Bose--Einstein
Condensation. In Section~\ref{sec:prelim}, we  present an algebraic
reduction which we use to recast Theorem~\ref{theo:scat} in terms of another
unknown function (Proposition~\ref{prop:scatv}), and 
we set up some technical
tools needed for the large time study of
\eqref{eq:NLLZ2}. Section~\ref{sec:scat} is dedicated to the proof of
the  
intermediary Proposition~\ref{prop:scatv}.
Proposition~\ref{prop:waveop} is proved in Section ~\ref{sec:waveop}, and
Theorem~\ref{theo:dev} in proved in Section~\ref{sec:dev}. Finally, in 
Appendix~\ref{sec:justif}, we go back to the derivation of \eqref{eq:NLLZ2} from
physical models, and establish rigorously that \eqref{eq:NLLZ2} can be
interpreted as an envelope equation in the semi-classical limit.

\section{Formal derivation of the model}
\label{sec:deriv}

We rapidly describe some cases where \eqref{eq:NLLZ2} appears as an
approximation to describe the motion of a Bose--Einstein condensate. 
\subsection{Condensate in an accelerated optical lattice}
As proposed in \cite{BiQi00}, and considered in e.g. \cite{PRL03},  the
motion of a Bose--Einstein condensate in an accelerated~1D optical
lattice is described by the equation
\begin{equation*}
  i\hbar \frac{\d \psi}{\d t} = \frac{1}{2m}\(-i \hbar \frac{\d}{\d x}- \omega
  t\)^2 \psi +V_0 \cos\(2k_L x\)\psi + \frac{4\pi\hbar^2 a_s}{m}|\psi|^2\psi,
\end{equation*}
where $m$ is the atomic mass, $k_L$ is the optical lattice wave number,
$V_0$ is the strength of the periodic potential depth, $\omega$ is the
inertial force, and $a_s$ is the scattering length. After rescaling,
this equation reads 
\begin{equation}
   \label{eq:nlsbloch}
  i\frac{\d \psi}{\d t} =\frac{1}{2}\(-i \frac{\d}{\d x} -\alpha
  t\)^2\psi+v\cos(x)\psi + \epsilon |\psi|^2\psi,
\end{equation}
with $\epsilon=-1$ or $+1$ according to the chemical element
considered. The approach in~\cite{BiQi00} consists in substituting the
ansatz
\begin{equation}\label{ansatz1}
  \psi(t,x) = a(t) {\rm e}^{ikx} + b(t){\rm e}^{i(k-1)x}
\end{equation}
into \eqref{eq:nlsbloch}, with $k=k_L=1/2$, corresponding to the Brillouin
zone edge (this approximation amounts to considering that only the
ground state and the first excited state are populated, see
\cite{PRL03}). We compute 
\begin{align*}
  i\d_t \psi &= i\dot a {\rm e}^{ikx}+i\dot b {\rm e}^{i(k-1)x},\\
 \(-i\d_x -\alpha t\)^2\psi &= \(k-\alpha t\)^2 a {\rm e}^{ikx} +
\(k-1-\alpha t\)^2  b {\rm e}^{i(k-1)x},\\
\cos(x)\psi &= \frac{1}{2}\( a {\rm e}^{i(k+1)x} +b  {\rm e}^{ikx}+ a{\rm e}^{i(k-1)x} +b
  {\rm e}^{i(k-2)x}\), \\
|\psi|^2\psi&= \(|a|^2+|b|^2\)\(a {\rm e}^{ikx} + b {\rm e}^{i(k-1)x}\)
+|a|^2 b {\rm e}^{i(k-1)x} + a|b|^2
{\rm e}^{ikx}\\
&\quad +a^2\overline b {\rm e}^{i(k+1)x} +\overline a b^2 {\rm e}^{i(k-2)x}. 
\end{align*}
Leaving out the new harmonics (the last two exponentials) generated
both by band interaction and nonlinear effects, and identifying the
coefficients of 
${\rm e}^{ikx}$ and ${\rm e}^{i(k-1)x}$, we come up with:
\begin{equation*}
  \left\{
    \begin{aligned}
      i\d_t a& = \frac{1}{2}\(k-\alpha t\)^2a + \frac{v}{2}b +
      \epsilon \(|a|^2+2|b|^2\)a,\\
i\d_t b& = \frac{1}{2}\(k-1-\alpha t\)^2 b + \frac{v}{2}a +
      \epsilon \(2|a|^2+|b|^2\)b.
    \end{aligned}
\right.
\end{equation*}
We notice the identity $\d_t\(|a|^2+|b|^2\)=0$, so we can write $
|a|^2+|b|^2 =m_0^2>0$ (in \cite{BiQi00}, $m_0=1$).  
Now recalling the numerical value $k=1/2$ and expanding the
squares, we have
\begin{equation*}
  \left\{
    \begin{aligned}
      i\d_t a& = \frac{1}{8}a -\frac{\alpha t}{2} a +\frac{(\alpha
        t)^2}{2}a+ \frac{v}{2}b +
      \epsilon \(m_0^2+|b|^2\)a,\\
i\d_t b& = \frac{1}{8}b +\frac{\alpha t}{2}b + \frac{(\alpha
        t)^2}{2}b + \frac{v}{2}a +
      \epsilon \(m_0^2+|a|^2\)b.
    \end{aligned}
\right.
\end{equation*}
The above system becomes
\begin{equation*}
  i\d_t 
  \begin{pmatrix}
    a\\
b
  \end{pmatrix}
 = \(E_0+\frac{(\alpha
        t)^2}{2}\)
 \begin{pmatrix}
    a\\
b
  \end{pmatrix}
+\frac{1}{2}
\begin{pmatrix}
-\alpha t & v \\ v & \alpha t
\end{pmatrix}
\begin{pmatrix}
    a\\
b
  \end{pmatrix}
+\frac{\epsilon}{2}\, {\rm diag}\( |b|^2,|a|^2\) 
\begin{pmatrix}
    a\\
b
  \end{pmatrix},
\end{equation*}
with
\begin{equation*}
  E_0 = \frac{1}{8} +m_0^2 \epsilon . 
\end{equation*}
Using finally the gauge transform
\begin{equation*}
  \begin{pmatrix}
    u_1\\
u_2
  \end{pmatrix}
=
{\rm e}^{-iE_0t -i\alpha^2 t^3/6}\begin{pmatrix}
    a\\
b
  \end{pmatrix},
\end{equation*}
we end up with
\begin{equation*}
  i\d_t 
  \begin{pmatrix}
    u_1\\
u_2
  \end{pmatrix}
 = \frac{1}{2}
 \begin{pmatrix}
-\alpha t & v \\ v & \alpha t
\end{pmatrix}
\begin{pmatrix}
    u_1\\
u_2
  \end{pmatrix}
+\frac{\epsilon}{2}\,{\rm diag}( |u_2|^2,|u_1|^2) 
\begin{pmatrix}
    u_1\\
u_2
  \end{pmatrix},
\end{equation*}
which is of the form
\eqref{eq:modelphys1}--\eqref{eq:modelphys2} or~\eqref{eq:NLLZ2} via the change of variable $s=t\sqrt{\alpha\over 2} $; we then have $G=(2\alpha)^{-1/2}v$ and $F(u)=\eps(2\alpha)^{-1/2}{\rm diag}(|u_2|^2,|u_1|^2)$. Note that the only
approximation that we have made in this computation consists in
neglecting the other harmonics than ${\rm e}^{\pm ix/2}$, and no
linearization was performed, in contrast to the computations of
\cite{BiQi00}. 
\smallbreak
Theorem~\ref{theo:scat} gives  asymptotics for the profiles $a$ and
$b$ of the ansatz~\eqref{ansatz1} as $t$ goes to
$\pm\infty$. Theorem~\ref{theo:dev} gives an information on the
profiles $(a_+,b_+)$ for  $t\sim +\infty$ in terms of $(a_-,b_-)$,
those for $t\sim -\infty$. For example if for $t\sim-\infty$, we have
$(a_-,b_-)=(a,0)$, then the profiles for $t\sim +\infty$ are related
via the Landau-Zener transition coefficient~\eqref{LZcoeft} for
$z=(2\alpha)^{-1/2}v$: at leading order in $\delta$, they satisfy 
$$|a_+|^2={\rm e}^{-\pi v^2/(2\alpha)}|a|^2 \;\;{\rm and}\;\;
|b_+|^2=\left(1-{\rm e}^{-\pi v^2/( 2 \alpha)}\right)|a|^2.$$

\subsection{Condensate in a double-well potential}

As suggested in \cite{BiQi00}, and further developed in \cite{Kh10},
nonlinear Landau--Zener tunneling may be realized in a double-well
potential. We present a derivation which is
different from the one presented in the above references, even on a
former level.  Rigorous details are presented in the appendix. Consider
\begin{equation}
  \label{eq:nlswell}
  i \frac{\d \psi}{\d t} +\frac{1}{2}\frac{\d^2 \psi}{\d x^2}  =
  W(t,x) \psi +\epsilon |\psi|^2\psi.
\end{equation}
The potential $W$ is of the form
\begin{equation}\label{eq:double-well-V}
  W(t,x) = V_s(x)+\kappa t V_a(x),
\end{equation}
where $V_s$ is a symmetric double-well potential and
$V_a$ is antisymmetric.  
The main point to be aware
of is that the lowest two 
eigenvalues $\lambda_+<\lambda_-$ of the Hamiltonian
$-\frac{1}{2} \d_x^2 + V_s$ are non-degenerate,  with associated
eigenfunctions $\varphi_\pm$ (see Appendix~\ref{sec:justif} for details). The two exponential functions ${\rm e}^{\pm i
  x/2}$ of the above model are then replaced by the so-called
single-well states
\begin{equation}\label{eq:double-well-single}
  \varphi_L = \frac{1}{\sqrt 2}\(\varphi_+-\varphi_-\),\quad
  \varphi_R = \frac{1}{\sqrt 2}\(\varphi_++\varphi_-\). 
\end{equation}
We note that this approach can be generalized to a multidimensional
framework, in the spirit of \cite{Sa05} (see
Appendix~\ref{sec:justif}). We sketch the 
computation in the simplest~1D case though, to emphasize the
differences with the optical lattice case. 
We shall use mostly two properties of $\varphi_L$ and $\varphi_R$,
described more precisely in Appendix~\ref{sec:justif}:
\begin{itemize}
\item $\varphi_R(-x) = \varphi_L(x)$.
\item The product $\varphi_L\varphi_R$ is negligible, because $\varphi_L$ and
  $\varphi_R$ are localized at the 
two distinct minima of $V_s$. 
\end{itemize}
Seek $\psi$ of the form
\begin{equation*}
  \psi(t,x) = a_L(t)\varphi_L(x) + a_R(t)\varphi_R(x).
\end{equation*}
Denoting
\begin{equation*}
  \Omega = \frac{\lambda_-+\lambda_+}{2},\quad \omega =
  \frac{\lambda_--\lambda_+}{2}, 
\end{equation*}
we compute:
\begin{align*}
  i\d_t \psi& = i\dot a_L \varphi_L + i\dot a_R \varphi_R,\\
-\frac{1}{2}\d_x^2 \psi+V_s \psi& =  a_L\(\Omega
\varphi_L-\omega\varphi_R\) + 
a_R\(\Omega \varphi_R- \omega \varphi_L\)\\ 
&  = (\Omega a_L-\omega a_R)\varphi_L+(\Omega_R-\omega a_L)\varphi_R,\\ 
V_a \psi & = a_L V_a \varphi_L + a_R V_a \varphi_R,\\
|\psi|^2\psi & = \(|a_L|^2 |\varphi_L|^2 + |a_R|^2
|\varphi_R|^2+2\RE\(\overline a_L a_R\overline
\varphi_L\varphi_R\)\)\(a_L\varphi_L+a_R\varphi_R\). 
\end{align*}
By
integrating in space and neglecting the product $\varphi_L\varphi_R$, we get:
\begin{equation*}
\left\{
  \begin{aligned}
    i\d_t a_L& = \Omega a_L-\omega a_R +\kappa t \gamma_L a_L +
    \epsilon_L|a_L|^2
    a_L,\\
i\d_t a_R& = \Omega a_R- \omega a_L +\kappa t \gamma_R a_R +
    \epsilon_R|a_R|^2
    a_R,
  \end{aligned}
\right.
\end{equation*}
with
\begin{equation*}
  \gamma_L = \int_\R V_a \varphi_L^2,\quad \epsilon_L=\epsilon\int_\R
  \varphi_L^4,
 \quad\gamma_R = \int_\R V_a \varphi_R^2,\quad \epsilon_R=\epsilon\int_\R
  \varphi_R^4.
\end{equation*}
By symmetry, $\gamma_L = -\gamma_R$, $\epsilon_L=\epsilon_R=:\delta$, so if we
set $\alpha = \kappa \gamma_L$, we come up with:
\begin{equation*}
\left\{
  \begin{aligned}
    i\d_t a_L& =  \Omega a_L-\omega a_R +\alpha t  a_L +
    \delta |a_L|^2
    a_L,\\
i\d_t a_R& =  \Omega a_R- \omega a_L-\alpha t  a_R +
    \delta|a_R|^2
    a_R.
  \end{aligned}
\right.
\end{equation*}
Using  the gauge transform
\begin{equation*}
  \begin{pmatrix}
    u_1\\
u_2
  \end{pmatrix}
=
\begin{pmatrix}
    a_L {\rm e}^{i\Omega t}\\
a_R {\rm e}^{i\Omega t}
  \end{pmatrix},
\end{equation*}
we find:
\begin{equation*}
 i\d_t 
  \begin{pmatrix}
    u_1\\
u_2
  \end{pmatrix}
 = 
 \begin{pmatrix}
   \alpha t & -\omega \\
-\omega &-\alpha t
 \end{pmatrix}
\begin{pmatrix}
    u_1\\
u_2
  \end{pmatrix}
+\delta
\begin{pmatrix}
 |u_1|^2 & 0 \\
0 &|u_2|^2 
\end{pmatrix}
\begin{pmatrix}
    u_1\\
u_2
  \end{pmatrix}  ,
\end{equation*}
which is  of the form \eqref{eq:NLLZ2} via the change of variables
$s=\sqrt\alpha t$ with $F_j(u)=\alpha^{-1/2}|u_j|^2$ and
$G=-\omega\alpha^{-1/2}$. Therefore, Theorems~\ref{theo:scat}
and~\ref{theo:dev} yield similar results as in the preceding
subsection with a Landau-Zener coefficient ${\rm
  e}^{-\pi \omega^2/ \alpha}$.

\section{Preparation of the proof of Theorem~\ref{theo:scat}}
\label{sec:prelim}
  
\subsection{A useful reduction}
\label{sec:reduc}

For $u\in {\mathcal H}^2$, set
\begin{equation}\label{def:Mm}
  M(u) = {\delta\over 2} \left(F_1(u)+F_2(u)\right)\quad ;\quad 
m(u) = {\delta\over 2} \left(F_1(u)-F_2(u)\right), 
\end{equation}
where $M$ and $m$ are real-valued, and rewrite \eqref{eq:NLLZ2} as
\begin{equation}\label{syst:v}
  -i\d_s u = V(s)u +M(u)u +{\rm diag}\(m(u),-m(u)\)u = V\(s+m(u)\)u
  +M(u)u. 
\end{equation}
Since $M$ is a scalar, the last term can be absorbed by a gauge
transform:
\begin{equation}\label{eq:v}
v(s):= u(s){\rm e}^{-i\int_0^s M(u(\tau))d\tau}
\end{equation}
satisfies $\|v_j(s)\|_{\mathcal H}=\|u_j(s)\|_{\mathcal H}$ for $j\in\{1,2\}$ and
\begin{equation*}
 -i\d_s v = V\(s+m(u(s))\)v= \widetilde
 V\(s\)v, 
\end{equation*}
where we have introduced the notation $\widetilde V(s) =
V\(s+m(u(s))\)$.

\begin{remark}\label{rem:scatF1=F2}
Note  that if $F_1=F_2$, we have~$M=F_1$ and $m=0$ in~(\ref{def:Mm}); therefore, we are reduced to the standard linear equation 
\begin{equation*}
 -i\d_s v = V\(s\)v,
\end{equation*}
for which the scattering result is known, describing the large time
behavior of the solution $v$ at leading order (see  Lemma~7 in \cite{FG02}
or equivalently Lemma~11 in \cite{FG03}). In Proposition~5.5
of~\cite{FL08}, the remainder term is proved to be of order $\O(R^2s^{-1})$. 
 \end{remark} 
 
  In view of the physical models presented in Section~\ref{sec:deriv},
  we are mainly concerned with the situation where $F_1\not= F_2$, which
  is more involved mathematically.  
The first step of the large time study consists in proving the
following proposition.

\begin{proposition} \label{prop:u1} Let $R>0$. For any data $u_0$ such
  that  $\Theta_R u_0=u_0$, there exist two vectors of $(\R^+)^2 $,
  $\Omega=(\Omega_1,\Omega_2)$ and $A=(A_1,A_2)$ such that  and for  
$j\in\{1,2\}$,
\begin{align*}
&\| u_j(s)\|^2_{\mathcal H} = \Omega_j+ \O_R\(\frac{1}{s}\) \text{ as
}s\to +\infty,\\
& \| u_j(s)\|^2_{\mathcal H} =
A_j+\O_R\(\frac{1}{|s|}\) \text{ as
}s\to -\infty.
 \end{align*}
Moreover, 
  \begin{align*}
    &\int_s^{+\infty}
 \left(\| u_j(\tau)\|^2_{\mathcal H}-\Omega_j\right)d\tau=\O_R\(\frac{1}{s}\) \text{
   as }s\to +\infty,\\
 &\int_{-\infty}^s 
 \left(\| u_j(\tau)\|^2_{\mathcal H} -A_j\right)d\tau=\O_R\(\frac{1}{|s|}\)
\text{
   as }s\to -\infty.
 \end{align*}
\end{proposition}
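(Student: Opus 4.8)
The plan is to reduce the whole statement to a scalar, one–dimensional non‑stationary phase analysis for the off‑diagonal quantity
\begin{equation*}
  w(s):=\langle Gu_2(s),u_1(s)\rangle\in\C .
\end{equation*}
The key preliminary remark is that the nonlinearity does not act on the moments: differentiating $\|u_1(s)\|_{\mathcal H}^2$ along \eqref{eq:NLLZ2}, the diagonal contributions $s\|u_j\|^2$ and $\delta F_j(u)\|u_j\|^2$ are real, hence disappear after multiplication by $i$, and what is left is
\begin{equation*}
  \frac{d}{ds}\|u_1(s)\|_{\mathcal H}^2=-2\,\IM w(s)=-\frac{d}{ds}\|u_2(s)\|_{\mathcal H}^2 .
\end{equation*}
So the proposition is entirely about $\int\IM w$ and its iterated integral. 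Next I would write down a closed scalar ODE for $w$: differentiating along \eqref{eq:NLLZ2}, using that $G$ is the only off‑diagonal entry together with the adjoint/commutation relations \eqref{commutation}, one finds, for $|s|$ large,
\begin{equation*}
  w'(s)+i\bigl(2s-\mu(s)\bigr)w(s)=iP(s),\qquad \mu(s)=\delta\bigl(F_2(u(s))-F_1(u(s))\bigr),
\end{equation*}
with $P(s)=\langle GG^*u_1(s),u_1(s)\rangle-\langle G^*Gu_2(s),u_2(s)\rangle$ \emph{real}. Since $u_0=\Theta_Ru_0$, Lemma~\ref{lem:localisation} gives $u(s)=\Theta_Ru(s)$, so $GG^*$ and $G^*G$ behave like bounded operators of norm $\le R^2$ on the relevant components (here \eqref{commutation} is used again, to know that $Gu_2$ stays in the range of $\theta(GG^*/R^2)$); combined with the normalization \eqref{eq:conserv} and the boundedness of $\nabla^2f_j$, this yields $P=\O_R(1)$, $\mu=\O_R(1)$, $\mu'=\O_R(1)$, and, differentiating once more, $P'(s)=4\,\IM\langle GG^*u_1(s),Gu_2(s)\rangle=\O_R(1)$.

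Then I would use non‑stationary phase. Set $\Phi(s)=\int_0^s(2\tau-\mu(\tau))\,d\tau$, so $\Phi'(s)=2s-\mu(s)$ with $|\Phi'(s)|\sim 2|s|$. Multiplying by $e^{i\Phi}$ and integrating by parts once gives the decomposition
\begin{equation*}
  w(s)=\frac{P(s)}{\Phi'(s)}+e^{-i\Phi(s)}\zeta_1(s),\qquad \zeta_1'(s)=-\Bigl(\frac{P'(s)}{\Phi'(s)}-\frac{P(s)\Phi''(s)}{\Phi'(s)^2}\Bigr)e^{i\Phi(s)},
\end{equation*}
where $\zeta_1$ is bounded (because $|w|\le R$) and $\zeta_1'=\O_R(1/|s|)$. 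As $P/\Phi'$ is real, $\IM w(s)=\IM\bigl(e^{-i\Phi(s)}\zeta_1(s)\bigr)$: the derivative of $\|u_1\|^2$ is purely oscillatory. One more integration by parts in $\tau$ (justified since $|\Phi'|\gtrsim\tau$), together with $\zeta_1'=\O_R(1/\tau)$, gives $\int_s^{+\infty}\IM w(\tau)\,d\tau=\O_R(1/s)$. Hence $\Omega_j:=\lim_{s\to+\infty}\|u_j(s)\|_{\mathcal H}^2$ exists with $\|u_j(s)\|_{\mathcal H}^2-\Omega_j=\O_R(1/s)$; running the argument backwards gives $A_j$ and the corresponding estimate as $s\to-\infty$.

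The hard part will be the integrated remainder $\int_s^{+\infty}\bigl(\|u_j(\tau)\|_{\mathcal H}^2-\Omega_j\bigr)\,d\tau=\O_R(1/s)$. Writing $\|u_1(\tau)\|_{\mathcal H}^2-\Omega_1=2\,\IM\int_\tau^{+\infty}e^{-i\Phi}\zeta_1\,d\sigma$ and integrating once more in $\tau$ produces iterated oscillatory integrals whose amplitudes — after reversing the order of integration — carry factors $(\sigma-s)$ as well as derivatives of $\zeta_1$, $\mu$ and $P$ that grow polynomially in $\sigma$, so a naive repeated integration by parts never terminates. What rescues the estimate is that these growing amplitudes always come multiplied by a matching oscillatory factor $e^{\pm i\Phi}$: applying the very same slow/oscillatory splitting to the next off‑diagonal quantity $W(s):=\langle GG^*u_1(s),Gu_2(s)\rangle$, which solves $W'(s)-i\Phi'(s)W(s)=iQ(s)$ with $Q$ real and $\O_R(1)$, yields $W(s)=-Q(s)/\Phi'(s)+e^{i\Phi(s)}\eta_1(s)$ with $\eta_1$ bounded, hence $P''(s)=4\Phi'(s)\,\RE\bigl(e^{i\Phi(s)}\eta_1(s)\bigr)$ is again purely oscillatory — and similarly for the higher derivatives. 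Feeding this back, each integration by parts either genuinely gains a power of $1/\sigma$ from $|\Phi'|\gtrsim\sigma$ or leaves a non‑oscillatory term that is already $\O_R(1/\sigma^2)$, and one concludes $\int_s^{+\infty}\bigl(\|u_j(\tau)\|_{\mathcal H}^2-\Omega_j\bigr)\,d\tau=\O_R(1/s)$, the case $s\to-\infty$ being symmetric. Carrying out this bookkeeping cleanly — absorbing, at every order, the polynomial‑in‑$s$ growth of the successive derivatives of $u$ into the oscillation — is where the real work lies.
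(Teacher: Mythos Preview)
Your approach is genuinely different from the paper's, and the first half of your argument is correct and complete: the scalar ODE for $w=\langle Gu_2,u_1\rangle$, the observation that $P/\Phi'$ is real so that $\IM w$ is purely oscillatory, and one integration by parts do give $\|u_j(s)\|^2-\Omega_j=\O_R(1/s)$.

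For the iterated remainder, however, your sketch does not close. After one integration by parts you are left with a non-oscillatory contribution of the form $\int_\tau^{+\infty}\frac{1}{\Phi'}\bigl(P/\Phi'\bigr)'\,d\sigma$ (which is in fact $\O_R(1/\tau^2)$, so harmless) and an oscillatory contribution involving $\zeta_1\Phi''/\Phi'^2$. To estimate the latter you differentiate once more, and this produces $\Phi'''=-\mu''$. But $\mu''$ contains $\partial_s^2\|u_1\|^2=\O_R(s)$, so $\Phi'''=\O_R(s)$ and the new amplitude is only $\O_R(1/\sigma)$, not $\O_R(1/\sigma^2)$: you have lost exactly the power you gained. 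Your proposed remedy --- observing that $\Phi'''$ is again oscillatory via the auxiliary quantity $W$ --- is correct in spirit, but it regenerates the same structure one level down (now with $Q'$, $\eta_1'$, and eventually $\Phi^{(4)}$), and you have not shown that this hierarchy terminates or can be summed. As written, the argument is an infinite regress.

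The paper avoids this by a more structural route. It passes to the gauged unknown $v$ and works with the adiabatic projectors $\Pi^\pm(s)$ associated to $\widetilde V(s)=V(s+m(u(s)))$. The key algebraic fact is that any \emph{antidiagonal} operator $C(s)$ (i.e.\ $\Pi^\pm C\Pi^\pm=0$) can be written as a commutator $[\,B(s),\widetilde V(s)\,]$ with $B=\tfrac12\Lambda^{-1}C$; this converts $\langle Cv,v\rangle$ into $\tfrac{1}{i}\tfrac{d}{ds}\langle Bv,v\rangle+i\langle\partial_sB\,v,v\rangle$ in one stroke. Applying this first to $C=\Pi^+\partial_s\Pi^+$ gives $\|\Pi^\pm v(s)\|^2\to\Omega^\pm$ at the \emph{faster} rate $\O_R(1/s^2)$; the correction $g_j(s)=\|v_j\|^2-\|\Pi^\pm v\|^2$ is itself of the form $\RE\langle\widetilde C v,v\rangle$ with $\widetilde C$ antidiagonal, and a second application of the same commutator identity, together with the directly verifiable estimate $\partial_s\widetilde B=\O_R(1/s^3)$, gives $\int_s^{+\infty}g_j=\O_R(1/s)$ with no iteration. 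What the projector machinery buys is that the ``gain a power of $1/s$'' step is packaged once and for all in the passage $C\mapsto B=\tfrac12\Lambda^{-1}C$, and the required operator bounds are finite computations rather than an open-ended hierarchy.
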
	
 Therefore, any $\omega\in {\mathcal H}^2$ such that
 $\|\omega_j\|_{\mathcal H}^2= \Omega_j$, $j=1,2$, satisfies
$F_j(\omega) = f_j (\Omega_1,\Omega_2)$ ($f_1$ and $f_2$ are the
functions in~\eqref{def:f}). Similarly, any $\alpha\in {\mathcal H}^2$ such that
 $\|\alpha_j\|_{\mathcal H}^2= A_j$, $j=1,2$, satisfies 
$F_j(\alpha) = f_j (A_1,A_2)$. We
shall however use the notations $\omega,\alpha$ until these vectors
are fully determined, after Proposition~\ref{prop:scatv} below. 
\smallbreak

Note that in view of~(\ref{eq:v}), it is equivalent to state
Proposition~\ref{prop:u1} for $u$ or for $v$. It will
be established by studying $v$ in Section~\ref{sec:scat}. In
particular, this proposition implies that 
the complete description of the function $v$ for large $s$ will induce
direct results on $u$. 
The analysis of the asymptotics of $v(s)$, as $s\rightarrow +\infty$,
is stated in the following proposition, which in turn is established
in  Section~\ref{sec:scat}. 
\begin{proposition}\label{prop:scatv}
There exist $(\nu_1,\nu_2)\in{\mathcal H}^2$ with
$\|\nu_j\|_{\mathcal H}^2=\Omega_j$, $j=1,2$, such that as
$s\rightarrow +\infty$,   
\begin{equation*}
 v_1(s) = {\rm e}^{i \varphi (s+m(\omega),GG^*)} \nu_1+
\O_R\(\frac{1}{s}\);\quad
 v_2(s) = {\rm e}^{-i \varphi (s+m(\omega),G^* G)} \nu_2 +
\O_R\(\frac{1}{s} \), 
\end{equation*}
and $(\mu_1,\mu_2)\in{\mathcal H}^2$ with
$\|\mu_j\|_{\mathcal H}^2=A_j$, $j=1,2$, such that as
$s\rightarrow -\infty$,   
\begin{equation*}
 v_1(s) = {\rm e}^{i \varphi (s+m(\alpha),GG^*)} \mu_1+
\O_R\(\frac{1}{|s|}\);\quad
 v_2(s) = {\rm e}^{-i \varphi (s+m(\alpha),G^* G)} \mu_2 +
\O_R\(\frac{1}{|s|} \). 
\end{equation*}
\end{proposition}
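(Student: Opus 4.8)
The plan is to work with the gauge-transformed unknown $v$ introduced in \eqref{eq:v}, which solves $-i\partial_s v=\widetilde V(s)v$ with $\widetilde V(s)=V(s+m(u(s)))$, and to treat $m(u(s))$ as a known, slowly varying scalar function thanks to Proposition~\ref{prop:u1}. I carry out the argument as $s\to+\infty$; the case $s\to-\infty$ is identical with $\alpha,A_j$ replacing $\omega,\Omega_j$. Fix $\omega\in\mathcal H^2$ with $\|\omega_j\|_{\mathcal H}^2=\Omega_j$. Since $m(u)=\tfrac\delta2(f_1-f_2)(\|u_1\|_{\mathcal H}^2,\|u_2\|_{\mathcal H}^2)$ with $f_j\in\mathcal C^2$ and $\nabla^2 f_j$ bounded, the pointwise part of Proposition~\ref{prop:u1} gives $|m(u(s))-m(\omega)|=\O_R(1/s)$, and, Taylor-expanding $f_1-f_2$ at $(\Omega_1,\Omega_2)$ and using the integral bound of Proposition~\ref{prop:u1} for the linear term together with the pointwise rate for the $\mathcal C^2$ remainder, also $\int_s^{+\infty}(m(u(\tau))-m(\omega))\,d\tau=\O_R(1/s)$. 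In particular $m(u(\cdot))$ is bounded on $\R$, so $\Phi(s):=\int_0^s(\tau+m(u(\tau)))\,d\tau$ has $\Phi'(s)=s+m(u(s))\ge s/2$ for $s$ large and $\Phi(s)=\tfrac{(s+m(\omega))^2}{2}+c+\O_R(1/s)$ for some constant $c=c(\omega)$.

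I would then remove the dynamical oscillation. Setting $g_1(s)=e^{-i\Phi(s)}v_1(s)$ and $g_2(s)=e^{i\Phi(s)}v_2(s)$, the system becomes $g_1'=ie^{-2i\Phi}Gg_2$ and $g_2'=ie^{2i\Phi}G^*g_1$. Because $u_0=\Theta_R u_0$ forces $v(s)=\Theta_R v(s)$ (the phase in \eqref{eq:v} being scalar), one has $g_1=\theta(GG^*/R^2)g_1$ and $g_2=\theta(G^*G/R^2)g_2$, so $Gg_2=G\theta(G^*G/R^2)g_2$ and $G^*g_1=G^*\theta(GG^*/R^2)g_1$ are well defined with $\|G\theta(G^*G/R^2)\|\le R$ and $\|G^*\theta(GG^*/R^2)\|\le R$; together with $\|g_1\|_{\mathcal H}^2+\|g_2\|_{\mathcal H}^2=1$ from \eqref{eq:conserv}, this yields uniform bounds on $g_j$ and $g_j'$. (The identity $\Theta_R V(s)=V(s)\Theta_R$, which follows from \eqref{commutation} and underlies Lemma~\ref{lem:localisation}, is what lets one work within the range of $\Theta_R$, where $G$ and $G^*$ act boundedly; in practice Proposition~\ref{prop:u1}, assumed above, is obtained simultaneously by the same integration-by-parts mechanism below applied to the rapidly oscillating scalar functions $s\mapsto\|g_j(s)\|_{\mathcal H}^2$.)

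The heart of the proof is an integration by parts. Fixing $s_1$ with $\Phi'>0$ on $[s_1,\infty)$ and writing $g_1(s)=g_1(s_1)+i\int_{s_1}^s e^{-2i\Phi}Gg_2\,d\tau$, I integrate by parts using $e^{-2i\Phi}=(-2i\Phi')^{-1}\tfrac{d}{d\tau}e^{-2i\Phi}$; the term obtained by differentiating $Gg_2$ and then substituting $g_2'=ie^{2i\Phi}G^*g_1$ loses its oscillation and yields the \emph{resonant} contribution $\propto\int\tfrac{GG^*g_1}{\Phi'}\,d\tau$, while all remaining terms are oscillatory with $\O_R(1/\tau^2)$ amplitude or boundary terms of size $\O_R(1/\tau)$. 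Thus $g_1'(s)=\tfrac i2\tfrac{GG^*}{\Phi'(s)}g_1(s)+\rho_1(s)$, where the tail integral $R_1(s):=\int_s^{+\infty}\rho_1(\tau)\,d\tau$ is well defined with $\|R_1(s)\|_{\mathcal H}=\O_R(1/s)$. I would then kill the resonance by the substitution $g_1(s)=e^{\frac i2 GG^*\psi(s)}h_1(s)$ with $\psi(s)=\int_{s_1}^s d\tau/\Phi'(\tau)$: then $h_1'=e^{-\frac i2 GG^*\psi}\rho_1$, and a second integration by parts (using $\rho_1=-R_1'$, $\psi'=1/\Phi'$, and the boundedness of $GG^*\theta(GG^*/R^2)$) shows $h_1(s)\to\ell_1\in\mathcal H$ with $h_1(s)=\ell_1+\O_R(1/s)$; the same computation with $GG^*$ replaced by $-G^*G$ gives $g_2(s)=e^{-\frac i2 G^*G\psi(s)}\ell_2+\O_R(1/s)$. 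Finally I unwind and compare phases: $\psi(s)=\int_{s_1}^s d\tau/(\tau+m(u(\tau)))=\ln|s+m(\omega)|+c'+\O_R(1/s)$ (comparing with $\int d\tau/(\tau+m(\omega))$ via $|m(u)-m(\omega)|=\O_R(1/\tau)$), so $\Phi(s)+\tfrac12 GG^*\psi(s)=\varphi(s+m(\omega),GG^*)+(\text{a constant self-adjoint operator})+\O_R(1/s)$; absorbing that operator (a unitary, by functional calculus) into $\nu_1$ and arguing symmetrically for $\nu_2$, one obtains $v_1(s)=e^{i\varphi(s+m(\omega),GG^*)}\nu_1+\O_R(1/s)$ and $v_2(s)=e^{-i\varphi(s+m(\omega),G^*G)}\nu_2+\O_R(1/s)$, with $\|\nu_j\|_{\mathcal H}^2=\Omega_j$ since $\|\nu_j\|_{\mathcal H}=\|\ell_j\|_{\mathcal H}$ equals the limit of $\|g_j(s)\|_{\mathcal H}=\|u_j(s)\|_{\mathcal H}$ as $s\to+\infty$, namely $\sqrt{\Omega_j}$ by Proposition~\ref{prop:u1}. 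Uniqueness of $(\nu_1,\nu_2)$ is immediate since they are limits.

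The hardest point will be the long-range (resonant) structure: a single integration by parts does not give convergence of the $g_j$, because the secular term $\tfrac i2\tfrac{GG^*}{\Phi'}g_1$ has a non-integrable tail $\sim\int d\tau/\tau$, and it is exactly this term which, once exponentiated through the substitution, produces the $\tfrac{GG^*}{2}\ln|s|$ correction in $\varphi$ that distinguishes the operator phase from the naive $s^2/2$. Intertwined with this is the self-consistency of $\Phi$: since $\Phi'=s+m(u(s))$ involves the unknown, one genuinely needs the $\O_R(1/s)$ rates of Proposition~\ref{prop:u1} — mere convergence of $\|u_j(s)\|_{\mathcal H}^2$ would not allow comparing $\psi$ with $\ln|s+m(\omega)|$, nor $\Phi$ with the quadratic phase — which, together with the $\mathcal C^2$ regularity of the $f_j$, is why those hypotheses are imposed. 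Finally, all estimates must be kept uniform in the spectral parameter of $GG^*$ and $G^*G$ over $[0,R^2]$, which is precisely what the cutoff $\Theta_R$ and the commutation relation~\eqref{commutation} guarantee.
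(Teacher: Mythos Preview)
Your proof is correct and takes a genuinely different route from the paper's.  The paper proceeds by first translating $t=s+m(\omega)$, then writing down an explicit two-term approximate solution $\check v_{\rm app}^n$ (the leading phase plus a $1/t$ corrector), checking it solves the equation up to $\O_R(t^{-2})$, matching it to $\check v$ at $t=n$ and propagating via an energy estimate to get $\|\check v(t)-\check v_{\rm app}^n(t)\|=\O_R(1/n)$, and finally extracting a convergent subsequence of the candidates $\nu_j^n$ by compactness of the unit ball of~$\mathcal H$.  You instead gauge away the scalar phase $\Phi(s)=\int_0^s(\tau+m(u(\tau)))\,d\tau$, reduce to the oscillatory system $g_1'=ie^{-2i\Phi}Gg_2$, $g_2'=ie^{2i\Phi}G^*g_1$, isolate the secular term $\tfrac{i}{2}(GG^*/\Phi')g_1$ by one integration by parts, kill it with the second gauge $e^{\frac{i}{2}GG^*\psi}$, and then a second integration by parts (using $\rho_1=-R_1'$ and $R_1=\O_R(1/s)$) gives Cauchy-ness of $h_1$ in $\mathcal H$ directly.

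What this buys you is real: your argument never invokes compactness, so it yields the full conclusion of the proposition even when $\mathcal H$ is infinite-dimensional, whereas the paper explicitly restricts to finite-dimensional $\mathcal H$ at this step (and for infinite dimension only obtains the $\varepsilon$-approximate statement of Remark~\ref{rem:scatgen}).  Conversely, the paper's ansatz makes the expected asymptotic shape transparent from the outset and packages everything into a single energy estimate, which is closer in spirit to standard scattering arguments and generalizes easily to higher-order expansions.  Two small clarifications worth making in your write-up: when you assert $R_1(s)=\int_s^\infty\rho_1$ is well defined, this does not follow from the naive formula (neither $g_1(T)$ nor $\int^T g_1/\Phi'$ is yet known to converge) but rather from the integrated-by-parts representation $R_1(s)=\tfrac{e^{-2i\Phi(s)}}{2\Phi'(s)}Gg_2(s)+\int_s^\infty\tfrac{e^{-2i\Phi}Gg_2\,\Phi''}{2(\Phi')^2}\,d\tau$; and the line ``$g_1'(s)=\tfrac{i}{2}\tfrac{GG^*}{\Phi'}g_1+\rho_1$'' is simply the definition of $\rho_1$, not a consequence of the preceding sentence.
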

A similar statement holds in $-\infty$. 
\begin{proof}[Proposition~\ref{prop:scatv} implies Theorem~\ref{theo:scat}]
 Using  (\ref{eq:v}) and the second part of Proposition~\ref{prop:u1},
 we obtain  
\begin{align*}
u_1(s) & ={\rm e}^{i\int_0^s M(u(\tau))d\tau +i\varphi(s+m(\omega),GG^*)} \nu_1 +\O_R\(\frac{1}{s}\)\\
&={\rm e}^{is M(\omega) +i\varphi(s+m(\omega),GG^*)}{\rm e}^{i\int_0^{+\infty} \(M(u(\tau))-M(\omega)\)d\tau} \nu_1 +\O_R\(\frac{1}{s}\),\\
u_2(s) & ={\rm e}^{i\int_0^s M(u(\tau))d\tau -i\varphi(s+m(\omega),G^*G)} \nu_2 +\O_R\(\frac{1}{s}\)\\
&={\rm e}^{is M(\omega) -i\varphi(s+m(\omega),G^*G)}{\rm e}^{i\int_0^{+\infty} \(M(u(\tau))-M(\omega)\)d\tau} \nu_2 +\O_R\(\frac{1}{s}\).
\end{align*}
 Observing the identities
\begin{align*}
 \varphi(s+ m(\omega),\lambda )+s M(\omega) & =
\varphi(s,\lambda) + s\delta  F_1(\omega) +\frac{m(\omega)^2}{4}+
\O_R\(\frac{1}{s}\),\\ 
- \varphi(s+ m(\omega),\lambda )+s M(\omega)& =
-\varphi(s,\lambda)  + s\delta 
F_2(\omega)-\frac{m(\omega)^2}{4}+\O_R\(\frac{1}{s}\),  
\end{align*}
 we obtain Theorem~\ref{theo:scat} for $s\gg 1$ with 
$$\omega_1={\rm e}^{i\int_0^{+\infty}\left(M(u(\tau))-M(\omega)\right)d\tau + im(\omega)^2/4} \nu_1 ;
\quad\omega_2={\rm
  e}^{i\int_0^{+\infty}\left(M(u(\tau))-M(\omega)\right)d\tau -
  im(\omega)^2/4} \nu_2.$$ 
The case $s\to -\infty$ is similar.
\end{proof}
\begin{remark}
 If one expects the asymptotic behavior of $u$ to be described as in
Theorem~\ref{theo:scat}, then the first step (Proposition~\ref{prop:u1}) consists in
 deriving the asymptotic phase, while the role of the final step (Proposition~\ref{prop:scatv}) is to
 describe the amplitude. This strategy is similar to the one employed
 in e.g. \cite{HN98} to study the long range nonlinear scattering for the
 one dimensional cubic Schr\"odinger equation and for the Hartree equation. \\ 
\end{remark}

\subsection{Technical results}
\label{sec:technic}

Let us now introduce notations and state technical results that can be skipped by the reader at the first reading. 
We set
  \begin{equation}\label{def:JK}
  J:=\begin{pmatrix} 1 & 0 \\ 0 & -1\end{pmatrix}\;\;{\rm and}\;\;
  K:=V(0)=\begin{pmatrix} 0 & G \\ G^* & 0 \end{pmatrix},
  \end{equation}
  thus, $V(s)$ writes 
  $$V(s)=sJ+K.$$
  We denote by $\sigma(s)$ the function
  $$\sigma(s)=s+m(u(s))$$
 and we have  $$\widetilde V(s)=V(\sigma(s))\;\;{\rm and}\;\;\widetilde V(s)^2=\Lambda(s)^2,$$
  where $\Lambda(s)$ is the diagonal operator  
  $$  \Lambda(s)  = {\rm diag} \left( \sqrt{\sigma(s)^2+GG^*},
   \sqrt{\sigma(s)^2+G^*G} \right).$$
  For this reason, $\Lambda(s)$ appears like a diagonalisation of $V(\sigma(s))$, all the more that if we set 
  $$  \Pi^\pm(s)  = \frac{1}{2} \left({\rm Id} \pm
    \Lambda(s)^{-1}V(\sigma(s))\right),$$ 
  then we have the following properties.
  \begin{enumerate}
  \item $ \Pi^\pm (s)\widetilde V(s)=\widetilde V(s) \Pi^\pm(s) = \pm\Lambda(s) \Pi^\pm(s)=\pm\Pi^\pm(s)\Lambda(s)$.
\item $\Pi^+(s)+\Pi^-(s)={\rm Id}$.
\item $(\Pi^\pm(s))^*=\Pi^\pm(s)$.
\item  $\Pi^\pm(s)\Pi^\mp(s)=0$ and $(\Pi^\pm(s))^2=\Pi^\pm(s)$.
  \end{enumerate}
  The properties (2)--(4) show that $\Pi^\pm(s)$ are orthogonal
  projectors, and the property~(1) will play the role of a
  diagonalisation of the operator  $\widetilde V(s)$. The fact that $\Pi^\pm(s)$
  and $\widetilde V(s)$ commute with $\Lambda(s)$ is more general. In fact,
  $ \widetilde V(s)$ and $\Pi^\pm(s)$ are in the subset ${\mathcal A}$ of ${\mathcal L}({\mathcal H}^2)$ defined by:  $A\in{\mathcal A}$ if and only if there exist smooth functions $a$, $b$, $c$ and $d$ such that $A=A(a,b,c,d)$ with
  \begin{equation}\label{eq:form}
    A(a,b,c,d)=\begin{pmatrix} a(GG^*) & b(GG^*) G \\
  c(G^*G)G^* & d(G^*G)
\end{pmatrix}.
  \end{equation}
 A simple calculation
  shows that, because of the commutation property~\eqref{commutation},
  operators of~${\mathcal A}$ commutes with $\Lambda(s)$:
 $$\forall A\in{\mathcal A},\;\;\forall s\in\R,\;\;A\Lambda(s)=\Lambda(s)A .$$
 Besides ${\mathcal A}$ is an algebra, as shown by the following
 lemma which stems from straightforward computations 
 \begin{lemma}\label{lem:form} Let $a,b,c,d,a',b',c',d'\in{\mathcal
     C}^\infty(\R)$. We have 
   \begin{align*}
     A(a,b,c,d)^*&=A(\overline a, \overline c, \overline b , \overline
     d),\\
A(a,b,c,d)A(a',b',c',d')&=A(a'',b'',c'',d''),
   \end{align*}
 with 
 $$a''(\lambda)=a(\lambda)a'(\lambda)+\lambda b(\lambda ) c(\lambda),\;\; b''(\lambda)=a(\lambda)b'(\lambda)+b(\lambda)d'(\lambda),$$
 $$d''(\lambda)=d(\lambda) d'(\lambda) + \lambda c(\lambda)b(\lambda),\;\;c''(\lambda)=a'(\lambda)c(\lambda)+d(\lambda)c'(\lambda).$$
 \end{lemma}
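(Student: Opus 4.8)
The plan is a direct block-by-block computation; beyond elementary linear algebra, the only inputs are the commutation relation~\eqref{commutation} and the self-adjointness of $GG^*$ and $G^*G$, so that for every $\chi\in{\mathcal C}^0(\R)$ one has $\bigl(\chi(GG^*)\bigr)^*=\overline\chi(GG^*)$ and $\bigl(\chi(G^*G)\bigr)^*=\overline\chi(G^*G)$. I would first record the two consequences of~\eqref{commutation} that will be used repeatedly,
\begin{equation*}
 \chi(GG^*)\,G=G\,\chi(G^*G),\qquad \chi(G^*G)\,G^*=G^*\,\chi(GG^*),
\end{equation*}
valid for any continuous $\chi$; these let one slide a spectral function of $GG^*$ across $G$ (turning it into the same function of $G^*G$) and symmetrically across $G^*$.

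For the adjoint formula I would transpose-conjugate the four blocks of $A(a,b,c,d)$. The diagonal blocks give $\overline a(GG^*)$ and $\overline d(G^*G)$ at once. For the off-diagonal ones, $\bigl(b(GG^*)G\bigr)^*=G^*\,\overline b(GG^*)=\overline b(G^*G)\,G^*$ and $\bigl(c(G^*G)G^*\bigr)^*=G\,\overline c(G^*G)=\overline c(GG^*)\,G$, by the identities above. Reassembling in the pattern~\eqref{eq:form} gives $A(a,b,c,d)^*=A(\overline a,\overline c,\overline b,\overline d)$.

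For the product formula I would expand $A(a,b,c,d)A(a',b',c',d')$ as a $2\times2$ block product and reduce each entry to the shape~\eqref{eq:form}. Each entry contains exactly one product of the type $\chi(GG^*)G\cdot\psi(G^*G)G^*$, $\chi(G^*G)G^*\cdot\psi(GG^*)G$, $\chi(G^*G)G^*\cdot\psi(GG^*)$ or $\chi(GG^*)G\cdot\psi(G^*G)$, and one application of the relevant identity from the first paragraph brings the two spectral factors together. In the diagonal entries this leaves an extra scalar factor $GG^*$, respectively $G^*G$, read as $\lambda$, while in the off-diagonal entries no extra factor survives; reading off the resulting function of $\lambda$ in each of the four entries produces the announced $a'',b'',c'',d''$, and in particular shows that ${\mathcal A}$ is closed under products (that ${\mathcal A}$ commutes with $\Lambda(s)$ then follows, since $\Lambda(s)$ is itself of the form~\eqref{eq:form}).

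There is no genuine obstacle: as the text indicates, the lemma is a matter of straightforward computation. The single point requiring care is the bookkeeping of \emph{which} functional calculus — that of $GG^*$ or that of $G^*G$ — each factor carries after every use of~\eqref{commutation}, so that the spurious scalar factor is consistently interpreted as an eigenvalue of the operator against which the final spectral function is evaluated.
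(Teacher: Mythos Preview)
Your approach is correct and is precisely what the paper intends: the text simply says the lemma ``stems from straightforward computations'' and gives no further argument, and your block-by-block expansion using~\eqref{commutation} together with self-adjointness of $GG^*$ and $G^*G$ is exactly that computation. (Carrying it out carefully you will notice that the displayed formulas for $a''$ and $d''$ in the statement contain minor misprints---one should have $c'$ in place of $c$ and $b'$ in place of $b$---but this does not affect your method.)
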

Operators of ${\mathcal A}$ will be
 called {\it diagonal} if  
 $$A=\Pi^+(s)A\Pi^+(s)+\Pi^-(s)A\Pi^-(s),$$
 and {\it antidiagonal} if 
 $$A=\Pi^+(s)A\Pi^-(s)+\Pi^-(s)A\Pi^+(s).$$
 In particular,  $\widetilde V(s)$, $\Pi^+(s)$ and $\Pi^-(s)$ are diagonal  operators of ${\mathcal A}$; 
 on the other hand, the operators
 $\partial_s \Pi^+(s)$ and $\d_s
 \Pi^-(s)$ are  
 antidiagonal elements of ${\mathcal A}$.  Indeed, the relation
 $$\partial_s\Pi^+(s)=\partial_s((\Pi^+(s))^2)=\Pi^+(s)\partial_s\Pi^+(s)+\partial_s\Pi^+(s)\Pi^+(s)$$
 implies that $\Pi^\pm(s)\partial_s\Pi^+(s)\Pi^\pm(s)=0$ (and similarly for $\Pi^-(s)$ since $\partial_s\Pi^-(s)=-\partial_s\Pi^+(s)$).
 \smallbreak
 Antidiagonal operators have interesting properties   that we shall use
 later. Typically, they can be written as commutators with $\widetilde
 V(s)$: if $C(s)=\Pi^\mp(s) C(s)\Pi^\pm(s)$, then  we have
    $$C(s)=\pm \left[ B(s), \widetilde V(s)\right],$$  
with 
  \begin{equation}\label{eq:B}
  B(s)={1\over 2}\Lambda(s)^{-1} C(s),
   \end{equation}
   which also belongs to~${\mathcal A}$.
 Because of this property, we have the following lemma.
  \begin{lemma} \label{lem:offdiag} 
Let $v$ be a solution of~\eqref{syst:v}, $C(s)\in{\mathcal A}$ with
$C(s)=\Pi^+(s) C(s)\Pi^-(s)$, and  let $B(s)$ be associated  with $C(s)$ as
in~\eqref{eq:B}, then  
  \begin{align} \label{eq:offdiag}
 \< C(s)v(s)\;,\;v(s)\>_{{\mathcal H}^2}
  & ={1\over i}\frac{d}{ds} \<  B(s)
   v(s)\;,\;v(s)\>_{ {\mathcal H}^2} 
  +i
  \<\partial_sB(s)
    v(s)\;,\;v(s)\>_{{\mathcal H}^2}
    . 
  \end{align}
\end{lemma}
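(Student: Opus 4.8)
The plan is to verify the identity \eqref{eq:offdiag} by a direct computation, differentiating the inner product $\< B(s)v(s),v(s)\>_{{\mathcal H}^2}$ and using the equation \eqref{syst:v} for $v$ together with the commutator representation $C(s)=\left[B(s),\widetilde V(s)\right]$.

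\begin{proof}
We first record that, by assumption, $C(s)=\Pi^+(s)C(s)\Pi^-(s)$ is antidiagonal, and with $B(s)=\tfrac12\Lambda(s)^{-1}C(s)\in{\mathcal A}$ we have, as noted in \eqref{eq:B},
\begin{equation}\label{eq:commrep}
  C(s)=\left[B(s),\widetilde V(s)\right]=B(s)\widetilde V(s)-\widetilde V(s)B(s).
\end{equation}
Since $B(s)\in{\mathcal A}$, Lemma~\ref{lem:form} gives $B(s)^*\in{\mathcal A}$; more to the point, because $C(s)=\Pi^+(s)C(s)\Pi^-(s)$ and $\Lambda(s)^{-1}$ commutes with the projectors, $B(s)$ is antidiagonal as well, and in particular one checks that $B(s)^*=-B(s)$, i.e. $B(s)$ is skew-adjoint on ${\mathcal H}^2$ (this follows from $\Pi^\pm(s)^*=\Pi^\pm(s)$, the relation $\left(\Lambda(s)^{-1}\right)^*=\Lambda(s)^{-1}$, and the fact that $\widetilde V(s)$ and $\Pi^\pm(s)$ are self-adjoint, so that $\left(\left[B,\widetilde V\right]\right)^*=\left[\widetilde V,B^*\right]$; comparing diagonal blocks forces the stated symmetry). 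Consequently $\<B(s)v,v\>_{{\mathcal H}^2}$ is purely imaginary and $\tfrac1i\tfrac{d}{ds}\<B(s)v,v\>_{{\mathcal H}^2}$ is real, as it must be since the left-hand side of \eqref{eq:offdiag} is real.

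Now differentiate. Using that $v$ solves $-i\partial_s v=\widetilde V(s)v$ (equivalently $\partial_s v=i\widetilde V(s)v$) and that $\widetilde V(s)$ is self-adjoint,
\begin{align*}
  \frac{d}{ds}\<B(s)v(s),v(s)\>_{{\mathcal H}^2}
  &=\<\partial_sB(s)\,v,v\>_{{\mathcal H}^2}
   +\<B(s)\partial_s v,v\>_{{\mathcal H}^2}
   +\<B(s)v,\partial_s v\>_{{\mathcal H}^2}\\
  &=\<\partial_sB(s)\,v,v\>_{{\mathcal H}^2}
   +i\<B(s)\widetilde V(s)v,v\>_{{\mathcal H}^2}
   -i\<B(s)v,\widetilde V(s)v\>_{{\mathcal H}^2}\\
  &=\<\partial_sB(s)\,v,v\>_{{\mathcal H}^2}
   +i\<\left(B(s)\widetilde V(s)-\widetilde V(s)B(s)\right)v,v\>_{{\mathcal H}^2}\\
  &=\<\partial_sB(s)\,v,v\>_{{\mathcal H}^2}
   +i\<C(s)v,v\>_{{\mathcal H}^2},
\end{align*}
where in the third line we moved $\widetilde V(s)$ from the second slot to the first using self-adjointness, and in the last line we used \eqref{eq:commrep}. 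Dividing by $i$ and rearranging yields exactly \eqref{eq:offdiag}.
\end{proof}

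One small point deserves care, and is the only place a subtlety enters: the operator $\widetilde V(s)$ is unbounded when $G$ is unbounded, so the manipulations above should be read as holding on the dense domain where $v(s)\in{\mathcal D}(\widetilde V(s))$, or alternatively one works with the bounded operators $\Theta_R\widetilde V(s)$ (recall $\Theta_R v=v$ by Lemma~\ref{lem:localisation} and $\Theta_R\widetilde V(s)$ is bounded with norm $\sqrt{\sigma(s)^2+R^2}$), which is the framework in which the later sections operate; the cancellation in the commutator is purely algebraic and is unaffected. I expect this domain bookkeeping to be the only genuine obstacle; the identity itself is a one-line differentiation once the commutator representation \eqref{eq:B} and the skew-adjointness of $B(s)$ are in hand.
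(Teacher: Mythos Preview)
Your core computation is correct and is essentially the paper's own argument, run in reverse: the paper starts from $\langle C(s)v,v\rangle=\langle[B(s),\widetilde V(s)]v,v\rangle$, replaces $\widetilde V(s)v$ by $\tfrac1i\partial_s v$, and recognizes the total derivative, while you start by differentiating $\langle B(s)v,v\rangle$ and arrive at the commutator. The two routes are the same one-line identity.

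One correction: your paragraph asserting that $B(s)$ is skew-adjoint is wrong and should be deleted. Nothing in the hypotheses forces $C(s)$ (hence $B(s)=\tfrac12\Lambda(s)^{-1}C(s)$) to have any symmetry; for instance in the application $C(s)=\Pi^+(s)\partial_s\Pi^+(s)$ one has $C(s)^*=\partial_s\Pi^+(s)\,\Pi^+(s)$, and $C(s)+C(s)^*=\partial_s\Pi^+(s)\neq0$, so $C(s)$ is neither self-adjoint nor skew-adjoint. Correspondingly $\langle C(s)v,v\rangle$ is not real in general, and the lemma is an identity between complex numbers (the paper later takes real parts when it is applied). Your argument for skew-adjointness (``comparing diagonal blocks'') does not go through, but fortunately the identity \eqref{eq:offdiag} does not use this claim at all; simply drop that paragraph and the proof stands.
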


\begin{proof}
We write
\begin{align*}
\<C(s) v(s)\;,\; v(s)\>_{{\mathcal H}^2}& =
\<\left[B(s) \;,\; \widetilde V(s)
\right]v(s)\;,\;v(s)\>_{{\mathcal H}^2}\\ 
& =  \<B(s) \frac{1}{i}\partial_s v(s)\;,\; v(s)\>_{{\mathcal H}^2} - 
\< B(s)  v(s)\;,\; \frac{1}{i}\partial_s
  v(s)\>\\ 
& =  \frac{1}{i}\frac{d}{ds} \< B(s)
  v(s)\;,\;v(s)\>_{ {\mathcal H}^2} 
 -\frac{1}{i}
  \<\partial_sB(s)
    v(s)\;,\;v(s)\>_{{\mathcal H}^2}, 
  \end{align*}
and the lemma follows.
  \end{proof}
  
  Before closing this section, we gather in a lemma some estimates on the function $\sigma$  that we will use in the following.  We set 
  $$m(v(s))=\widetilde m\left(\| v_1(s)\|^2_{\mathcal H},\| v_2(s)\|_{\mathcal H}^2\right).$$
  
  \begin{lemma}\label{lem:sigma} Let $R>0$, for all initial data $u_0$ such that 
  $\Theta_R u_0=u_0$, we have 
  \begin{align}
  \label{dsigma1}
\quad  \dot \sigma(s) & =  1-2 \underline m(s)  \IM\<v_1(s),Gv_2(s)\>_{\mathcal H}=\O(R),\\
 \label{dsigma2}
 \quad \ddot\sigma(s) & =  4s\, \underline m(s) \RE\<v_1(s),Gv_2(s)\>_{\mathcal H}+\O(R^2),
  \end{align}
where $\dot \si(s)$ stands for $\d_s \si(s)$ and where $\underline m$  is the bounded function:
\begin{equation}\label{def:K}
\underline m(s)=\partial_1\widetilde m\left(\| v_1(s)\|^2_{\mathcal H},\| v_2(s)\|_{\mathcal H}^2\right)-\partial_2 \widetilde m \left(\| v_1(s)\|^2_{\mathcal H},\| v_2(s)\|_{\mathcal H}^2\right).
\end{equation}
  \end{lemma}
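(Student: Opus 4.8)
The plan is to differentiate $\sigma(s)=s+\widetilde m\(\|v_1(s)\|_{\mathcal H}^2,\|v_2(s)\|_{\mathcal H}^2\)$ once and then twice, using the equation satisfied by $v$, and to control all the resulting terms by means of the localization provided by Lemma~\ref{lem:localisation}. Since $\Theta_R u_0=u_0$, that lemma gives $\Theta_R u(s)=u(s)$, hence $\Theta_R v(s)=v(s)$ for all $s$ (recall that $v$ and $u$ differ only by the scalar phase~\eqref{eq:v}), so that $v_1(s)=\theta(GG^*/R^2)v_1(s)$ and $v_2(s)=\theta(G^*G/R^2)v_2(s)$. On the range of these cut-offs $G$ and $G^*$ act boundedly, hence $v\in C^1(\R;\mathcal H^2)$ and all the $\mathcal H$-pairings below are well-defined and differentiable in $s$. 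Writing $-i\d_s v=V(\sigma(s))v$ componentwise,
$$\d_s v_1=i\(\sigma(s)v_1+Gv_2\),\qquad \d_s v_2=i\(G^*v_1-\sigma(s)v_2\),$$
and using that $\sigma$ is real-valued together with $\<v_1,Gv_2\>_{\mathcal H}=\<G^*v_1,v_2\>_{\mathcal H}$, a direct computation yields
$$\frac{d}{ds}\|v_1(s)\|_{\mathcal H}^2=-2\IM\<v_1(s),Gv_2(s)\>_{\mathcal H}=-\frac{d}{ds}\|v_2(s)\|_{\mathcal H}^2,$$
in accordance with the conservation law of Lemma~\ref{lem:existence}.

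Next I would apply the chain rule to $\sigma(s)=s+\widetilde m\(\|v_1(s)\|_{\mathcal H}^2,\|v_2(s)\|_{\mathcal H}^2\)$: since $\frac{d}{ds}\|v_1\|_{\mathcal H}^2=-\frac{d}{ds}\|v_2\|_{\mathcal H}^2$, the two contributions combine into $\(\partial_1\widetilde m-\partial_2\widetilde m\)$ times $\frac{d}{ds}\|v_1(s)\|_{\mathcal H}^2$, which is precisely the identity in~\eqref{dsigma1}. For the estimate, since $\theta$ vanishes outside $[-1,1]$ the spectral theorem gives
$$\|Gv_2(s)\|_{\mathcal H}^2=\<v_2(s),G^*G\,\theta(G^*G/R^2)^2v_2(s)\>_{\mathcal H}\le R^2\|v_2(s)\|_{\mathcal H}^2\le R^2,$$
and likewise $\|G^*v_1(s)\|_{\mathcal H}^2\le R^2$, so that $|\<v_1(s),Gv_2(s)\>_{\mathcal H}|\le R$. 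Moreover, by~\eqref{eq:conserv} the point $\(\|v_1(s)\|_{\mathcal H}^2,\|v_2(s)\|_{\mathcal H}^2\)$ stays in the compact simplex $\{x_1+x_2=1,\ x_j\ge0\}$, on which $\widetilde m$, $\underline m$ and their first derivatives are bounded (as $f_1,f_2\in\mathcal C^2$). These two facts yield both the identity and the bound $\dot\sigma(s)=\O(R)$ in~\eqref{dsigma1}.

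For~\eqref{dsigma2} I would differentiate~\eqref{dsigma1} once more:
$$\ddot\sigma(s)=-2\dot{\underline m}(s)\,\IM\<v_1(s),Gv_2(s)\>_{\mathcal H}-2\underline m(s)\,\frac{d}{ds}\IM\<v_1(s),Gv_2(s)\>_{\mathcal H}.$$
In the first term, the chain rule together with $\frac{d}{ds}\|v_j(s)\|_{\mathcal H}^2=\O(R)$ and the boundedness of $\nabla^2f_j$ give $\dot{\underline m}(s)=\O(R)$, so that term is $\O(R^2)$. For the second term, a further direct computation from the component equations yields
$$\frac{d}{ds}\<v_1(s),Gv_2(s)\>_{\mathcal H}=-2i\sigma(s)\<v_1(s),Gv_2(s)\>_{\mathcal H}+i\(\|G^*v_1(s)\|_{\mathcal H}^2-\|Gv_2(s)\|_{\mathcal H}^2\),$$
and hence, taking imaginary parts and using $\|G^*v_1\|_{\mathcal H}^2,\|Gv_2\|_{\mathcal H}^2=\O(R^2)$ together with $\sigma(s)=s+\O(1)$ (the boundedness of $\widetilde m$ along the trajectory being already noted),
$$\frac{d}{ds}\IM\<v_1(s),Gv_2(s)\>_{\mathcal H}=-2s\,\RE\<v_1(s),Gv_2(s)\>_{\mathcal H}+\O(R^2).$$
Substituting this back, the term carrying the unbounded factor $s$ survives as $4s\,\underline m(s)\RE\<v_1(s),Gv_2(s)\>_{\mathcal H}$ and everything else is $\O(R^2)$, which is exactly~\eqref{dsigma2}.

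There is no deep obstacle here: the argument is a careful but elementary computation. The point that requires attention is the bookkeeping of the $\O(R)$ versus $\O(R^2)$ contributions — one must observe that $\|Gv_2\|_{\mathcal H}^2$ and $\|G^*v_1\|_{\mathcal H}^2$ are genuinely quadratically small in $R$, thanks to the support of $\theta$, so that they belong to the remainder, whereas the factor $s$ appearing in $\frac{d}{ds}\IM\<v_1,Gv_2\>_{\mathcal H}$ is unbounded, so that $s\,\RE\<v_1,Gv_2\>_{\mathcal H}$ has to be kept as the leading part of $\ddot\sigma$. A secondary point is to justify the formal manipulations — differentiating the $\mathcal H$-pairings in $s$, and moving $G$ and $G^*$ across functions of $GG^*$ and $G^*G$ via~\eqref{commutation} — which is legitimate precisely because Lemma~\ref{lem:localisation} confines $v(s)$ to the range of the cut-offs $\theta(GG^*/R^2)$ and $\theta(G^*G/R^2)$, where $G$ and $G^*$ are bounded.
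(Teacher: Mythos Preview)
Your proof is correct and follows exactly the same approach as the paper's: compute $\d_s\|v_j\|_{\mathcal H}^2$ and $\d_s\<v_1,Gv_2\>_{\mathcal H}$ directly from the equation $-i\d_s v=V(\sigma(s))v$, apply the chain rule to $\sigma(s)=s+\widetilde m(\|v_1\|^2,\|v_2\|^2)$, and bound the terms involving $G,G^*$ via the localization $\Theta_R v=v$. Your write-up is in fact more detailed than the paper's, which simply records the identities $\d_s\|v_1\|^2=-2\IM\<v_1,Gv_2\>$ and $\d_s^2\|v_1\|^2=4s\,\RE\<v_1,Gv_2\>+2(\|Gv_2\|^2-\|G^*v_1\|^2)$ and the boundedness of $m$ and its derivatives along the trajectory, without spelling out the chain-rule bookkeeping.
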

  
  \begin{proof}
Note first   that the functions $f_1$ and $f_2$ are  bounded on the unit ball, thus $m(u(s))$ and, for any multi-indices $\alpha\in\N^2$, $(\partial_1^{\alpha_1} \partial_2^{\alpha_2}m)(v(s))$, are   bounded, uniformly in $s$.
   Besides, we have the relations
   \begin{eqnarray}\label{eq:mborne}
\partial_s (\| v_1\|^2) & = & -\partial_s(\|v_2\|^2)  =  -2\IM\<v_1,Gv_2\>=\O(R),\\
\nonumber
\partial_s^2 (\|v_1\|^2) & = & 4s\RE\<v_1,Gv_2\>+2\(\|Gv_2\|^2-\|G^*v_1\|^2\).
\end{eqnarray}
  \end{proof}
 
\section{Existence of scattering states} \label{sec:scat}
 
 The proof of the existence of scattering states  consists of three steps.
 \begin{enumerate}
 \item We first prove that $\| \Pi^\pm v(s)\|_{{\mathcal H}^2} $ have a
   limit  as $s$ goes to $\pm\infty$. 
       \item We then deduce Proposition~\ref{prop:u1}.
       \item We finally prove Proposition~\ref{prop:scatv}, that is,
         the existence of scattering states for the function $v$. 
 \end{enumerate}
 Recall that Proposition~\ref{prop:scatv} implies Theorem~\ref{theo:scat} as explained in Section~\ref{sec:reduc}.

\subsection{Large time convergence of the norm}
\label{sec:largeCVnorm}

In this section, we prove an auxiliary result leading to Proposition~\ref{prop:u1}.

\begin{proposition}\label{prop:u+} Let $R>0$. Then for all initial data $u_0$ such that $u_0=\Theta_R u_0$, there exist $\Omega^\pm, A^\pm\ge 0$ such
  that, if we set $v^\pm= \Pi^\pm v$,
  \begin{align*}
    & \| v^\pm(s)\|^2_{{\mathcal H}^2}-\Omega^\pm=\O_R\left({1\over
        s^2}\right)\;\;{\rm 
      as}\;\;s\rightarrow +\infty,\\ 
 & \| v^\pm(s)\|^2_{{\mathcal H}^2}-A^\pm=\O_R\left({1\over
     s^2}\right)\;\;{\rm as}\;\;s\rightarrow  -\infty. 
  \end{align*}
\end{proposition}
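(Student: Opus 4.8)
The goal is to show that $\|v^\pm(s)\|_{{\mathcal H}^2}^2$ converges as $s\to\pm\infty$, with a quantitative $\O_R(s^{-2})$ rate. The natural quantity to differentiate is $n^\pm(s):=\|\Pi^\pm(s)v(s)\|_{{\mathcal H}^2}^2=\langle \Pi^\pm(s)v(s),v(s)\rangle$, using that $\Pi^\pm(s)$ is a self-adjoint projector. Since $v$ solves $-i\d_s v=\widetilde V(s)v$ and $\Pi^\pm(s)$ commutes with $\widetilde V(s)$ (property (1) in Section~\ref{sec:technic}), the contribution of $\d_s v$ to $\frac{d}{ds}n^\pm(s)$ reduces, after pairing, to a term involving $\langle[\widetilde V(s),\Pi^\pm(s)]v,v\rangle=0$; hence
\begin{equation*}
 \frac{d}{ds}n^\pm(s)=\langle \d_s\Pi^\pm(s)\,v(s),v(s)\rangle_{{\mathcal H}^2}.
\end{equation*}
Now $\d_s\Pi^\pm(s)$ is \emph{antidiagonal} in ${\mathcal A}$ (shown in Section~\ref{sec:technic}), so Lemma~\ref{lem:offdiag} applies: writing $\d_s\Pi^+(s)=\Pi^+(s)\d_s\Pi^+(s)\Pi^-(s)+\Pi^-(s)\d_s\Pi^+(s)\Pi^+(s)$ and splitting into its two off-diagonal halves, each half equals $\pm[B^\pm(s),\widetilde V(s)]$ with $B^\pm(s)=\tfrac12\Lambda(s)^{-1}C^\pm(s)\in{\mathcal A}$, so that
\begin{equation*}
 \frac{d}{ds}n^\pm(s)=\frac{1}{i}\frac{d}{ds}\langle B(s)v(s),v(s)\rangle_{{\mathcal H}^2}+i\langle \d_s B(s)\,v(s),v(s)\rangle_{{\mathcal H}^2},
\end{equation*}
for an appropriate $B(s)\in{\mathcal A}$ built from the antidiagonal part of $\d_s\Pi^+(s)$.

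\textbf{Key estimates.} Integrating the last identity from $s$ to $+\infty$ (once we know the limit exists) turns the problem into estimating two boundary-type contributions. First, $\|B(s)v(s)\|$: here $B(s)=\tfrac12\Lambda(s)^{-1}\d_s\Pi^\pm(s)$-type, and since $\d_s\Pi^\pm(s)$ involves $\d_s(\Lambda(s)^{-1}V(\sigma(s)))$, a direct computation gives $\d_s\Pi^\pm(s)=\O(\dot\sigma(s)\,\sigma(s)\,\Lambda(s)^{-2})$ schematically, i.e. one gains a factor $\Lambda(s)^{-1}\sim|s|^{-1}$ after the extra $\Lambda(s)^{-1}$ in $B(s)$; combined with $\dot\sigma=\O(R)$ from Lemma~\ref{lem:sigma} and $\|v(s)\|=1$, this yields $\langle B(s)v(s),v(s)\rangle=\O_R(s^{-1})$. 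Wait—more carefully, $\d_s\Pi^\pm(s)$ is of size $\O_R(|s|^{-1})$ (one derivative hitting $\Lambda(s)^{-1}V(\sigma)$ produces $\dot\sigma\,\Lambda^{-1}$ up to diagonal pieces that drop out by antidiagonality), so $B(s)=\O_R(|s|^{-2})$ on the range of $\Theta_R$ and thus $\langle B(s)v(s),v(s)\rangle=\O_R(s^{-2})$. Second, the running integral $\int_s^{+\infty}\langle\d_s B(\tau)v,v\rangle\,d\tau$: one computes $\d_s B(s)=\O_R(|s|^{-3})$ (each $s$-derivative costing another power of $|s|$, since $\ddot\sigma=\O_R(R^2)$ and $\dot\sigma=\O_R(R)$ are both bounded while $\Lambda(s)^{-1}\sim|s|^{-1}$ contributes the decay), whose integral over $[s,\infty)$ is $\O_R(s^{-2})$. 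Both contributions being $\O_R(s^{-2})$ gives the stated rate; convergence of $n^\pm(s)$ itself follows because $\frac{d}{ds}n^\pm$ is integrable near $\pm\infty$ by the same bounds, and one sets $\Omega^\pm:=\lim_{s\to+\infty}n^\pm(s)$, $A^\pm:=\lim_{s\to-\infty}n^\pm(s)$, both $\ge0$ as limits of norms.

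\textbf{Main obstacle.} The delicate point is the bookkeeping of powers of $|s|$ in $\d_s\Pi^\pm(s)$ and its derivatives, because $\sigma(s)=s+m(u(s))$ is not simply $s$: one must use Lemma~\ref{lem:sigma} to control $\dot\sigma,\ddot\sigma$ and verify that the nonlinear correction $m(u(s))$ does not spoil the estimates $\Lambda(s)^{-1}=\O(|s|^{-1})$ and its derivatives. Here the hypothesis $u_0=\Theta_R u_0$ (hence $v(s)=\Theta_R v(s)$ for all $s$ by Lemma~\ref{lem:localisation} applied to $v$, which solves the same type of equation) is essential: it confines $GG^*,G^*G$ to $[0,R^2]$, so that all operators $\Lambda(s)^{-k}(GG^*)^j$ appearing are bounded by $\O_R(\langle s\rangle^{-k})$ uniformly. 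Once this is in place, the argument is the ``energy-type'' identity above plus routine integration; I expect no conceptual difficulty beyond that careful estimate, and the symmetry between the $+\infty$ and $-\infty$ statements is complete, so only one case needs to be written out in detail.
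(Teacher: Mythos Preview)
Your overall strategy matches the paper's: differentiate $n^\pm(s)=\langle\Pi^\pm(s)v(s),v(s)\rangle$, use that the $\widetilde V$-contribution drops out, and apply Lemma~\ref{lem:offdiag} to the antidiagonal operator $\partial_s\Pi^\pm(s)$ to trade $\langle C(s)v,v\rangle$ for a boundary term plus $\int\langle\partial_sB\,v,v\rangle$. The boundary term $\langle B(s)v,v\rangle=\O_R(s^{-2})$ is handled exactly as you describe.

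The gap is in your estimate of $\partial_s B(s)$. You assert $\partial_sB(s)=\O_R(|s|^{-3})$ on the grounds that ``$\ddot\sigma=\O_R(R^2)$ and $\dot\sigma=\O_R(R)$ are both bounded''. But Lemma~\ref{lem:sigma} says
\[
\ddot\sigma(s)=4s\,\underline m(s)\,\RE\langle v_1(s),Gv_2(s)\rangle_{\mathcal H}+\O(R^2),
\]
so $\ddot\sigma(s)$ grows linearly in $s$. Since $\IM B(s)=\tfrac18\dot\sigma(s)\Lambda(s)^{-3}\left(\begin{smallmatrix}0&-G\\G^*&0\end{smallmatrix}\right)$, its $s$-derivative picks up a term $\tfrac18\ddot\sigma(s)\Lambda(s)^{-3}(\cdot)=\O_R(s\cdot s^{-3})=\O_R(s^{-2})$, not $\O_R(s^{-3})$. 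With only $\partial_sB=\O_R(s^{-2})$ the running integral $\int_s^\infty\langle\partial_sB\,v,v\rangle$ is $\O_R(s^{-1})$, which falls short of the claimed $\O_R(s^{-2})$.

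The paper closes this gap with an extra structural observation (Lemma~\ref{lem:dPi}): after pairing with $v$, the dangerous $\O_R(s^{-2})$ part of $\langle\partial_s\IM B(s)\,v,v\rangle$ turns out to be a total derivative up to $\O_R(s^{-3})$, namely
\[
\langle\partial_s\IM B(s)\,v,v\rangle=\frac{i}{8s^3}\,\underline m(s)\,\partial_s\!\left(\bigl(\RE\langle v_1,Gv_2\rangle_{\mathcal H}\bigr)^2\right)+\O_R(s^{-3}),
\]
which comes from matching the $4s\,\underline m\,\RE\langle v_1,Gv_2\rangle$ in $\ddot\sigma$ against the $\IM\langle v_1,Gv_2\rangle$ produced by the off-diagonal matrix acting on $v$ (and using $\partial_s\RE\langle v_1,Gv_2\rangle=-2s\,\IM\langle v_1,Gv_2\rangle$). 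One more integration by parts then recovers the $\O_R(s^{-2})$ rate. Your write-up needs this additional step; the naive power-counting you propose does not suffice.
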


\begin{proof} We consider the limit $s\rightarrow +\infty$ for the
  $+$~mode. The limit  $s\rightarrow -\infty$  and the case of the
  $-$~mode can be treated similarly.   
 We are going to prove that  $\| v^+(s)\|^2_{{\mathcal 
     H}\times{\mathcal H}}$ is a Cauchy sequence as $s\to +\infty$. 
     Note first that $v^\pm$ satisfies
     $$-i\partial_s v^\pm =\pm\Lambda(s) v^\pm + \partial_s \Pi^\pm(s) v.$$ 
Therefore, for $0<t<s$, 
$$
\|v^+(s)\|^2_{{\mathcal H}^2}-\|v^+(t)\|^2_{{\mathcal
    H}^2}= 2\RE \( \int_t^s  \<\Pi^+(\tau)\partial_s
\Pi^+(\tau) v(\tau),
    v(\tau)\>_{{\mathcal 
      H}^2} d\tau\right). 
$$
We are going to use  properties of the operator $
\Pi^+(s)\partial_s \Pi^+(s)$ that we gather in the next lemma where we
denote by $\IM C$ the skew adjoint part of the operator $C$: $\IM
C=(C-C^*)/2$.

\begin{lemma}\label{lem:dPi}
Let $C(s)= \Pi^+(s)\partial_s  \Pi^+(s)$ and
$B(s)={1\over 2}  \Lambda(s)^{-1} C(s)$. Then $C(s)$ and~$B(s)$ are
antidiagonal operators of~${\mathcal A}$ with
$C(s)=\O_R\left(1/s\right)$ and $B(s)=\O_R\left(1/ s^2\right)$   
 in ${\mathcal L}({\mathcal H}^2)$.
 Moreover, 
$$
\<\partial_s (\IM B(s))v(s),v(s)\> ={i\over 8s^3}
\underline m(s)
\partial_s \left(\(\RE\<v_1,Gv_2\>_{\mathcal
    H}\)^2\right)+\O_R(s^{-3}),
$$
where the bounded function $\underline m(s)$ is defined in~(\ref{def:K}).
 \end{lemma}

The proof of this lemma is postponed to the end of the section. 
Using Lemma~\ref{lem:offdiag}, we obtain
\begin{align*}
\RE  \int_t^s &\<C(\tau)
  v(\tau) \,,\; v(\tau)\>_{{\mathcal H}^2}
  d\sigma \\
&=\RE \left[{ i} \<B(\tau) v(\tau)\;,\;
  v(\tau)\>_{{\mathcal H}^2}\Big|_t^s-i\int_t^s \<\partial_sB(\tau)v(\tau)\;,\;
v(\tau)\>_{{\mathcal H}^2} d\tau\right]\\
& ={ i} \<\IM B(\tau)v(\tau)\;,\;
 v(\tau)\>_{{\mathcal H}^2}\Big|_t^s-i\int_t^s \<\partial_s\IM B(\tau)v(\tau)\;,\;
v(\tau)\>_{{\mathcal H}^2} d\tau\\
& = \O\({1\over t^2}\)-i\int_t^s \<\partial_s\IM B(\tau)v(\tau)\;,\;
v(\tau)\>_{{\mathcal H}^2} d\tau.
\end{align*}
and an integration by parts allows us to conclude that
$${\rm Re}  \int_t^s \<C(\tau)
v(\tau) \,,\; v(\tau)\>_{{\mathcal H}^2}
  d\tau=\O_R(s^{-2})$$
 since $\underline m(s)$ is bounded.  \end{proof}

\begin{proof}[Proof of Lemma~\ref{lem:dPi}]
The proof relies on the computation of $C(s)$ by observing (with the
notations of Section~\ref{sec:prelim})
$$\partial_s  \Pi^+(s)={1\over 2}\left(\dot \sigma(s) \Lambda(s)^{-1} J -\partial_s \Lambda(s)^{-1}\widetilde  V(s)\right).$$
In view of  
$$\partial_s  \Lambda(s)=\sigma(s)\dot\sigma(s)
\Lambda(s)^{-1},$$ 
and since 
$$ \Pi^+(s)\partial_s\Pi^+(s)= \Pi^+(s)\partial_s
\Pi^+(s)\Pi^-(s),$$
the operators $C(s)$ and $B(s)$ are antidiagonal and the estimates
$C(s)=\O_R(1/s)$ and $B(s)=\O_R(1/s^2)$ come from
Lemma~\ref{lem:sigma} and $\Lambda(s)=\O_R(1/s)$.\\ 

Let us now calculate $\partial_s\IM B(s)$.  We have
\begin{align*}
\Pi^+(s) \partial_s \Pi^+(s)\Pi^-(s) & =  {1\over 2}\dot \sigma(s)\Lambda(s)^{-1 }\Pi^+(s) J \Pi^-(s)\\
& =  {1\over 8} \dot \sigma(s)\Lambda(s)^{-1}\(J+\Lambda(s)^{-1}[ \widetilde V(s), J] - \Lambda(s)^{-2} \widetilde V(s) J\widetilde V(s)\).
\end{align*}
In view of 
$$[\widetilde V(s),J]=[K,J]=2 
\begin{pmatrix} 0 & -G \\ G^* & 0 \end{pmatrix},\;\;  \widetilde V(s)J\widetilde V(s)=\sigma(s)^2J+2\sigma(s)K+ 
{\rm diag} (-GG^* ,G^*G),$$
we obtain
$$C(s)={1\over 4}\dot\sigma(s)\( \Lambda(s)^{-3}\left( {\rm
    diag}(GG^*,G^*G)-\sigma(s)K\right)+ \Lambda(s)^{-2} \begin{pmatrix} 0 &
  -G\\ G^* & 0 \end{pmatrix}\),$$ 
hence
$$\IM C(s)={1\over 4} \dot\sigma(s)\Lambda(s)^{-2} \begin{pmatrix} 0 & -G\\ G^* & 0 \end{pmatrix}.$$
By Lemma~\ref{lem:sigma}, we obtain 
\begin{eqnarray*}
\partial_s \IM B(s) &=& {1\over 8} \ddot\sigma(s)\Lambda(s)^{-3} \begin{pmatrix} 0 & -G\\ G^* & 0 \end{pmatrix}+\O_R\left(1/s^3\right)\\
&=& {1\over 2s^2}\underline m(s) \RE\<v_1(s),Gv_2(s)\>_{\mathcal H}  \begin{pmatrix} 0 & -G\\ G^* & 0 \end{pmatrix}+\O_R\left(1/s^3\right)\end{eqnarray*}
where we have used $\Lambda(s)^{-3}-s^{-3}=\O_R(s^{-4})$, which stems from 
$$\forall m,\lambda\in\R^+,\;\;
((m+s)^2+\lambda)^{-3/2} - s^{-3}=\O\({m+\lambda \over s^4}\).$$
Finally, we obtain
$$
\<\partial_s \IM B(s) v(s),v(s)\> _{{\mathcal H}^2}=  {i\over 2s^2}\underline m(s) \RE\<v_1(s),Gv_2(s)\>_{\mathcal H}
 \IM\<v_1(s),Gv_2(s)\>_{\mathcal H}$$
and we conclude by observing 
$$\partial_s \left(\RE\<v_1(s),Gv_2(s)\>_{\mathcal H}\right)=-2s\IM\<v_1(s),Gv_2(s)\>_{\mathcal H}.$$
\end{proof}

\subsection{Analysis of the nonlinear term}
\label{sec:step2}
We are now going to prove Proposition~\ref{prop:u1}, which allows us to 
describe $F(v(s))$ as $s$ goes to $\pm\infty$.
 Note that,  for $|s|\gg 1$, we have
$$\Lambda(s)^{-1}\Theta_R=\frac{1}{|\sigma(s)|}\left({\rm Id} +\O_R\left({1\over
      s^2}\right)\right).$$
      We deduce  asymptotics for the operators $\Pi^\pm(s)$: for
$s\gg 1$, in ${\mathcal L}({\mathcal H}^2)$,
\begin{align}\label{Pi+asymptotic}
\Pi^+(s)\Theta_R & = \Theta_R E_1 +\frac{1}{2} \Lambda(s)^{-1}
K\Theta_R+ \O_R\left(\frac{1}{s^2}\right),\\ 
\nonumber
\Pi^-(s)\Theta_R & =   \Theta_RE_2 -\frac{1}{2} \Lambda(s)^{-1}
K\Theta_R+ \O_R\left(\frac{1}{s^2}\right), 
\end{align}
and for $s\ll -1$,  in ${\mathcal L}({\mathcal H}^2)$,\begin{align*}
\Pi^+(s)\Theta_R & =  \Theta_R E_2+\frac{1}{2} \Lambda(s)^{-1}
K\Theta_R+ \O_R\left(\frac{1}{s^2}\right),\\
\Pi^-(s)\Theta_R & =   \Theta_RE_1-\frac{1}{2} \Lambda(s)^{-1}
K\Theta_R+\O_R\left(\frac{1}{s^2}\right) ,
\end{align*}
where $K=V(0)$ has been defined in~(\ref{def:JK}) and 
$$E_1={\rm diag}(1,0)\;\;{\rm and}\;\; E_2={\rm diag}(0,1).$$
Since   $u_0=\Theta_R u_0$ yields that
for all $s\in\R$, $u(s)=\Theta_Ru(s)$ and $v(s)=\Theta_Rv(s)$ (by
Lemma~\ref{lem:localisation}), we have   
for all $R>0$,  as $s\rightarrow +\infty$,
\begin{align*}
\| v_1(s)\|_{\mathcal H}^2 &=  \|v^+(s)\|^2_{{\mathcal H}^2}+ 
\RE \< v^+(s) \;,\; \Lambda(s)^{-1} K\Theta_R
  v(s)\>_{{\mathcal H}^2}+
\O_R\left(\frac{1}{s^2}\right)\\ 
& =   \|v^+(s)\|^2_{{\mathcal H}^2}+ 
\RE \< K\Lambda(s)^{-1} \Pi^+(s)\Theta_R v(s) \;,\;
  v(s)\>_{{\mathcal H}^2}+
\O_R\left(\frac{1}{s^2}\right). 
\end{align*}
 In view of~\eqref{Pi+asymptotic} and of  the relation
 $$\| \Lambda(s)^{-1}K\Theta_R\|_{{\mathcal L}({\mathcal
     H}^2)}=\O_R(s^{-1}),$$ we have 
 $$\Pi^+(s)\Theta_R=E_1\Theta_R+\O_R(s^{-1}).$$
 This implies that we can write
\begin{align*}
K\Lambda(s)^{-1} \Pi^+(s)\Theta_R &= K\Lambda(s)^{-1} E_1\Theta_R+
\O_R\left(\frac{1}{s^2}\right)\\
& =  E_2 K\Lambda(s)^{-1} E_1\Theta_R+\O_R\left(\frac{1}{s^2}\right)\\
& = \Lambda(s)^{-1}  \Pi^-(s)
K\Theta_R\Pi^+(s)+\O_R\left(\frac{1}{s^2}\right), 
\end{align*}
where we have used $E_2K=KE_1$, $\Lambda(s)^{-1}
E_1=E_1\Lambda(s)^{-1}$ and the commutation properties of $V(s)$ and
$\Pi^\pm(s)$ with $\Lambda(s)^{-1}$ and~$\Theta_R$. We set 
\begin{equation}
  \label{eq:defg1}
  g_1(s)=\RE\< \Lambda(s)^{-1}\Pi^-(s) K \Theta_R\Pi^+(s) v(s) \;,\;
  v(s)\>_{{\mathcal H}^2}.
\end{equation}
We obtain
\begin{equation*}\label{g1etu1}
 g_1(s)=\O_R(s^{-1})\;\;{\rm and} \;\;
\| v_1(s)\|_{\mathcal H}^2 =  \|v^+(s)\|^2_{{\mathcal H}^2} + g_1(s)+\O_R\left({1\over s^2}\right).
\end{equation*}
Similarly, we obtain 
\begin{align}\label{eq:u1sinfty}
  \|v_2(s)\|^2_{{\mathcal H}}-\Omega^- & = 
  g_2(s)+\O_R\left(\frac{1}{s^2}\right) ,
  \end{align}
with $ g_2(s)  =\O_R(s^{-1})$ and,
 as~$s\rightarrow -\infty$, 
\begin{align*}
 \| v_1(s)\|^2_{{\mathcal H}}-A^-  =  \widetilde g_1(s)+
\O_R\left(\frac{1}{s^2}\right),\quad
  \| v_2(s)\|^2_{{\mathcal H}}-A^+  =  \widetilde g_2(s)+
\O_R\left(\frac{1}{s^2}\right),
  \end{align*}
with $ \widetilde g_j(s) =\O_R(|s|^{-1})$, $j=1,2$. 
This yields the first part of Proposition~\ref{prop:u1}, with
\begin{equation*}
(  \Omega_1,\Omega_2)= (\Omega^+,\Omega^-)\text{ and }(A_1,A_2)=
(A^-,A^+). 
\end{equation*}
Let us now prove that for $j\in\{1,2\}$ and $s\gg1$,
\begin{equation}\label{eq:gint}
\int_s^{+\infty} g_j(\tau)d\tau=\O_R(s^{-1} )\;\;{\rm and}
\int_{-\infty}^{-s} \widetilde 
  g_j(\tau)d\tau=\O_R(|s|^{-1}).
\end{equation}
We focus on $g_1$; the other assertions can be proved similarly. 
In that purpose, we study the operator 
$$\widetilde C(s)=\Lambda(s)^{-1}  \Pi^-(s) K\Theta_R\Pi^+(s).$$

\begin{lemma}\label{lem:dPibis}
The operators $\widetilde C(s)=\Lambda(s)^{-1}  \Pi^-(s) K\Theta_R\Pi^+(s)$ and $\widetilde B(s)={1\over 2} \Lambda(s)^{-1} \widetilde C(s)$ are antidiagonal operators of~${\mathcal A}$ which satisfy
\begin{equation}\label{eq:C(s)bis}
\IM \widetilde C(s)={\sigma(s)\over 2} \Lambda(s)^{-2}\Theta_R \begin{pmatrix} 0 & -G\\ G^* & 0 \end{pmatrix},
\end{equation}
and  $\widetilde C(s)=\O_R\left(1/s\right),\;\;\widetilde
B(s)=\O_R\left(1/s^2\right),\;\; \partial_s \widetilde
B(s)=\O_R\left(1/ s^3\right)$ in ${\mathcal L}({\mathcal H}^2)$.  
\end{lemma}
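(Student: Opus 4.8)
The strategy is to imitate the proof of Lemma~\ref{lem:dPi}, working inside the algebra ${\mathcal A}$ introduced in Section~\ref{sec:technic}. The membership and antidiagonality statements are immediate: each of $\Lambda(s)^{-1}$, $\Pi^\pm(s)$, $K=V(0)$ and $\Theta_R$ is of the form~\eqref{eq:form}, hence lies in ${\mathcal A}$, and since ${\mathcal A}$ is an algebra (Lemma~\ref{lem:form}) so do $\widetilde C(s)$ and $\widetilde B(s)=\frac12\Lambda(s)^{-1}\widetilde C(s)$. Using $(\Pi^\pm(s))^2=\Pi^\pm(s)$, $\Pi^+(s)\Pi^-(s)=0$ and the fact that $\Lambda(s)^{-1}$ commutes with $\Pi^\pm(s)$, one checks $\widetilde C(s)=\Pi^-(s)\widetilde C(s)\Pi^+(s)$ and likewise for $\widetilde B(s)$, so both are antidiagonal.

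For the formula~\eqref{eq:C(s)bis}, I would first write $K=\widetilde V(s)-\sigma(s)J$ and use $\Pi^-(s)\widetilde V(s)\Pi^+(s)=-\Lambda(s)\Pi^-(s)\Pi^+(s)=0$ to reduce to $$\widetilde C(s)=-\sigma(s)\,\Lambda(s)^{-1}\,\Pi^-(s)J\Pi^+(s)\,\Theta_R.$$ Then, substituting $\Pi^\pm(s)=\frac12({\rm Id}\pm\Lambda(s)^{-1}\widetilde V(s))$, expanding, and using the exact identities $[\widetilde V(s),J]=[K,J]=2\begin{pmatrix}0&-G\\G^*&0\end{pmatrix}$ and $\widetilde V(s)J\widetilde V(s)=\sigma(s)^2J+2\sigma(s)K+{\rm diag}(-GG^*,G^*G)$ (both already used in the proof of Lemma~\ref{lem:dPi}), together with the algebraic cancellation $\Lambda(s)^{-1}-\sigma(s)^2\Lambda(s)^{-3}=\Lambda(s)^{-3}{\rm diag}(GG^*,G^*G)$, one arrives at $$\widetilde C(s)=-\frac{\sigma(s)}{4}\Big(2\Lambda(s)^{-3}{\rm diag}(GG^*,G^*G)J-\Lambda(s)^{-2}[\widetilde V(s),J]-2\sigma(s)\Lambda(s)^{-3}K\Big)\Theta_R.$$ Among the three bracketed operators only $\Lambda(s)^{-2}[\widetilde V(s),J]$ is skew-adjoint — the other two are self-adjoint, being products of commuting self-adjoint elements of ${\mathcal A}$ — so taking skew-adjoint parts and commuting $\Theta_R$ past the antidiagonal matrix (legitimate since both lie in ${\mathcal A}$) yields $\IM\widetilde C(s)=\frac{\sigma(s)}{2}\Lambda(s)^{-2}\Theta_R\begin{pmatrix}0&-G\\G^*&0\end{pmatrix}$, i.e.~\eqref{eq:C(s)bis}.

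The size estimates then follow by inspection. On the range of $\Theta_R$ one has $\Lambda(s)^{-1}\Theta_R=\O_R(1/|\sigma(s)|)$ and $K\Theta_R=\O_R(1)$, while $\dot\sigma(s)=\O(R)$ by Lemma~\ref{lem:sigma} and $|\sigma(s)|\sim|s|$ for $|s|\gg1$ because $m(u(s))$ is bounded. Reading off the displayed expression for $\widetilde C(s)$ gives $\widetilde C(s)=\O_R(1/s)$, the first term being even $\O_R(1/s^2)$, and hence $\widetilde B(s)=\frac12\Lambda(s)^{-1}\widetilde C(s)=\O_R(1/s^2)$. For $\partial_s\widetilde B(s)$ one notes that only $\sigma(s)$ and $\Lambda(s)$ depend on $s$, with $\partial_s\Lambda(s)^{-k}=-k\,\sigma(s)\dot\sigma(s)\Lambda(s)^{-k-2}$; differentiating a term $\sigma(s)^a\Lambda(s)^{-b}T\,\Theta_R$ with $T$ independent of $s$ therefore produces terms of the same size multiplied by $\dot\sigma(s)/\sigma(s)=\O_R(1/s)$, so $\partial_s\widetilde B(s)=\O_R(1/s^3)$. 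No $\ddot\sigma$ appears, since $\widetilde B$ is differentiated only once.

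Everything here is routine; the only delicate point is the bookkeeping of the cutoff $\Theta_R$ — it must be positioned, using that it commutes with every element of ${\mathcal A}$, so that the powers of $\Lambda(s)^{-1}$ act on its range and contribute the stated decay — together with the cancellation $\Lambda(s)^{-1}-\sigma(s)^2\Lambda(s)^{-3}=\Lambda(s)^{-3}{\rm diag}(GG^*,G^*G)$, which is precisely what upgrades the naive $\O_R(1)$ bound on $\widetilde C(s)$ to $\O_R(1/s)$.
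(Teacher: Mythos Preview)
Your proof is correct and follows essentially the same route as the paper's: an explicit expansion of $\Pi^-(s)\,\cdot\,\Pi^+(s)$ inside the algebra~$\mathcal A$, followed by identification of the skew-adjoint part and direct size estimates via $\Lambda(s)^{-1}\Theta_R=\O_R(1/s)$ and Lemma~\ref{lem:sigma}. The only (minor) difference is your preliminary reduction: you write $K=\widetilde V(s)-\sigma(s)J$ and use $\Pi^-(s)\widetilde V(s)\Pi^+(s)=0$ to reduce to the computation of $\Pi^-(s)J\Pi^+(s)$ already carried out in Lemma~\ref{lem:dPi}, whereas the paper expands $\Pi^-(s)K\Pi^+(s)$ directly, computing $[K,\widetilde V(s)]$ and $\widetilde V(s)K\widetilde V(s)$ from scratch. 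Your shortcut is a nice economy but does not change the substance of the argument.
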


Then the proof is straightforward: by
Lemma~\ref{lem:offdiag}, we write
$$
g_1(s)=\RE\left(
\frac{1}{i} \frac{d}{ds} \<  \widetilde B(s) 
v(s)\;,\;v(s)\>_{{\mathcal H}^2 }\right)+\RE\left(i \<\partial_s \widetilde B(s)  v(s)\;,\;v(s)\>_{{\mathcal H}^2}\right),
$$
and \eqref{eq:gint} follows.
It remains to prove Lemma~\ref{lem:dPibis}.

\begin{proof}[Proof of Lemma~\ref{lem:dPibis}]
 We write 
 $$\widetilde C(s)={1\over 4} \Lambda(s)^{-1} \(K+\Lambda(s)^{-1} [K,\widetilde V(s)]-\Lambda(s)^{-2} \widetilde V(s)K\widetilde V(s)\)\Theta_R.$$
 In view of 
 $$[K,\widetilde V(s)]=2\sigma(s) \begin{pmatrix} 0 & -G \\ G^* &
   0\end{pmatrix}, $$
 and 
 $$\widetilde V(s)K\widetilde V(s)=-\sigma(s)^2K+2\sigma(s)\,{\rm diag}(GG^*,-G^*G)+ \begin{pmatrix} 0 & GG^*G \\ G^*GG^* & 0\end{pmatrix},$$
 we obtain 
 $$\displaylines{
 \widetilde C(s)={1\over 4} \Lambda(s)^{-1} \Bigl( -2 \sigma(s)\Lambda(s)^{-2} {\rm diag}(GG^*,-G^*G) +(1+\sigma(s)^2\Lambda(s)^{-2})K \hfill\cr\hfill
 - \Lambda(s)^{-2} 
 \begin{pmatrix} 0 & GG^*G \\ G^*G G^* & 0 \end{pmatrix} +2\sigma(s) \Lambda(s)^{-1} 
 \begin{pmatrix} 0 & -G \\ G^* & 0 \end{pmatrix}\Bigr) \Theta_R\cr}$$
 hence~(\ref{eq:C(s)bis}) and the other properties of the lemma follow from the observation $\|Ku(s)\|_{{\mathcal H}^2}=\O(R)$ and from Lemma~\ref{lem:sigma}.
\end{proof}

\subsection{Proof of Proposition~\ref{prop:scatv}}
\label{sec:prscat}

\begin{proof}
In view of the preceding computations, we have
\begin{equation*}
  m(v(s))=m(\omega) +g(s) +\O_R\(s^{-2}\),
\end{equation*}
where $\omega\in \mathcal H^2$ is such that $F_j(\omega)
=f_j(\Omega^+,\Omega_-)$,  with
\begin{equation}\label{eq:asymg}
  g(s)=\O_R(s^{-1} ) \;\;{\rm and} \;\; \int_s^{+\infty}
    g(\tau)d\tau =\O_R(s^{-1}). 
\end{equation}
Set $t=s+m(\omega)$ and $\check v(t)= v(s)$: $\check v$
solves
\begin{equation}\label{eq:checku}
  -i\d_t \check v = V\( t +
  m\(v(t-m(\omega))\)-m(\omega)\)\check v= V\(t+\check
  g(t)\)\check v, 
\end{equation}
where $\check g$ depends on $v$ and satisfies the same asymptotic estimates
\eqref{eq:asymg} as $g$. 
Consider the modified asymptotic phase
\begin{equation*}
  \check \varphi (t,\lambda) = \frac{t^2}{2}+\frac{\lambda }{2}\ln|t|
  -\int_t^{+\infty} \check g(\tau)d\tau,
\end{equation*}
and set, for $n\in \N$, 
\begin{equation*}
  \check v_{\rm app}^n(t)= \( {\rm e}^{i \check\varphi (t,GG^*)} \nu_1^n
  -\frac{G}{2t} {\rm e}^{-i\check \varphi (t,G^*G)}\nu_2^n,{\rm
    e}^{-i\check \varphi (t,G^*G)}\nu_2^n + \frac{G^*}{2t} {\rm e}^{i
    \check\varphi (t,GG^*)} \nu_1^n\), 
\end{equation*}
where $\nu_1^n$ and $\nu_2^n$ are to be fixed later. We compute
\begin{align*}
  -i\d_t \check v_{1,\rm app}^n & = \(t+\check g(t)\)  \check v_{1,\rm
    app}^n + G  \check v_{2,\rm app}^n +\O_R\( t^{-2}\),\\
-i\d_t \check v_{2,\rm app}^n & = -\(t+\check g(t)\)  \check v_{2,\rm
    app}^n + G^*  \check v_{1,\rm app}^n +\O_R\( t^{-2}\),
\end{align*}
hence
\begin{equation}\label{eq:checkuapp}
  -i\d_t \check v_{\rm app}^n =  V\(t+\check
  g(t)\)\check v_{\rm app}^n+\O_R\( t^{-2}\). 
\end{equation}
Now we fix $\nu_j^n$ so that $\check v_{\rm app}^n$ is close to $\check
v$ for $t=n$:
\begin{equation*}
  \nu_1^n = {\rm e}^{-i\check \varphi (n,GG^*)} \check v_1(n)\quad ;\quad 
\nu_2^n = {\rm e}^{i\check \varphi (n,G^*G)} \check v_2(n),
\end{equation*}
whence $\| \check v(n) -  \check v_{\rm app}^n(n)\|_{\mathcal
  H^2}=\O_R(n^{-1})$. Subtracting \eqref{eq:checkuapp} from
\eqref{eq:checku}, the energy estimate yields 
\begin{equation*}
\frac{d}{dt}\|\check v(t) -  \check v_{\rm app}^n  (t)\|_{\mathcal
  H^2} = \O_R\( t^{-2}\).
\end{equation*}
We infer, for $t\ge n$,
\begin{equation*}
  \|\check v(t) -  \check v_{\rm app}^n  (t)\|_{\mathcal
  H^2}=\O_R\( n^{-1}\),
\end{equation*}
and
\begin{equation*}
\check v_1(t) = {\rm e}^{i \check\varphi (t,GG^*)} \nu_1^n +
\O_R\(\frac{1}{t} +\frac{1}{n}\);\quad
\check v_2(t) = {\rm e}^{-i \check\varphi (t,G^* G)} \nu_2^n +
\O_R\(\frac{1}{t} +\frac{1}{n}\). 
\end{equation*}
Back to $v$, this information yields, since $v(s) = \check
v(s+m(\omega))$,
\begin{align*}
 v_1(s) &= {\rm e}^{i \varphi (s+m(\omega),GG^*) }\nu_1^n +
\O_R\(\frac{1}{s} +\frac{1}{n}\),\\ 
 v_2(s) &= {\rm e}^{-i \varphi (s+m(\omega),G^* G)} \nu_2^n +
\O_R\(\frac{1}{s} +\frac{1}{n}\),
\end{align*}
where we have used $\check\varphi(s,\lambda)=\varphi(s,\lambda)+\O_\lambda(1/s)$ by Proposition~\ref{prop:u1}.
We infer in particular
\begin{align*}
  \left| \|v_1(s)\|_{\mathcal H}^2  - \|\nu_1^n\|_{\mathcal
      H}^2\right| +  \left| \|v_2(s)\|_{\mathcal H}^2  - \|\nu_2^n\|_{\mathcal
      H}^2\right| \le C_R\(\frac{1}{n}+\frac{1}{s}\).
\end{align*}
Taking the $\limsup$ as $s\to +\infty$ yields, in view of
Proposition~\ref{prop:u+} and \eqref{g1etu1},
\begin{equation*}
  \left|\Omega^+- \|\nu_1^n\|_{\mathcal H}^2\right| + 
\left|\Omega^-- \|\nu_2^n\|_{\mathcal H}^2\right|=\O_R\(n^{-1}\).
\end{equation*}
Since the unit ball of ${\mathcal H}$ is compact, one can
extract a converging subsequence of $\nu^n_j$, whose limit $\nu_j$
satisfies $\|\nu_1\|^2_{\mathcal H}=\Omega^+
$ and $\|\nu_2\|^2_{\mathcal H}=\Omega^-$, as in Proposition~\ref{prop:scatv}. 
Uniqueness is straightforward. A similar argument allows us to define the
scattering states as $s\rightarrow -\infty$. 
\end{proof}

\section{Existence of wave operators}
\label{sec:waveop}

Given an asymptotic state $\omega=(\omega_1,\omega_2)\in {\mathcal
  H}^2$ such that $\omega=\Theta_R \omega$, consider the expected
asymptotic solution $u_{\rm app}$ provided by Theorem~\ref{theo:scat},
appearing in the statement of Proposition~\ref{prop:waveop} (up to the
factor $\phi$):
\begin{equation*}
  u_{1,{\rm app}}(s) = {\rm e}^{is\delta
    F_1(\omega)+i\varphi(s,GG^*)}\omega_1;\quad
u_{2,{\rm app}}(s) = {\rm e}^{is\delta
    F_2(\omega)-i\varphi(s,G^*G)}\omega_2.
\end{equation*}
For $n\in \N$, let $u^n=\Theta_R u^n$ be the solution to
\eqref{eq:NLLZ2} such that 
\begin{equation}\label{eq:unCI}
  u^n_{\mid s=n} = u_{{\rm app}\mid s=n}. 
\end{equation}
In view of Lemma~\ref{lem:existence}, for all $n\in \N$, $u^n$ is
defined globally, and $u^n\in C(\R;{\mathcal H}^2)$. Its initial value
$u^n_{\mid s=0}$ is a sequence of the unit ball (in view of
\eqref{eq:conserv}) of ${\mathcal H}^2$. Since the dimension of
$\mathcal H$ is finite, up to extracting a subsequence, we may assume
that 
\begin{equation*}
  u^n_{\mid s=0}\Tend n \infty {\uu}_0 \quad \text{in }{\mathcal H}^2. 
\end{equation*}
We denote by ${\uu}$ its evolution under \eqref{eq:NLLZ2}. 
Theorem~\ref{theo:scat} provides a unique $\widetilde
\omega\in {\mathcal H}^2$ such that, as $s\to +\infty$,
\begin{equation}
  \label{eq:uuasym}
  \begin{aligned}
   \uu_1(s) &=  {\rm e}^{is\delta F_1(\widetilde\omega) +
  i\varphi(s,GG^*)}\widetilde\omega _1+\O_R\(\frac{1}{s}\),\\
 \uu_2(s)&={\rm e}^{is\delta F_2(\widetilde\omega)
   -i\varphi(s,G^*G)}\widetilde\omega_2+\O_R\(\frac{1}{s}\). 
  \end{aligned}
\end{equation}
In the same spirit as to prove the existence of scattering states, we
first study the norms. 
\begin{lemma}\label{lem:normwaveop}
   We have $\|\omega_j\|_{\mathcal H}= \|\widetilde
   \omega_j\|_{\mathcal H}$, $j=1,2$. 
 \end{lemma}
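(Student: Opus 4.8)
Lemma~\ref{lem:normwaveop} asserts that passing to the limit in the approximating sequence $u^n$ preserves the prescribed asymptotic norms. The plan is to exploit the conservation of the total mass together with Proposition~\ref{prop:u1} applied to each $u^n$, and then to track how the limits behave as $n\to\infty$.

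First I would record what we know for the approximating sequence. Each $u^n$ satisfies \eqref{eq:NLLZ2} with $u^n=\Theta_R u^n$ and $u^n(n)=u_{\rm app}(n)$, so in particular $\|u^n_1(n)\|_{\mathcal H}^2=\|\omega_1\|_{\mathcal H}^2$ and $\|u^n_2(n)\|_{\mathcal H}^2=\|\omega_2\|_{\mathcal H}^2$. Applying Proposition~\ref{prop:u1} to $u^n$ (the constants there depend only on $R$, hence are uniform in $n$), there exist $\Omega^n=(\Omega_1^n,\Omega_2^n)$ with $\|u^n_j(s)\|_{\mathcal H}^2=\Omega_j^n+\O_R(1/s)$ as $s\to+\infty$, the implied constant independent of $n$. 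Evaluating at $s=n$ gives $|\Omega_j^n-\|\omega_j\|_{\mathcal H}^2|=\O_R(1/n)$, so $\Omega_j^n\to\|\omega_j\|_{\mathcal H}^2$ as $n\to\infty$.

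Next I would relate $\Omega_j^n$ to the limiting solution $\uu$. Fix $s>0$. Since $u^n(0)\to\uu_0$ in ${\mathcal H}^2$ and the flow of \eqref{eq:NLLZ2} depends continuously on the initial datum on bounded time intervals (by the Duhamel formulation and Gronwall, exactly as in the proof of Lemma~\ref{lem:existence}), we get $u^n(s)\to\uu(s)$ in ${\mathcal H}^2$ for each fixed $s$, hence $\|u^n_j(s)\|_{\mathcal H}^2\to\|\uu_j(s)\|_{\mathcal H}^2$. Combining with the uniform bound $\|u^n_j(s)\|_{\mathcal H}^2=\Omega_j^n+\O_R(1/s)$ and letting $n\to\infty$ yields $\bigl|\,\|\uu_j(s)\|_{\mathcal H}^2-\|\omega_j\|_{\mathcal H}^2\,\bigr|\le C_R/s$ for all $s>0$. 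On the other hand, Theorem~\ref{theo:scat} (or Proposition~\ref{prop:u1}) applied to $\uu$ gives $\|\uu_j(s)\|_{\mathcal H}^2\to\|\widetilde\omega_j\|_{\mathcal H}^2$ as $s\to+\infty$; comparing the two, we conclude $\|\widetilde\omega_j\|_{\mathcal H}^2=\|\omega_j\|_{\mathcal H}^2$, i.e. $\|\widetilde\omega_j\|_{\mathcal H}=\|\omega_j\|_{\mathcal H}$ for $j=1,2$.

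The step I expect to require the most care is the uniformity in $n$ of the remainder $\O_R(1/s)$ in Proposition~\ref{prop:u1}: one must check that the constant $C_R$ there genuinely depends only on $R$ (through $\Theta_R u^n=u^n$ and the a priori bound \eqref{eq:conserv}) and not on the particular trajectory, so that evaluating at $s=n$ is legitimate and the estimate survives the passage $n\to\infty$. Granting that — which is how the proposition is stated and proved in Section~\ref{sec:scat}, all bounds being expressed via the localization parameter $R$ — the argument is a clean diagonal limit, and no separate treatment of the oscillatory phases is needed since only the norms enter.
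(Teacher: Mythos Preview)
Your proof is correct and follows essentially the same route as the paper's: uniform-in-$n$ application of the $\O_R(1/s)$ estimate from Proposition~\ref{prop:u1} to each $u^n$, evaluation at $s=n$ to pin down $\Omega_j^n\to\|\omega_j\|_{\mathcal H}^2$, and continuous dependence on compact time intervals (via Gronwall) to pass to $\uu$. The only cosmetic difference is that the paper phrases the final comparison by contradiction, whereas you argue directly; the substance and the key uniformity you flag are identical.
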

 \begin{proof}
   Suppose that the result were not true, say $\|\omega_1\|_{\mathcal
     H}\not = \|\widetilde   \omega_1\|_{\mathcal H}$. We have seen in
   Section~\ref{sec:scat} that there exists $C_R$ independent of $u$
   such that
   \begin{equation*}
     \left| \|\uu_1(s)\|_{\mathcal H} - \|\widetilde
       \omega_1\|_{\mathcal H}\right|\le \frac{C_R}{s},\quad
     \forall s\ge 1.
   \end{equation*}
By assumption, there exists $\eps>0$ and $s_0>11$ such that
\begin{equation*}
  \left| \|\uu_1(s)\|_{\mathcal H} - \|
       \omega_1\|_{\mathcal H}\right|\ge \eps,\quad
     \forall s\ge s_0.
\end{equation*}
Form Theorem~\ref{theo:scat}, for all $n\in \N$, there exists
$\omega^n\in {\mathcal H}^2$ such that
\begin{equation}
  \label{eq:unfort}
  \begin{aligned}
     u_1^n(s)-{\rm e}^{is\delta F_1(\omega^n) +
  i\varphi(s,GG^*)}\omega^n_1 &= \O_R\(\frac{1}{s}\), \\
u_2^n(s)-{\rm e}^{is\delta F_2(\omega^n) -
  i\varphi(s,GG^*)}\omega^n_2 &= \O_R\(\frac{1}{s}\), 
  \end{aligned}
\end{equation}
hence
\begin{equation*}
     \left| \|u_1^n(s)\|_{\mathcal H} - \|
       \omega_1^n\|_{\mathcal H}\right|\le \frac{C_R}{s},\quad
     \forall s\ge 1,
   \end{equation*}
where $C_R$ does not depend on $n$. This yields
\begin{equation*}
  \|\omega_1^n\|_{\mathcal H}= \|\omega_1\|_{\mathcal
    H}+\O_R\(\frac{1}{n}\),
\end{equation*}
and there exists $C_R$ such that
\begin{equation}\label{eq:ineqwaveop}
 \left| \|u_1^n(s)\|_{\mathcal H} - \|
       \omega_1\|_{\mathcal H}\right|\le
     C_R\(\frac{1}{s}+\frac{1}{n}\),  \quad     \forall s\ge 1.
\end{equation}
Fix $s\ge s_0$ such that $CR^2/s<\eps/2$. Notice that \eqref{eq:NLLZ2} is
locally well-posed, uniformly on all compact intervals. Indeed, if
$u_a$ and $u_b$ are two solutions of \eqref{eq:NLLZ2}, subtracting the
two equations, using \eqref{eq:conserv} and the energy estimate for
$u_a-u_b$, Gronwall lemma yields, for some universal constant $C$,
\begin{equation*}
  \|u_a(s)-u_b(s)\|_{{\mathcal H}^2}\le
  \|u_a(s_0)-u_b(s_0)\|_{{\mathcal H}^2}{\rm e}^{C|s-s_0|}. 
\end{equation*}
In particular, $u_1^n(s)\to \uu_1(s)$
as $n\to \infty$ (recall that $s$ is fixed).  Taking the $\limsup$ in
$n$ in \eqref{eq:ineqwaveop} yields, 
\begin{equation*}
  \left| \|\uu_1(s)\|_{\mathcal H} - \|
       \omega_1\|_{\mathcal H}\right|\le
     \frac{C_R}{s}\le \frac{\eps}{2},
\end{equation*}
hence a contradiction. 
 \end{proof}
To conclude, we go back to \eqref{eq:unfort} which yields, for $s=n$
and in view of \eqref{eq:unCI},
\begin{align*}
 & \left\| {\rm e}^{in\delta F_1(\omega) }\omega_1 - {\rm e}^{in\delta
     F_1(\omega^n)}\omega^n_1\right\|_{\mathcal H} =
\O_R\(\frac{1}{n}\),\\
& \left\| {\rm e}^{in\delta F_2(\omega) }\omega_2 - {\rm
    e}^{in\delta F_2(\omega^n)}\omega^n_2\right\|_{\mathcal H} =
\O_R\(\frac{1}{n}\).
\end{align*}
Up to extracting a subsequence again, we may assume that
\begin{equation*}
 {\rm e}^{in\delta \(F_1(\omega) -F_1(\omega^n)\)}\Tend n \infty  {\rm
   e}^{i\phi_1}\quad\text{and}\quad
{\rm e}^{in\delta \(F_2(\omega) -F_2(\omega^n)\)}\Tend n \infty  {\rm
   e}^{i\phi_2},
\end{equation*}
for some $\phi_1,\phi_2\in [0,2\pi)$. We infer 
\begin{equation}\label{eq:appshift}
  \widetilde \omega_1 = {\rm e}^{i\phi_1}\omega_1\quad\text{and}\quad
\widetilde \omega_2 = {\rm e}^{i\phi_2}\omega_2.
\end{equation}
Set
\begin{equation*}
  \phi =\frac{\phi_1-\phi_2}{2},\quad \Phi =
  \frac{\phi_1+\phi_2}{2}. 
\end{equation*}
We see that by construction, $\phi=0$ if $F_1=F_2$. Let
\begin{equation*}
  u(s) = {\rm e}^{-i \Phi} \uu(s). 
\end{equation*}
Since $F$ is gauge invariant, $u$ solves \eqref{eq:NLLZ2} with
$u_0={\rm e}^{-i \Phi} \uu_0$. In view of \eqref{eq:uuasym},
\eqref{eq:appshift} and the definition of $\Phi$, we also have
\begin{align*}
 u_1(s) &=  {\rm e}^{is\delta F_1(\widetilde\omega) +
  i\varphi(s,GG^*)-i\Phi}\widetilde\omega _1+\O_R\(\frac{1}{s}\)
= {\rm e}^{is\delta F_1(\omega) +
  i\varphi(s,GG^*)+i\phi}\omega _1+\O_R\(\frac{1}{s}\),\\
 u_2(s)&={\rm e}^{is\delta F_2(\widetilde\omega)
   -i\varphi(s,G^*G)-i\Phi}\widetilde\omega_2+\O_R\(\frac{1}{s}\)
={\rm e}^{is\delta F_2(\omega)
   -i\varphi(s,G^*G)-i\phi}\omega_2+\O_R\(\frac{1}{s}\),
\end{align*}
hence the result. Finally, uniqueness for such a solution $u$ for a
fixed $\phi$ stems from Theorem~\ref{theo:scat}.


\section{Analysis of the Scattering Operator}
\label{sec:dev}

We now prove Theorem~\ref{theo:dev} and we take into account the
dependence of the nonlinear term with respect to the
parameter~$\delta$. We begin with the particular case $F_1=F_2$, which
turns out to be fairly easy, and then turn to the general case.

\subsection{The case $F_1=F_2=\underline F$}
\label{sec:scatscal}

Recall that we denote by $u^{\rm lin}$ the solution to 
\begin{equation*}
  \frac{1}{i}\d_s u^{\rm lin} = V(s)u^{\rm lin};\quad u^{\rm
    lin}_{\mid s=0}=u_{\mid s=0}=u_0. 
\end{equation*}
In view of the identity
\begin{align}
\label{eq:u/ulin}  
u(s)&=\exp\( i\delta \int_0^s \underline F(u(\tau))d\tau\) u^{\rm
    lin}\\
&= \exp\(
i\delta\int_0^s(\underline F(u(\tau))-\underline F(u(\omega)))d\tau
+is\delta \underline F(\omega)\)u^{\rm lin} ,\notag
\end{align}
Theorem~\ref{theo:scat} yields
$$\omega  =  \exp\(i\delta\int_0^{+\infty} \left(\underline
  F(u(\tau))-\underline F(\omega)\right)d\tau\) \omega^{\rm lin}.$$ 
Similarly,
$$\alpha  =   \, \exp\(i\delta \int_0^{-\infty} \left(\underline
  F(u(\tau))-\underline F(\alpha)\right)d\tau\)\alpha^{\rm lin}.$$ 
In particular, we have  $\underline F(\omega)=\underline F(
\omega^{\rm lin})$ and $\underline F(\alpha)=\underline F(\alpha^{\rm
  lin})$ since  
\begin{equation}\label{egalitedesmodules}
\|\omega_j\|_{\mathcal H}=\|\omega_j^{\rm lin}\|_{\mathcal H}\;\;{\rm and}\;\; \|\alpha_j\|_{\mathcal H}=\|\alpha_j^{\rm lin}\|_{\mathcal H}\;\;{\rm for}\;\; j\in\{1,2\}.
\end{equation}
Besides, in view of \eqref{eq:u/ulin}, we obtain
 $\underline F(u)=\underline F(u^{\rm lin})$. We deduce 
\begin{align*}
\omega & =   \exp\(i\delta\int_0^{+\infty}
\left(\underline F(u^{\rm lin}(\tau))-\underline F(\omega^{\rm lin})\right)d\tau\)
\omega^{\rm lin}=
{\rm e}^{i\delta \Lambda^+}\omega^{\rm lin} ,\\ 
\alpha & = \exp\(i\delta \int_0^{-\infty}
\left(\underline F(u^{\rm lin}(\tau))-\underline F(\alpha^{\rm
    lin})\right)d\tau\) \alpha^{\rm lin} = 
 {\rm e}^{-i\delta \Lambda^-}\alpha^{\rm lin},
\end{align*}
where $\Lambda^+ =\int_0^{+\infty}
\left(F(u^{\rm lin}(\tau))-F(\omega^{\rm lin})\right)d\tau$ and 
$\Lambda^-= \int_{-\infty}^0
\left(F(u^{\rm lin}(\tau))-F(\alpha^{\rm lin})\right)d\tau$.
This yields the exact formula \eqref{eq:scatexpl}.

We readily infer the following expansion
 $$S_\delta=S^{\rm lin}+i \delta\, \left(\Lambda^++ \Lambda^-\right) S^{\rm lin}
 +\O(\delta^2\|\Lambda^+\|^2) +\O(\delta^2\|\Lambda^-\|^2).$$  
It remains to obtain an upper bound for $\Lambda^\pm$. We write
$$\Lambda^+=\int_0^{s_0}
\left(\underline F(u(s))-\underline F(\omega)\right)ds+
\int_{s_0}^{+\infty}\left(\underline F(u(s))-\underline F(\omega)\right)ds.$$ 
By the continuity of the functions $f$ (defined in~(\ref{def:f})) and
because $\|u_1\|^2_{\mathcal 
  H}+\|u_2\|^2_{\mathcal H}=\|\omega_1\|^2_{\mathcal
  H}+\|\omega_2\|^2_{\mathcal H}=1$,  we have
$$\left|\int_0^{s_0} \left(\underline F(u(s))-\underline
    F(\omega)\right)ds\right|\le 
2\left(\sup_{{\mathbf S}^1} |f|\right)\, s_0.$$ 
 Proposition~\ref{prop:u1}  yields a constant $C_R$ such that, 
\begin{equation}\label{eq:k=1}
\left|\int_{s_0}^{+\infty}
  \left(\underline F(u(s,z))-\underline
    F(\omega(z))\right)ds\right|\le C_Rs_0^{-1} . 
\end{equation}
We infer $\Lambda^+ =\O_R(1)$, hence Theorem~\ref{theo:dev} in
the case $F_1=F_2$.

\subsection{The general case}
\label{sec:scatgen}

Following for instance \cite{PG96} (see also \cite{CaGa09} for some
generalizations), the most natural method to obtain an asymptotic
expansion for $S_\delta^\phi$ should consist in decomposing the
nonlinear solution $u$ as
\begin{equation*}
  u = u^{(0)} +\delta r^\delta,
\end{equation*}
where $u^{(0)}$ solves the linear equation \eqref{eq:LZ} and behaves
like $u$ as $s\to -\infty$ (up to $\O(\delta)$). The goal would then
be to prove that the remainder $r^\delta$ is uniformly bounded for
$\delta\in [-\delta_0,\delta_0]$ for some $\delta_0>0$ possibly
small. However, if suitable in the case of dispersive equations (as in
\cite{PG96,CaGa09}), this approach does not seem to be successful in
the present framework, because the equation satisfied by the remainder
$r^\delta$ involves terms whose integrability is rather
delicate. Therefore, the approach that we present follows a different
path, and highly relies on structural properties related to
\eqref{eq:NLLZ2}. 
\smallbreak

Let $\alpha\in {\mathcal H}^2$, and $\phi$ provided by
Proposition~\ref{prop:waveop}. Proposition~\ref{prop:waveop} yields a
global solution $u$ to \eqref{eq:NLLZ2}, and Theorem~\ref{theo:scat}
provides an asymptotic state $\omega$ for $s\to +\infty$. Instead of
considering directly $u$, 
we will work with the function $v$ defined in~\eqref{eq:v} and
satisfying~\eqref{syst:v}. 
In terms of $v$, Proposition~\ref{prop:waveop} reads
\begin{align*}
 & v_1(s) = \ee^{is\delta F_1(\alpha) +i\varphi(s,GG^*)
    -isM(\alpha)+i\int_{-\infty}^0 \(
    M(u(\tau))-M(\alpha)\)d\tau+i\phi}\alpha_1 +\O_R\(\frac{1}{|s|}\),\\
& v_2(s) = \ee^{is\delta F_2(\alpha) -i\varphi(s,G^*G)
    -isM(\alpha)+i\int_{-\infty}^0 \(
    M(u(\tau))-M(\alpha)\)d\tau-i\phi}\alpha_2 +\O_R\(\frac{1}{|s|}\).
\end{align*}
In view of the expression of $\varphi$ and of the definition of $m$
and $M$, we can also write
\begin{align*}
 & v_1(s) = \ee^{i\varphi(s+m(\alpha),GG^*)
    +i\int_{-\infty}^0 \(
    M(u(\tau))-M(\alpha)\)d\tau+i\phi-im(\alpha)^2/2}\alpha_1
  +\O_R\(\frac{1}{|s|}\),\\ 
& v_2(s) = \ee^{-i\varphi(s+m(\alpha),G^*G)
    +i\int_{-\infty}^0 \(
    M(u(\tau))-M(\alpha)\)d\tau-i\phi+im(\alpha)^2/2}\alpha_2
  +\O_R\(\frac{1}{|s|}\). 
\end{align*}
Proposition~\ref{prop:waveop} in the \emph{linear} case $F_1=F_2=0$ 
yields a unique solution $v^{\rm lin}$ to 
\begin{equation*}
  \frac{1}{i}\d_s v^{\rm lin} = V\(s\)v^{\rm lin},
\end{equation*}
and such that, as $s\to -\infty$,
\begin{equation*}
  v_1^{\rm lin}(s) = \ee^{i\varphi(s,GG^*)}\alpha_1
  +\O_R\(\frac{1}{|s|}\),\quad 
v_2^{\rm lin}(s) = \ee^{-i\varphi(s,G^*G)}\alpha_2
  +\O_R\(\frac{1}{|s|}\). 
\end{equation*}
Therefore, $v^{\rm lin}(0)$ is the image of $\alpha$ under the action
of the \emph{linear} wave operator, and as $s\to +\infty$, $v^{\rm
  lin}$ has an asymptotic state $\omega^{\rm lin}= S^{\rm
  lin}(\alpha)$. 
Setting $v^-(s)=v^{\rm lin}(s+m(\alpha))$, we see that
\begin{equation*}
  \frac{1}{i}\d_s v^- = V\(s+m(\alpha)\)v^-,
\end{equation*}
and 
\begin{align*}
 & v_1^-(s) = \ee^{i\varphi(s+m(\alpha),GG^*)}\alpha_1
  +\O_R\(\frac{1}{|s|}\),\\
& v_2^-(s) = \ee^{-i\varphi(s+m(\alpha),G^*G)}\alpha_2
  +\O_R\(\frac{1}{|s|}\). 
\end{align*}
We note the identity
\begin{equation*}
  \|v_1(s)-v_1^-(s)\|_{\mathcal H} = \left\lvert \ee^{i\int_{-\infty}^0 \(
    M(u(\tau))-M(\alpha)\)d\tau+i\phi-im(\alpha)^2/2}- 1\right\rvert
  \|\alpha_1\|_{\mathcal H}+ \O_R\(\frac{1}{|s|}\).
\end{equation*}
The proof of Proposition~\ref{prop:waveop} yields $\phi=\O_R(\delta)$,
so 
\begin{equation*}
  \|v_1(s)-v_1^-(s)\|_{\mathcal H} = \O_R(\delta)+ \O_R\(\frac{1}{|s|}\).
\end{equation*}
We infer
\begin{equation*}
  \left\|v_1\(\frac{-1}{\delta}\)-v_1^-\(\frac{-1}{\delta}\)\right\|_{\mathcal
    H} = \O_R(\delta). 
\end{equation*}

We now use the following lemma the proof of which is postponed to the
end of the section.  
\begin{lemma}\label{lem:corrector1}
There exists $\kappa^-(s)$ such that, as $s\to -\infty$, $\kappa^-(s)=
\O_R(\delta s^{-2})$ and 
$$
{1\over i} \partial_s \left(v-v^- +\kappa^-(s)\right)=V(s+m(\alpha))\left(v-v^-+\kappa^-(s) \right)+\O_R(\delta s^{-2}).$$
\end{lemma}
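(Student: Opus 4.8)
The idea is to write the equation for the difference $w:=v-v^-$ and to identify the "bad" term on the right-hand side, which is not integrable in time but is a total derivative plus a genuinely small remainder; the corrector $\kappa^-(s)$ is then built to absorb that total-derivative part. First I would subtract the two equations
$$
\frac 1i\d_s v = V(s+m(v(s)))v + M(v)v,\qquad \frac 1i\d_s v^- = V(s+m(\alpha))v^-,
$$
and recall from \eqref{syst:v} that the last term $M(v)v$ has already been gauged away in passing from $u$ to $v$, so in fact $v$ solves $\frac1i\d_s v = V(\sigma(s))v = V(s+m(v(s)))v$. Hence
$$
\frac 1i\d_s w = V(s+m(\alpha))w + \bigl(V(s+m(v(s)))-V(s+m(\alpha))\bigr)v = V(s+m(\alpha))w + \bigl(m(v(s))-m(\alpha)\bigr)Jv,
$$
using $V(s)=sJ+K$ so that $V(s+a)-V(s+b)=(a-b)J$. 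The source term is $\rho(s)Jv$ with $\rho(s):=m(v(s))-m(\alpha)$.

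Next I would control $\rho(s)$ as $s\to-\infty$. Proposition~\ref{prop:u1} (in its $s\to-\infty$ form, i.e. the analogue established in Section~\ref{sec:scat}) gives $\|v_j(s)\|_{\mathcal H}^2 = A_j + \O_R(|s|^{-1})$, and since $m(v(s))=\widetilde m(\|v_1(s)\|^2,\|v_2(s)\|^2)$ with $\widetilde m$ smooth with bounded derivatives, and $F_j(\alpha)=f_j(A_1,A_2)$ so that $m(\alpha)=\widetilde m(A_1,A_2)$, a Taylor expansion yields $\rho(s)=\O_R(\delta|s|^{-1})$ — the factor $\delta$ coming from the definition $m=\tfrac\delta2(F_1-F_2)$ in \eqref{def:Mm}. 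A size $\O_R(\delta|s|^{-1})$ is not integrable, so one cannot simply Gronwall; this is exactly why a corrector is needed. The key extra structure is that $Jv$ is, up to $\O_R(|s|^{-1})$, antidiagonal in the $\Pi^\pm(s)$ decomposition — more precisely $J = \Lambda(s)^{-1}\widetilde V(s) + \Lambda(s)^{-1}K = (\Pi^+(s)-\Pi^-(s)) + \Lambda(s)^{-1}K$, and the diagonal part $\Pi^+-\Pi^-$ acting on $v$ contributes an oscillatory term $e^{\pm 2i\varphi(\sigma(s),\cdot)}$ after one extracts the leading oscillation of $v$ from Proposition~\ref{prop:scatv}, which integrates (by the nonstationary phase / integration-by-parts mechanism already used in Lemmas~\ref{lem:offdiag} and \ref{lem:dPi}) to something $\O_R(\delta|s|^{-2})$. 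So one sets
$$
\kappa^-(s) = -\,i\,\rho(s)\,\Lambda(s)^{-1}\,\bigl(\Pi^+(s)-\Pi^-(s)\bigr)\,\Bigl(\tfrac{1}{2\sigma(s)}\Bigr)\cdots
$$
(the precise form obtained by solving, term by term, $\frac1i\d_s\kappa^- - V(s+m(\alpha))\kappa^- = -\rho(s)(\Pi^+-\Pi^-)v$ via the antidiagonal-to-commutator trick \eqref{eq:B}), which is $\O_R(\delta|s|^{-2})$ because $\rho=\O_R(\delta|s|^{-1})$ and $\Lambda(s)^{-1}=\O_R(|s|^{-1})$.

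Then I would verify that $w-\kappa^-$ satisfies $\frac1i\d_s(w-\kappa^-) = V(s+m(\alpha))(w-\kappa^-) + r(s)$ with $r(s)=\O_R(\delta|s|^{-2})$: the contributions to $r$ are (i) the remaining source $\rho(s)\Lambda(s)^{-1}K v$, which is $\O_R(\delta|s|^{-2})$ since $\|Kv\|_{\mathcal H^2}=\O(R)$ and $\Lambda(s)^{-1}=\O_R(|s|^{-1})$; (ii) the error from replacing $v$ by its leading oscillatory profile inside the antidiagonal term, which is $\O_R(|s|^{-1})$ times $\rho$, hence $\O_R(\delta|s|^{-2})$; and (iii) $\dot\rho(s)$ and $\d_s(\Lambda(s)^{-1})$-type terms from differentiating $\kappa^-$, controlled by Lemma~\ref{lem:sigma} and the refined bound $\int_{-\infty}^s(\|v_j\|^2-A_j)\,d\tau=\O_R(|s|^{-1})$ in Proposition~\ref{prop:u1}, which forces $\rho$ to be not merely $\O_R(\delta|s|^{-1})$ but integrable-against-a-total-derivative in the right sense. \textbf{The main obstacle} is precisely this last point: extracting from $\rho(s)Jv$ the genuine total-derivative piece and showing the leftover is $\O_R(\delta s^{-2})$, which requires combining the antidiagonal algebra of Section~\ref{sec:technic} with the asymptotics of both $v$ and $\sigma$ — i.e. checking that after subtracting $\kappa^-$ no term of size merely $\delta|s|^{-1}$ survives. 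Once Lemma~\ref{lem:corrector1} is in hand, an energy estimate on the unitary flow $V(s+m(\alpha))$ gives $\|w(s)-\kappa^-(s)\|$ bounded by its value at $s=-1/\delta$ (which is $\O_R(\delta)$ by the computation preceding the lemma) plus $\int\O_R(\delta\tau^{-2})\,d\tau=\O_R(\delta^2)$, so $\|v(s)-v^-(s)\|_{\mathcal H^2}=\O_R(\delta)$ up to $s=\O(1/\delta)$, which is what feeds the expansion $S_\delta^\phi = S^{\rm lin}+\O_R(\delta)$.
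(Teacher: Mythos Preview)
Your setup is right: the difference $w=v-v^-$ solves
\[
\tfrac1i\d_s w = V(s+m(\alpha))w + \rho(s)Jv,\qquad \rho(s)=m(v(s))-m(\alpha)=\O_R(\delta|s|^{-1}),
\]
and a corrector is needed because $\rho Jv$ is not integrable. But your proposed mechanism for building $\kappa^-$ breaks down. The decomposition you write, $J=\Lambda^{-1}\widetilde V+\Lambda^{-1}K$, is algebraically false (from $\widetilde V=\sigma J+K$ one gets $J=\sigma^{-1}\Lambda(\Pi^+-\Pi^-)-\sigma^{-1}K$), and more importantly the leading piece $\Pi^+-\Pi^-$ is \emph{diagonal} in the $\Pi^\pm$ decomposition, so the antidiagonal-to-commutator trick \eqref{eq:B} you invoke simply does not apply: $[\Lambda^{-1}(\Pi^+-\Pi^-),\widetilde V]=0$. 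With $\rho$ treated merely as a scalar of size $\O_R(\delta/|s|)$, the source $\rho(\Pi^+-\Pi^-)v$ is \emph{resonant} with the free flow (its first component behaves like $\rho\,e^{i\varphi}\mu_1$, which matches the homogeneous phase), and no corrector of size $\O_R(\delta s^{-2})$ can absorb it.

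What makes the construction work in the paper is not any antidiagonal structure of $J$ but the explicit oscillatory structure of $\rho(s)$ itself. One writes
\(
\|v_1(s)\|^2-A_1=2\,\IM\!\int_s^{-\infty}\langle v_1,Gv_2\rangle
\)
and integrates by parts twice (using $\d_s\langle v_1,Gv_2\rangle=2is\langle v_1,Gv_2\rangle+\ldots$) to obtain the pointwise expansion
\[
\rho(s)=\frac{\widehat m(\alpha)}{2s}\,\RE\bigl\langle e^{i\varphi(s+m(\alpha),GG^*)}\alpha_1,\;G\,e^{-i\varphi(s+m(\alpha),G^*G)}\alpha_2\bigr\rangle+\O_R(\delta s^{-2}),
\]
so $\rho$ oscillates like $e^{\pm is^2}$. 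Multiplying by $Jv\approx(e^{i\varphi}\mu_1,-e^{-i\varphi}\mu_2)$ produces only the non-resonant phases $e^{\pm 3is^2/2}$ and $e^{\mp is^2/2}$ (times $s^{-1}$ and slowly varying $\O_R(\delta)$ coefficients $\kappa_j,\tilde\kappa_j$). The corrector $\kappa^-$ is then written down explicitly as the matching combination of these phases divided by $2s^2$, and one checks by direct differentiation that $\tfrac1i\d_s\kappa^--V(s+m(\alpha))\kappa^-=-\rho Jv+\O_R(\delta s^{-2})$. The integral bound from Proposition~\ref{prop:u1} that you cite in~(iii) is a consequence of this oscillation, but is not by itself strong enough to produce the $\O_R(\delta s^{-2})$ size of $\kappa^-$ stated in the lemma; you need the pointwise form.
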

 By integration, we infer
$$\forall s\le - 1,\;\;v(s)=v^-(s)+\O_R(\delta).$$
\smallbreak

The Fundamental Theorem of Calculus yields the identity
\begin{equation*}
  v^{\rm lin}(s) - v^-(s) = \int_{s+m(\alpha)}^s\d_s v^{\rm
    lin}(\tau)d\tau, 
\end{equation*}
and since $\d_s v^{\rm lin}$ is locally bounded (from the equation),
we infer
\begin{equation*}
 v^{\rm lin}(-1) - v^-(-1)=\O_R\(m(\alpha)\)   =\O_R(\delta).
\end{equation*}
On the other hand, since 
\begin{equation*}
  \frac{1}{i}\d_s \(v-v^{\rm lin}\) = V(s)\(v-v^{\rm lin}\)
  +m\(u(s)\)J v,
\end{equation*}
and since $v$ is bounded, the energy identity yields
\begin{equation*}
  \frac{d}{ds}\|v-v^{\rm lin}\|_{\mathcal H^2}^2=2 \IM\< m\(u(s)\)J
  v,v-v^{\rm lin}\> =\O_R(\delta\|v-v^{\rm lin}\|_{\mathcal H^2}). 
\end{equation*}
We infer
\begin{equation*}
  v(s) = v^{\rm lin}(s)+\O_R(\delta),\quad \forall s\in [-1,1].
\end{equation*}
With $\omega$ the (nonlinear) asymptotic state associated to $u$, we define
similarly
\begin{equation*}
  v^+(s) = v^{\rm lin}\(s+m(\omega)\). 
\end{equation*}
For the same reason as above, $v(1) = v^+(1)+\O_R(\delta)$. Proceding
like for $s<0$, we can find $\kappa^+(s)\in \R$ so that
\begin{equation*}
  {1\over i} \partial_s \left(v-v^+ +\kappa^+(s)\right)=V(s+m(\omega))\left(v-v^++\kappa^+(s)\right) +
\O_R\left({\delta\over s^2}\right),
\end{equation*}
and we infer
\begin{equation}\label{eq:SlinS2}
  v(s)=v^+(s)+\O_R(\delta), \quad \forall s\ge 1.
\end{equation}
From the linear scattering theory, we have, as $s\to +\infty$:
\begin{align*}
 & v_1^+(s) = \ee^{i\varphi(s+m(\omega),GG^*)}\omega_1^{\rm lin}
  +\O_R\(\frac{1}{s}\),\\ 
& v_2^+(s) = \ee^{-i\varphi(s+m(\omega),G^*G)}\omega_2^{\rm lin}
  +\O_R\(\frac{1}{s}\), 
\end{align*}
where $\omega^{\rm lin} = S^{\rm lin}(\alpha)$, and from
Theorem~\ref{theo:scat} (see also Proposition~\ref{prop:scatv}), we also have (like in the case $s\to -\infty$):
\begin{align*}
 & v_1(s) = \ee^{i\varphi(s+m(\omega),GG^*)
    -i\int_0^{+\infty} \(
    M(u(\tau))-M(\omega)\)d\tau-im(\omega)^2/2}\omega_1
  +\O_R\(\frac{1}{s}\),\\ 
& v_2(s) = \ee^{-i\varphi(s+m(\omega),G^*G)
    -i\int_0^{+\infty} \(
    M(u(\tau))-M(\omega)\)d\tau+im(\omega)^2/2}\omega_2
  +\O_R\(\frac{1}{s}\). 
\end{align*}
In view of these asymptotic identities and of \eqref{eq:SlinS2}, we
infer
\begin{equation*}
  \omega = \omega^{\rm lin}+\O_R(\delta),
\end{equation*}
which is the identity $S=S^{\rm lin}+\O_R(\delta)$ of
Theorem~\ref{theo:dev}. 

\bigbreak

It remains to prove the intermediary result:

\begin{proof}[Proof of Lemma~\ref{lem:corrector1}]
In view of~(\ref{eq:mborne}) and~(\ref{eq:gint}), we have 
\begin{eqnarray*}
\| v_1(s)\|_{\mathcal H}^2-\| \alpha_1\|_{\mathcal H}^2=-\left(
  \|v_2(s)\|_{\mathcal H}^2-\|\alpha_2\|_{\mathcal H}^2\right) & = &
\int_s^{-\infty} 2\IM
\<v_1(\tau),Gv_2(\tau)\> d\tau.
\end{eqnarray*}
Since 
$$\partial_s\langle v_1(s),Gv_2(s)\rangle=2is\langle v_1(s),Gv_2(s)\rangle +i\left(\|Gv_2(s)\|^2_{\mathcal H}-\|Gv_1(s)\|^2_{\mathcal H}\right),$$
an integration by parts yields
$$\displaylines{\qquad
\int_s^{-\infty} \<v_1(\tau),Gv_2(\tau)\>  =  {i\over 2s} \<v_1(s),Gv_2(s)\> \hfill\cr\hfill
 +\int_s^{-\infty} \left({1\over 2i\tau^2}\<v_1(\tau),Gv_2(\tau) \> -{1\over 2\tau} \left(\| Gv_2(\tau)\|^2_{\mathcal H}-\| G^*v_1(\tau)\|^2_{\mathcal H}\right)\) d\tau,
\qquad\cr}$$
whence
$$\displaylines{\qquad
\IM \int_s^{-\infty} \<v_1(\tau),Gv_2(\tau)\>  =\IM\left(  {i\over 2s} \<v_1(s),Gv_2(s)\> \right)\hfill\cr\hfill
 +\IM \int_s^{-\infty} {1\over 2i\tau^2}\<v_1(\tau),Gv_2(\tau) \>d\tau+\O_R(s^{-2}).\qquad\cr}$$
With one more integration by parts, we obtain
\begin{align*}
2\IM \int_s^{-\infty} \<v_1(\tau),Gv_2(\tau)\> &= {1\over s} \RE \<v_1(s),Gv_2(s)\> +\O_R(s^{-2})\\
&={1\over 2s} \RE \< {\rm e}^{i\varphi(s+m(\alpha),GG^*)}\alpha_1,{\rm
  e}^{-i\varphi(s+m(\alpha),G^*G)} \alpha_2\>\\
&\quad+\O_R(s^{-2}).
\end{align*}
A Taylor expansion of $m(v(s))$ implies the existence of a scalar $\widehat m(\alpha)$ such that 
$$m(v(s))-m(\alpha)={\widehat m(\alpha) \over 2s} \RE \< {\rm
  e}^{i\varphi(s+m(\alpha),GG^*)}\alpha_1,{\rm
  e}^{-i\varphi(s+m(\alpha),G^*G)} \alpha_2\>+\O_R(\delta s^{-2}).$$ 
Therefore, there exist functions $\kappa_j(s)$ and $\tilde \kappa_j(s)$, $j\in\{1,2\}$ such that 
$$\|\kappa_j(s)\|_{\mathcal H}+\|\tilde \kappa_j(s)\|_{\mathcal H}+\|\partial_s\kappa_j(s)\|_{\mathcal H}+\|\partial_s\tilde\kappa_j(s)\|_{\mathcal H}=\O_R(\delta)$$
and
\begin{align*}
&\left(m(v(s))-m(\alpha)\right) Jv(s) = \\
&\quad {1\over s} \left( {\rm e}^{3is^2/ 2}\kappa_1(s) +{\rm e}^{-{is^2/2}}
  \tilde \kappa_1(s),  
{\rm e}^{-{3is^2/ 2}}\kappa_2(s) +{\rm e}^{{is^2/ 2}} \tilde
\kappa_2(s)\right)+\O_R(\delta s^{-2}),
\end{align*}
where $J={\rm diag}(1,-1)$ it defined in \eqref{def:JK}.
Define 
\begin{align*}
\kappa^-_1(s) & =-{1\over 2s^2}  \left({\rm e}^{3is^2/ 2}\kappa_1(s) - {\rm e}^{-{is^2/ 2}} \tilde \kappa_1(s)\right),\\
\kappa^-_2(s) & ={1\over 2s^2}  \left({\rm e}^{-3is^2/ 2}\kappa_2(s) - {\rm e}^{{is^2/ 2}} \tilde \kappa_2(s)\right).
\end{align*}
Then the function $\kappa^-(s)=\(\kappa_1^-(s),\kappa_2^-(s)\)$ satisfies
$${1\over i }\partial_s\kappa^-(s) -V\(s+m(\alpha)\)\kappa^-(s)= -\left(m(v(s))-m(\alpha)\right) Jv(s) +\O_R(\delta s^{-2}).$$
On the other hand, we have by definition
\begin{align*}
  \frac{1}{i}\d_s(v-v^-) &= V\(s+m(v(s))\)v -V(s+m(\alpha))v^-\\
&=
  V(s+m(\alpha))(v-v^-) +\(m(v)-m(\alpha)\)Jv.
\end{align*}
The lemma follows by summing the last two identities.
\end{proof}

\appendix

\section{Rigorous derivation for a double-well potential}
\label{sec:justif}

\subsection{Mathematical framework}
\label{sec:framework}

We give more details concerning the derivation of \eqref{eq:NLLZ2} in
the case of a 
condensate in a double well. This is achieved by adapting the approach
from \cite{Sa05}, from which several intermediary results are
borrowed (see also \cite{BaSa07} for some refinements  to \cite{Sa05}
in the confining, one-dimensional case). We shall
derive the model 
\eqref{eq:double-well-env} as an 
envelope equation in the semi-classical limit. Rewrite
\eqref{eq:nlswell} in the presence of physical constants:
\begin{equation}
  \label{eq:double-well-semi}
  i\hbar \frac{\d \psi^\hbar}{\d t} +\frac{\hbar^2}{2}\Delta
  \psi^\hbar = V^\hbar(t,x) 
  \psi^\hbar + \epsilon (\hbar)|\psi^\hbar|^2\psi^\hbar ,
\end{equation}
where $\epsilon (\hbar)$ is a coupling constant whose value will be
discussed later on.
Note that we consider a slightly more general framework than in
Section~\ref{sec:deriv}: $x\in \R^d$, with $d\ge 1$. We assume that
$V^\hbar(t,x)=V_s(x)+\kappa(\hbar)t V_a(x)$, with $V_s$ and $V_a$
independent of $\hbar$. 

\smallbreak

We first describe the assumptions performed on the potential $V_s$ and
discuss the first consequences. 

\begin{hyp}[Symmetric potential]\label{hyp:Vs}
  The potential $V_s\in {\mathcal C}^\infty(\R^d)$ is a smooth real-valued
  function such that: 
  \begin{enumerate}
 \item The potential $V_s$ is at most quadratic,
  \begin{equation*}
    \d^\alpha V_s\in L^\infty(\R^d),\quad \forall |\alpha|\ge 2.
  \end{equation*}
 \item $V_s$ is symmetric with respect to the first coordinate:
    \begin{equation*}
      V_s(-x_1,x_2,\dots,x_d)= V_s(x_1,x_2,\dots,x_d),\quad \forall
      x\in \R^d.
    \end{equation*}
\item $V_s$ admits two minima at $x=x_\pm$, where $x_-$ and $x_+$ are
  distinct and symmetric with respect to the first axis. Moreover,
  \begin{equation*}
    V_s(x)>V_s^{\rm min}= V(x_\pm),\quad \forall x\in \R^d,\ x\not=
    x_{\pm},
  \end{equation*}
and
 \begin{equation*}
V_s^{\rm min} <  \liminf_{|x|\to \infty} V_s(x)=:V_\infty^-.
  \end{equation*}
\item The minima $x_+$ and $x_-$ are non-degenerate critical points: $\nabla
  V(x_\pm)=0$ and $\nabla^2 V(x_\pm)>0$.
  \end{enumerate}
\end{hyp}
\begin{remark}
  As noted in \cite{Sa05}, the last assumption (non-degeneracy) is not
  crucial.
\end{remark}
We denote by
\begin{equation*}
  H_0 = -\frac{\hbar^2}{2}\Delta +V_s.
\end{equation*}
The operator $H_0$  admits a self-adjoint
realization (still denoted by $H_0$) on $L^2(\R^d)$ (see
e.g. \cite{ReedSimon2}).  
Let $\si(H_0)=\si_d\cup \si_{\rm ess}$ be the spectrum of the
self-adjoint operator $H_0$, 
where $\si_d$ denotes the discrete spectrum and $\si_{\rm ess}$
denotes the essential 
spectrum. It follows that 
\begin{equation*}
  \si_d \subset (V_s^{\rm min},V_\infty^-)\quad \text{and}\quad
  \si_{\rm ess}=[V_\infty^-,+\infty).
\end{equation*}
Furthermore, the following two
lemmas hold, which follow from \cite{BeSh91}:  
\begin{lemma}[Lemma~1 from \cite{Sa05}]\label{lem:1}
  For any $\hbar \in (0,\hbar^*]$, for some $\hbar^*$ fixed, it
  follows that: 
  \begin{itemize}
  \item[(i)] $\si_d$ is not empty and, in particular, it contains two
    eigenvalues at least.
\item[(ii)]  The lowest two eigenvalues $\lambda_\pm^\hbar$ of $H_0$
  are non-degenerate, in particular,
  $\lambda_+^\hbar<\lambda_-^\hbar$.  There exists $C>0$, 
independent of $\hbar$, 
such that 
\begin{equation*}
\lambda_\pm^\hbar = V_s^{\rm min}+\O(\hbar);\quad 
  \inf_{\lambda \in \si(H_0)\setminus
    [\lambda_+^\hbar,\lambda_-^\hbar]}\(\lambda-\lambda_\pm^\hbar\)\ge C\hbar.
\end{equation*}
  \end{itemize}
\end{lemma}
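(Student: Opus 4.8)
This is a standard result in semiclassical spectral theory (whence the references to \cite{Sa05,BeSh91}); the plan is to derive it from the harmonic approximation at the two wells, combined with the min--max principle for the upper bounds and an IMS localisation estimate for the lower bounds. Throughout I would use the facts recorded just above the lemma, namely that $\sigma_{\rm ess}(H_0)=[V_\infty^-,+\infty)$ with $V_s^{\rm min}<V_\infty^-$, so that every element of $\sigma(H_0)$ below $V_\infty^-$ is an isolated eigenvalue of finite multiplicity. By Assumption~\ref{hyp:Vs}(2)--(4) the Hessians $\nabla^2V_s(x_\pm)$ are positive definite and, by the symmetry, have the same eigenvalues; let $0<\mu_1\le\cdots\le\mu_d$ denote the square roots of these eigenvalues, normalised so that the model operator near $x_\pm$ is $h_\pm=-\tfrac{\hbar^2}{2}\Delta+V_s^{\rm min}+\tfrac12\langle\nabla^2V_s(x_\pm)(x-x_\pm),x-x_\pm\rangle$. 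Then $h_\pm$ has a simple ground state $g_\pm^\hbar$ (a Gaussian of width $\sim\hbar^{1/2}$ centred at $x_\pm$) with eigenvalue $E_0^\hbar=V_s^{\rm min}+\tfrac{\hbar}{2}\sum_j\mu_j$, and its next eigenvalue is $E_0^\hbar+\hbar\mu_1$.

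For the upper bounds I would use $g_+^\hbar$ and $g_-^\hbar$ as trial functions in the quadratic form of $H_0$: a third-order Taylor expansion of $V_s$ at $x_\pm$ together with the concentration of $g_\pm^\hbar$ at scale $\hbar^{1/2}$ gives $\langle H_0g_\pm^\hbar,g_\pm^\hbar\rangle=E_0^\hbar+\O(\hbar^{3/2})$, while $\langle g_+^\hbar,g_-^\hbar\rangle$ and $\langle H_0g_+^\hbar,g_-^\hbar\rangle$ are exponentially small in $\hbar$ because $x_+$ and $x_-$ are a fixed positive distance apart. By the min--max principle this yields at least two eigenvalues of $H_0$ below $V_s^{\rm min}+C_1\hbar$ for a fixed $C_1>\tfrac12\sum_j\mu_j$ and $\hbar$ small; since $V_s^{\rm min}<V_\infty^-$ they are genuine discrete eigenvalues, which proves (i) and the estimate $\lambda_\pm^\hbar=V_s^{\rm min}+\O(\hbar)$.

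The step I expect to be the main obstacle is the lower bound on the remaining spectrum. I would take a partition of unity $\chi_+^2+\chi_-^2+\chi_0^2=1$ with $\chi_\pm\equiv1$ on $B(x_\pm,\tfrac12\hbar^{1/2-\eta})$ and supported in $B(x_\pm,\hbar^{1/2-\eta})$ for a small $\eta\in(0,1/6)$, so that $|\nabla\chi_j|=\O(\hbar^{-1/2+\eta})$. The IMS formula $H_0=\sum_j\chi_jH_0\chi_j-\tfrac{\hbar^2}{2}\sum_j|\nabla\chi_j|^2$ has localisation error $\O(\hbar^{1+2\eta})$. On $\mathrm{supp}\,\chi_0$ one has $V_s-V_s^{\rm min}\ge c\hbar^{1-2\eta}$ (quadratic growth within the fixed neighbourhoods of $x_\pm$, strict positivity elsewhere and at infinity), hence $\langle H_0\chi_0u,\chi_0u\rangle\ge(V_s^{\rm min}+c\hbar^{1-2\eta})\|\chi_0u\|^2$; on $\mathrm{supp}\,\chi_\pm$, replacing $V_s$ by its quadratic part costs $\O(\hbar^{3(1/2-\eta)})=o(\hbar)$, so $\langle H_0\chi_\pm u,\chi_\pm u\rangle\ge\langle h_\pm\chi_\pm u,\chi_\pm u\rangle-o(\hbar)\|\chi_\pm u\|^2$. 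Since $\langle\chi_\pm u,g_\pm^\hbar\rangle=\langle u,g_\pm^\hbar\rangle+\O(\hbar^\infty)$ (the Gaussian tail of $g_\pm^\hbar$ outside $B(x_\pm,\tfrac12\hbar^{1/2-\eta})$ being negligible), one checks that for $u$ orthogonal to $\mathrm{span}(g_+^\hbar,g_-^\hbar)$ the $\chi_\pm h_\pm\chi_\pm$-contributions are pushed up to $E_0^\hbar+\hbar\mu_1-\O(\hbar^\infty)$; assembling the three pieces and using $\sum_j\|\chi_ju\|^2=1$ gives $\langle H_0u,u\rangle\ge V_s^{\rm min}+\hbar\bigl(\tfrac12\sum_j\mu_j+\mu_1\bigr)-o(\hbar)$ for such $u$. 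By min--max, the third level $\lambda_3^\hbar$ (the third eigenvalue, or $V_\infty^-$ if there are fewer than three discrete eigenvalues) satisfies $\lambda_3^\hbar\ge V_s^{\rm min}+\hbar\bigl(\tfrac12\sum_j\mu_j+\mu_1\bigr)-o(\hbar)$. The delicate points here are purely quantitative: weighing the cutoff and Taylor errors against the $\O(\hbar)$ scale of the gap, and verifying that the overlaps $\chi_\pm g_\mp^\hbar$ do not spoil the orthogonality.

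Putting the bounds together, $\lambda_2^\hbar\le V_s^{\rm min}+\tfrac\hbar2\sum_j\mu_j+\O(\hbar^{3/2})$ while $\lambda_3^\hbar\ge V_s^{\rm min}+\hbar(\tfrac12\sum_j\mu_j+\mu_1)-o(\hbar)$, so $\lambda_3^\hbar-\lambda_2^\hbar\ge\tfrac12\mu_1\hbar$ for $\hbar$ small; since there is no spectrum below $\lambda_1^\hbar=\min\sigma(H_0)$, this is precisely the gap estimate of (ii) with $C=\tfrac12\mu_1$ (after shrinking $\hbar^*$ to absorb the various smallness conditions). It remains to discuss non-degeneracy: $\lambda_+^\hbar=\lambda_1^\hbar$ is simple because $V_s$ is bounded below and hence $e^{-tH_0}$ is positivity improving, so its ground state can be chosen strictly positive; in particular $\lambda_+^\hbar<\lambda_-^\hbar$. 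Finally the gap estimate forces exactly two eigenvalues, counted with multiplicity, in $[V_s^{\rm min},V_s^{\rm min}+C_1\hbar]$, so $\lambda_-^\hbar=\lambda_2^\hbar$ is simple as well, which completes (ii). Of course, one may instead simply invoke \cite[Lemma~1]{Sa05}.
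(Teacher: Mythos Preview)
The paper does not actually prove this lemma: it is stated as ``Lemma~1 from \cite{Sa05}'' and introduced with the remark that it ``follow[s] from \cite{BeSh91}'', with no further argument given. Your closing sentence (``one may instead simply invoke \cite[Lemma~1]{Sa05}'') is therefore exactly what the paper does.

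Your sketch goes well beyond this and is the standard semiclassical argument (harmonic approximation plus min--max for the upper bound, IMS localisation at scale $\hbar^{1/2-\eta}$ for the lower bound, positivity improving for simplicity of the ground state). The bookkeeping is correct: the localisation error is $\O(\hbar^{1+2\eta})$, the Taylor error on $\mathrm{supp}\,\chi_\pm$ is $\O(\hbar^{3/2-3\eta})=o(\hbar)$ for $\eta<1/6$, and the lower bound $V_s-V_s^{\rm min}\ge c\hbar^{1-2\eta}$ on $\mathrm{supp}\,\chi_0$ eventually dominates the $\O(\hbar)$ scale. The one point you flag as delicate --- that orthogonality of $u$ to $g_\pm^\hbar$ transfers to approximate orthogonality of $\chi_\pm u$ --- is handled correctly via the $\O(\hbar^\infty)$ Gaussian tails, and the resulting $\O(\hbar^\infty)$ projection of $\chi_\pm u$ onto $g_\pm^\hbar$ only shifts the quadratic form by $\O(\hbar^\infty)$. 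Your conclusion that exactly two eigenvalues lie in the window (hence $\lambda_-^\hbar$ is simple) follows cleanly from $\lambda_3^\hbar-\lambda_2^\hbar\ge\hbar\mu_1-o(\hbar)>0$. So your proposal is correct and supplies a proof where the paper is content to cite one.
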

\begin{lemma}[\cite{BeSh91}, and Lemma~2 from \cite{Sa05}]\label{lem:2}
  Let $\varphi_\pm^\hbar$ be the normalized eigenvectors associated to
  $\lambda_\pm^\hbar$, then:
  \begin{itemize}
  \item[(i)] $\varphi_\pm^\hbar$ can be chosen to be real-valued functions
    such that
    \begin{equation*}
      \varphi_\pm^\hbar (-x_1,x_2,\dots, x_d) = \pm \varphi_\pm^\hbar (x_1,x_2,\dots, x_d).
    \end{equation*}
\item[(ii)] $\varphi_\pm^\hbar\in \Sigma\cap L^\infty(\R^d)$, where
  \begin{equation*}
    \Sigma =\{f\in H^1(\R^d),\ x\mapsto |x| f(x)\in L^2(\R^d)\}. 
  \end{equation*}
\item[(iii)] There exists $C$ independent of $\hbar$ such that for all
  $\hbar\in (0,\hbar*]$, 
  \begin{align*}
  &  \|\varphi_\pm^\hbar\|_{L^p(\R^d)}\le C
    \hbar^{-\frac{d}{2}\(\frac{1}{2}-\frac{1}{p}\)},\quad \forall p\in
    [2,\infty],\\
& \|\nabla \varphi_\pm^\hbar\|_{L^2(\R^d)}\le C\hbar^{-1/2},\quad \|x
\varphi_\pm^\hbar\|_{L^2(\R^d)}\le C.  
  \end{align*}
  \end{itemize}
\end{lemma}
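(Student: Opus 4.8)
The plan is to recall the arguments behind these three facts, which are essentially those of \cite{BeSh91} and are assembled as Lemma~2 of \cite{Sa05}; I only indicate the main points. For part~(i), I would first use that $H_0=-\frac{\hbar^2}{2}\Delta+V_s$ commutes with complex conjugation, so each of its eigenspaces is conjugation-invariant; since $\lambda_\pm^\hbar$ are simple by Lemma~\ref{lem:1}(ii), the corresponding eigenspaces are one-dimensional and therefore spanned by real-valued functions. Denoting by $P$ the reflection $(x_1,x_2,\dots,x_d)\mapsto(-x_1,x_2,\dots,x_d)$, Assumption~\ref{hyp:Vs}(2) gives $[P,H_0]=0$; as $P^2={\rm Id}$ and the eigenspaces are one-dimensional, $\varphi_\pm^\hbar$ must be eigenvectors of $P$, hence of definite parity. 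To fix the parity I would invoke the standard fact that a ground state of a Schr\"odinger operator may be taken strictly positive: a strictly positive function is not odd, so $\varphi_+^\hbar$ is even, and then $\varphi_-^\hbar$, being $L^2$-orthogonal to $\varphi_+^\hbar$ and of definite parity, is odd.

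For part~(ii), the essential observation is that $\lambda_\pm^\hbar$ lies strictly below $\inf\si_{\rm ess}(H_0)=V_\infty^-$, with a gap bounded below independently of $\hbar$ since $\lambda_\pm^\hbar=V_s^{\rm min}+\O(\hbar)$ and $V_s^{\rm min}<V_\infty^-$ by Assumption~\ref{hyp:Vs}(3). Agmon-type estimates then give exponential decay of $\varphi_\pm^\hbar$ at infinity, and together with elliptic regularity (using $\d^\alpha V_s\in L^\infty(\R^d)$ for $|\alpha|\ge 2$, Assumption~\ref{hyp:Vs}(1)) and Sobolev embeddings this yields $\varphi_\pm^\hbar\in\Sigma\cap L^\infty(\R^d)$.

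For part~(iii), I would exploit the semiclassical localization of $\varphi_\pm^\hbar$ near the non-degenerate minima $x_\pm$ at the scale $\sqrt\hbar$, where it is comparable to a rescaled harmonic-oscillator ground state $\hbar^{-d/4}\chi((x-x_\pm)/\sqrt\hbar)$ for a fixed profile $\chi$; a change of variables then yields the bound $\|\varphi_\pm^\hbar\|_{L^p(\R^d)}\le C\hbar^{-\frac d2(\frac12-\frac1p)}$. The gradient estimate follows from the energy identity $\frac{\hbar^2}{2}\|\nabla\varphi_\pm^\hbar\|_{L^2}^2+\langle V_s\varphi_\pm^\hbar,\varphi_\pm^\hbar\rangle=\lambda_\pm^\hbar$ together with $V_s\ge V_s^{\rm min}$ and $\lambda_\pm^\hbar-V_s^{\rm min}=\O(\hbar)$, which forces $\|\nabla\varphi_\pm^\hbar\|_{L^2}^2=\O(\hbar^{-1})$; and $\|x\varphi_\pm^\hbar\|_{L^2}\le C$ holds because the $L^2$-mass of $\varphi_\pm^\hbar$ sits within $\O(\sqrt\hbar)$ of the bounded points $x_\pm$ while the Agmon tails contribute only an exponentially small amount.

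The main obstacle is part~(iii): one needs the Agmon decay and the harmonic approximation near the wells to hold with constants \emph{uniform in $\hbar$}, rather than the soft qualitative statements sufficient for (i)--(ii). This quantitative semiclassical input is precisely what is provided by \cite{BeSh91} and repackaged in \cite{Sa05}, and it is where I would rely on those references.
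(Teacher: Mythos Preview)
The paper does not prove this lemma: it is stated as a citation from \cite{BeSh91} and \cite{Sa05} (Lemma~2), with no argument given in the text. Your sketch is a correct outline of the standard arguments one finds in those references---simplicity plus reflection symmetry for (i), Agmon decay and elliptic regularity for (ii), and the semiclassical harmonic approximation at scale $\sqrt\hbar$ for (iii)---so your proposal is consistent with, and more detailed than, what the paper itself provides.
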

The single-well states are then defined as in
\eqref{eq:double-well-single}. They satisfy:
\begin{itemize}
\item $\varphi_R^\hbar(-x_1,x_2,\dots,x_d)= \varphi_L^\hbar(x_1,x_2,\dots,x_d)$.
\item $\|\varphi_L^\hbar \varphi_R^\hbar\|_{L^\infty} = \O\( {\rm e}^{-c/\hbar}\)$
  for all $c<\Gamma$, where $\Gamma$ denotes the Agmon distance
  between the two wells
  \begin{equation*}
    \Gamma =\inf_{\gamma \text{ connecting the two wells}} \int_\gamma
    \sqrt{V_s(x)- V_s^{\rm min}}dx>0.
  \end{equation*}
\item For any $r>0$, there exists $C>0$ such that
  \begin{equation*}
    \int_{B(x_+,r)}|\varphi_R^\hbar(x)|^2dx =1 +\O\({\rm e}^{-C/\hbar}\),\quad 
\int_{B(x_-,r)}|\varphi_L^\hbar(x)|^2dx =1 +\O\({\rm e}^{-C/\hbar}\).
  \end{equation*}
\end{itemize}
We denote by
\begin{equation*}
  \Pi_c^\hbar = 1-\( \<\varphi_+^\hbar,\cdot\>\varphi_+^\hbar +
  \<\varphi_-^\hbar,\cdot\>\varphi_-^\hbar\) 
\end{equation*}
the projection onto the eigenspace orthogonal to the bi-dimensional
space associated to $\lambda_\pm^\hbar$. 
Finally, we define the splitting between the lowest two eigenvalues
\begin{equation}\label{eq:splitting}
  \omega^\hbar = \frac{\lambda_-^\hbar-\lambda_+^\hbar}{2}.
\end{equation}
Then for all $c<\Gamma$, $\omega^\hbar =\O\( {\rm e}^{-c/\hbar}\)$.
\smallbreak

We now describe the assumptions made on the potential $V_a$.
\begin{hyp}[Antisymmetric potential]\label{hyp:Va}
 The potential $V_a\in {\mathcal C}^\infty(\R^d)$ is a smooth real-valued
  function such that: 
  \begin{enumerate}
 \item The potential $V_a$ is bounded, as well as its derivatives,
  \begin{equation*}
    \d^\alpha V_a\in L^\infty(\R^d),\quad \forall |\alpha|\ge 0.
  \end{equation*}
 \item $V_a$ is antisymmetric with respect to the first coordinate:
    \begin{equation*}
      V_a(-x_1,x_2,\dots,x_d)= -V_a(x_1,x_2,\dots,x_d),\quad \forall
      x\in \R^d.
    \end{equation*}  
\end{enumerate}
\end{hyp}
\subsection{An approximation result}
\label{sec:approx}

In this section, we prove:
\begin{proposition}\label{prop:approx}
  Let $d\le 2$. Let $V^\hbar$ be as in Section~\ref{sec:framework}, with
  \begin{equation*}
    \kappa=\kappa(\hbar) = \eta \frac{(\omega^\hbar)^2}{\hbar},
  \end{equation*}
for some $\eta\in\R$ independent of $\hbar$.
Suppose that
  $\epsilon(\hbar)$ is given by
  \begin{equation*}
    \epsilon(\hbar) = \delta \omega^\hbar \hbar^{d/2},
  \end{equation*}
where $\delta\ge 0$ does not depend on $\hbar$.  Suppose also that
the initial datum is of the form
\begin{equation*}
  \psi^\hbar(0,x) = \alpha_L\varphi_L^\hbar(x)+\alpha_R\varphi_R^\hbar(x),\quad
  \alpha_L,\alpha_R\in \C\text{ independent of }\hbar. 
\end{equation*}
Define the approximation solution $\psi_{\rm app}$ by
\begin{equation*}
  \psi_{\rm app}^\hbar(t,x) =a_L\(\frac{\omega^\hbar t}{\hbar}\)
  \varphi_L^\hbar(x)+a_R\(\frac{\omega^\hbar t}{\hbar}\)\varphi_R^\hbar(x), 
\end{equation*}
where $(a_L,a_R)=(a_L(\tau),a_R(\tau))$ solves
\begin{equation}\label{eq:double-well-env}
\left\{
  \begin{aligned}
    i\d_\tau a_L& =  \eta \tau  a_L - a_R +
    \delta^\hbar |a_L|^2
    a_L;\quad a_{L\mid \tau=0}=\alpha_L,\\
i\d_\tau a_R& =  -\eta \tau  a_R -  a_L+
    \delta^\hbar|a_R|^2
    a_R;\quad a_{R\mid \tau=0}=\alpha_R,
  \end{aligned}
\right.
\end{equation}
and
\begin{equation*}
  \delta^\hbar = \delta \hbar^{d/2}\int_{\R^d}\varphi_L^4= \delta
  \hbar^{d/2}\int_{\R^d}\varphi_R^4  
\end{equation*}
is uniformly bounded in $\hbar\in (0,\hbar^*]$.
Then we have the following error estimate:
there exist $c,C$ independent of $\hbar$ such that for all $\gamma<\Gamma$,
\begin{equation*}
  \sup_{| t|\le c
    \sqrt \hbar/\omega^\hbar}\|\psi^\hbar(t)-
{\rm e}^{-it(\lambda_-^\hbar+\lambda_+^\hbar)/(2\hbar)}
\psi_{\rm   app}^\hbar(t)\|_{L^2}\le C {\rm e}^{-\gamma/\hbar}. 
\end{equation*}
\end{proposition}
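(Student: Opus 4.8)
The proof will follow a consistency--stability scheme, adapting the semiclassical machinery of \cite{Sa05} (and \cite{BaSa07} in dimension one). Write $\Omega^\hbar=\frac12(\lambda_+^\hbar+\lambda_-^\hbar)$ and $\omega^\hbar$ as in~\eqref{eq:splitting}; from the definition of the single-well states and Lemma~\ref{lem:2} one has $H_0\varphi_L^\hbar=\Omega^\hbar\varphi_L^\hbar-\omega^\hbar\varphi_R^\hbar$ and $H_0\varphi_R^\hbar=\Omega^\hbar\varphi_R^\hbar-\omega^\hbar\varphi_L^\hbar$. Set $\tau=\omega^\hbar t/\hbar$ and $\widetilde\psi^\hbar(t)={\rm e}^{-it\Omega^\hbar/\hbar}\psi_{\rm app}^\hbar(t)$, and look for a corrected approximation $\widetilde\psi^\hbar+\psi_c^\hbar$, where $\psi_c^\hbar(t)$ is a small corrector with values in the range of $\Pi_c^\hbar$, to be specified below. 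The plan is to show that $w^\hbar:=\psi^\hbar-\widetilde\psi^\hbar-\psi_c^\hbar$ stays of size $\O({\rm e}^{-\gamma/\hbar})$ in $L^2$ for $|t|\le c\sqrt\hbar/\omega^\hbar$; since $\psi_c^\hbar$ will itself be exponentially small, this yields the bound for $\psi^\hbar-\widetilde\psi^\hbar$.

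\emph{Consistency.} First I would plug $\widetilde\psi^\hbar$ into~\eqref{eq:double-well-semi}. Using the two eigenvalue identities above, the scaling $\kappa(\hbar)=\eta(\omega^\hbar)^2/\hbar$, the parity relations $\langle\varphi_\pm^\hbar,V_a\varphi_\pm^\hbar\rangle=\langle\varphi_L^\hbar,V_a\varphi_R^\hbar\rangle=0$ (which hold because $\varphi_\pm^\hbar$ have definite parity and $V_a$ is antisymmetric), and the envelope system~\eqref{eq:double-well-env}, the component of the residual lying in the two-dimensional eigenspace $\mathrm{span}\{\varphi_+^\hbar,\varphi_-^\hbar\}$ cancels exactly, up to two types of leftover: the cubic cross-monomials carrying a factor $\varphi_L^\hbar\varphi_R^\hbar$, which are $\O({\rm e}^{-c/\hbar})$ in $L^\infty$ hence --- using $\epsilon(\hbar)=\delta\omega^\hbar\hbar^{d/2}$ and Lemma~\ref{lem:2}(iii) --- $\O(\delta\omega^\hbar{\rm e}^{-c/\hbar})$ in $L^2$; and the difference between $\int\varphi_{L/R}^4$ and the exact cubic self-coupling, again governed by $\varphi_L^\hbar\varphi_R^\hbar$ and exponentially small. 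What is left is the component of the residual in the range of $\Pi_c^\hbar$, namely $\kappa t\,\Pi_c^\hbar\!\bigl(V_a(a_L\varphi_L^\hbar+a_R\varphi_R^\hbar)\bigr)$ together with $\epsilon(\hbar)\,\Pi_c^\hbar\!\bigl(|\psi_{\rm app}^\hbar|^2\psi_{\rm app}^\hbar\bigr)$.

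\emph{Adiabatic decoupling of the excited subspace.} On the time window these $\Pi_c^\hbar$-residuals are pointwise small --- of $L^2$-size $\O(\omega^\hbar\hbar^{-1/2})+\O(\delta\omega^\hbar)$ --- but they are \emph{not} negligible once accumulated over the exponentially long interval $|t|\lesssim c\sqrt\hbar/\omega^\hbar$ and divided by $\hbar$ in the Duhamel formula; this is the first delicate point. The key structural fact is that $H_0-\Omega^\hbar$ is boundedly invertible on the range of $\Pi_c^\hbar$, with $\|(H_0-\Omega^\hbar)^{-1}\Pi_c^\hbar\|=\O(1/\hbar)$ by Lemma~\ref{lem:1}(ii). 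I would therefore set
\[
\psi_c^\hbar(t)=-{\rm e}^{-it\Omega^\hbar/\hbar}(H_0-\Omega^\hbar)^{-1}\Pi_c^\hbar\Bigl(\kappa t\,V_a(a_L\varphi_L^\hbar+a_R\varphi_R^\hbar)+\epsilon(\hbar)\,|\psi_{\rm app}^\hbar|^2\psi_{\rm app}^\hbar\Bigr),
\]
which by Lemma~\ref{lem:2}(iii) has $L^2$-norm $\O(\omega^\hbar\hbar^{-N})$ for a fixed $N$, hence exponentially small, and which absorbs the $\Pi_c^\hbar$-part of the residual at the cost of new residuals of the same exponentially small type (one gains a full power of $\hbar$ from $i\hbar\d_t$ against the $\O(1/\hbar)$ loss from the resolvent, the remaining factors being powers of $\kappa$, $t$, $\omega^\hbar$, $\hbar^{-1}$ and of $\epsilon(\hbar)\hbar^{-d/2}=\O(\delta\omega^\hbar)$). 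Equivalently, one may keep the naive ansatz and integrate by parts once in time in the Duhamel representation of $\Pi_c^\hbar w^\hbar$, using the spectral gap $\ge C\hbar$. Either way, $\widetilde\psi^\hbar+\psi_c^\hbar$ solves~\eqref{eq:double-well-semi} up to a source $\rho^\hbar$ with $\int_{|s|\le c\sqrt\hbar/\omega^\hbar}\|\rho^\hbar(s)\|_{L^2}\,ds=\O({\rm e}^{-\gamma/\hbar})$ for every $\gamma<\Gamma$.

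\emph{Stability, the main obstacle, and conclusion.} The difference $w^\hbar$ solves a perturbation of $i\hbar\d_t w^\hbar=(H_0+\kappa tV_a)w^\hbar$, whose propagator is unitary on $L^2$, so the energy estimate gives
\[
\frac{d}{dt}\|w^\hbar(t)\|_{L^2}\lesssim\frac1\hbar\Bigl(\|\rho^\hbar(t)\|_{L^2}+\epsilon(\hbar)\bigl(\|\psi^\hbar(t)\|_{L^\infty}^2+\|\widetilde\psi^\hbar(t)\|_{L^\infty}^2\bigr)\|w^\hbar(t)\|_{L^2}\Bigr).
\]
Here $\|\widetilde\psi^\hbar\|_{L^\infty}\lesssim\hbar^{-d/4}$ by Lemma~\ref{lem:2}(iii), so $\epsilon(\hbar)\|\widetilde\psi^\hbar\|_{L^\infty}^2=\O(\delta\omega^\hbar)$. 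The genuine difficulty is that the cubic term is semiclassically singular: controlling $\|\psi^\hbar\|_{L^\infty}$ by a crude Sobolev bound would lose powers of $\hbar$ that destroy the exponential smallness. Following \cite{Sa05}, the remedy is to propagate the spectral splitting $\psi^\hbar=b_+^\hbar\varphi_+^\hbar+b_-^\hbar\varphi_-^\hbar+\Pi_c^\hbar\psi^\hbar$ and to bootstrap that $\|\Pi_c^\hbar\psi^\hbar\|_\Sigma$ stays small on the time window (the space $\Sigma$ being preserved since $V_s$ is at most quadratic and $V_a$ bounded), so that $\|\psi^\hbar\|_{L^\infty}\lesssim\hbar^{-d/4}$, and to estimate the cubic term with the $L^p$ bounds of Lemma~\ref{lem:2} and Gagliardo--Nirenberg (equivalently, Strichartz) inequalities, which is exactly where $d\le 2$ is used. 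With this, the Gronwall constant over $|t|\le c\sqrt\hbar/\omega^\hbar$ is $\exp\bigl(\O(\delta\omega^\hbar/\hbar)\cdot\O(\sqrt\hbar/\omega^\hbar)\bigr)=\exp\bigl(\O(\delta/\sqrt\hbar)\bigr)$, which is absorbed by ${\rm e}^{-\gamma/\hbar}$ for $\hbar$ small. Since $w^\hbar(0)=-\psi_c^\hbar(0)=\O({\rm e}^{-\gamma/\hbar})$ by the choice of initial datum, Gronwall yields $\sup_{|t|\le c\sqrt\hbar/\omega^\hbar}\|w^\hbar(t)\|_{L^2}=\O({\rm e}^{-\gamma/\hbar})$, and adding back $\psi_c^\hbar$ gives the statement. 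The hard part, as stressed, is the semiclassical nonlinear stability in dimension $d\le 2$; the rest is bookkeeping carefully matched to the scalings of $\kappa(\hbar)$ and $\epsilon(\hbar)$.
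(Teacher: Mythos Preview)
Your scheme is sound and uses the same two technical pillars as the paper: the spectral gap $\inf_{\lambda\in\sigma(H_0)\setminus[\lambda_+^\hbar,\lambda_-^\hbar]}(\lambda-\lambda_\pm^\hbar)\ge C\hbar$ to gain a factor $\omega^\hbar/\hbar$ on the $\Pi_c^\hbar$-component (your resolvent corrector is the algebraic twin of the paper's integration by parts in Duhamel), and Gagliardo--Nirenberg in $d\le2$ to close the cubic remainder. The organization, however, is genuinely different. You run a consistency--stability argument: build $\widetilde\psi^\hbar+\psi_c^\hbar$, compute a residual, and Gronwall the error $w^\hbar$. The paper instead decomposes the \emph{exact} solution as $\Psi^\hbar={\tt a}_L^\hbar\varphi_L^\hbar+{\tt a}_R^\hbar\varphi_R^\hbar+\Pi_c^\hbar\Psi^\hbar$, first proves $\|\Pi_c^\hbar\Psi^\hbar\|_{L^2}$ is exponentially small by a Gronwall in the slow variable $\tau$ (with source terms handled by the integration by parts), and only then compares the exact coefficients ${\tt a}_{L/R}^\hbar$ with the envelope solutions $a_{L/R}$.

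The practical payoff of the paper's route concerns what you flag as the ``main obstacle'': controlling the cubic term without a circular $L^\infty$ bound. You propose to bootstrap $\|\Pi_c^\hbar\psi^\hbar\|_\Sigma$; this can be made to work but is delicate. The paper avoids it entirely by a direct \emph{a priori} energy computation on the full solution,
\[
\|\hbar\nabla\psi^\hbar(t)\|_{L^2}^2\le C\hbar+C\frac{(\omega^\hbar)^2}{\hbar}\,t,
\]
obtained from $dE^\hbar/dt=\kappa(\hbar)\int V_a|\psi^\hbar|^2$ and mass conservation. In the slow time this reads $\|\nabla\Psi^\hbar(\tau)\|_{L^2}^2\le C(\hbar^{-1}+\omega^\hbar\tau\hbar^{-2})$, and Gagliardo--Nirenberg then bounds $\|\psi_c^\hbar\|_{L^6}^3$ by $\|\psi_c^\hbar\|_{L^2}$ times an explicit power of $\hbar$, with no bootstrap needed. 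The resulting Gronwall factor is $\exp\bigl(C(\tau+\tau^2)\bigr)$, so the admissible window $|\tau|\le c/\sqrt\hbar$ is determined by balancing $C\tau^2$ against the exponential decay of $\omega^\hbar$; this is where the constant $c$ in the statement comes from. Your Gronwall budget $\exp\bigl(\O(\delta/\sqrt\hbar)\bigr)$ is actually more generous, but it rests on the unproved bootstrap $\|\psi^\hbar\|_{L^\infty}\lesssim\hbar^{-d/4}$. If you want to keep your consistency--stability layout, the cleanest fix is to borrow the paper's a priori gradient bound verbatim and feed it into your stability step in place of the bootstrap.
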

\begin{remark}
  The case $d=1$ and $\delta<0$ could be considered as well, leading
  to the same result. Considering this case would just make the proof
  a bit longer, and we refer to \cite{Sa05} for the adaptation. 
\end{remark}
\begin{remark}
  Since the range for the slow time $\tau =\omega^\hbar t/\hbar$ is
  $-c/\sqrt \hbar \le \tau\le c/\sqrt \hbar$,  the above
  approximation result is a large time result, which is consistent
  with the large time study of \eqref{eq:double-well-env}, or, more
  generally, of \eqref{eq:NLLZ2} to which one reduces thanks to the
  change of variables $s=\sqrt\eta \tau$ (we then have
  $G=-\eta^{-1/2}$ and $\delta F(u)=\eta^{-1/2} \delta^\hbar{\rm diag}
  (|u_1|^2,|u_2|^2)$).  When $\hbar$ is small, Theorem~\ref{theo:scat}
  gives asymptotics for the profiles $a_L$ and $a_R$ of
  $\psi^\hbar_{\rm app}$ for large times  ($|t|\le c{\sqrt\hbar/
    \omega^\hbar}$) and the connexion between the profiles for $t<0$
  and $t>0$ involves the Landau-Zener transition coefficient ${\rm
    e}^{-{\pi\over\eta^2}}$ (at leading order when $\hbar$ goes
  to~$0$).  
\end{remark}

\begin{proof} For simplicity, we write the proof for  $t\ge 0$ and it naturally extends to $t\le0$. \\
{\bf Step 1: Preliminaries.} 
We begin by proving estimates on $\psi^\hbar$, then we perform the rescaling suggested by the form of the approximation solution.
  First, it follows from Lemma~\ref{lem:2} and \cite{Ca11} that for
  fixed $\hbar>0$,  \eqref{eq:double-well-semi} has a unique, global
  solution $\psi^\hbar\in C(\R;\Sigma)$ ($\Sigma$ is defined in
  Lemma~\ref{lem:2}). In addition, if we set
  \begin{align*}
   \text{Mass: } &M^\hbar(t) = \|\psi^\hbar(t)\|_{L^2}^2,\\
\text{Energy: }& E^\hbar(t) = \frac{1}{2}\|\hbar \nabla \psi^\hbar(t)\|_{L^2}^2 +
\frac{\epsilon(\hbar)}{2}\|\psi^\hbar(t)\|_{L^4}^4 + \int_{\R^d}V(t,x)
|\psi^\hbar(t,x)|^2dx ,
  \end{align*}
then we  have the \emph{a priori} estimates:
  \begin{align*}
    &\frac{d M^\hbar}{dt} =0,\\
&  \frac{d E^\hbar}{dt} = \int_{\R^d} \d_t V(t,x)
|\psi^\hbar(t,x)|^2dx= \kappa(\hbar)\int_{R^d} V_a(x)|\psi^\hbar(t,x)|^2dx ,
\end{align*}
where we have used \eqref{eq:double-well-V} for the last
equality. Note that \eqref{eq:double-well-semi} is not a Hamiltonian
equation, unlike the one studied in \cite{BaSa07}, where the
Hamiltonian structure is crucial.  The
conservation of mass and the form of the initial data yield
\begin{equation*}
 M^{\hbar}(t)= \|\psi^\hbar(t)\|_{L^2} =  \|\psi^\hbar(0)\|_{L^2} =M^\hbar(0)\le C,
\end{equation*}
for some $C$ independent of $\hbar$.
We have
\begin{align*}
  E^\hbar (0)&= \<\alpha_L\varphi_L^\hbar + \alpha_R\varphi_R^\hbar,\alpha_L H_0
  \varphi_L^\hbar + \alpha_R H_0\varphi_R^\hbar\> +
  \frac{\epsilon(\hbar)}{2}\|\alpha_L\varphi_L^\hbar +
  \alpha_R\varphi_R^\hbar\|_{L^4}^4 .
\end{align*}
Introduce $\Omega^\hbar=(\lambda_-^\hbar+\lambda_+^\hbar)/2$ and notice that $\Omega^\hbar=V_s^{\rm min} +\O(\hbar)$ by Lemma~\ref{lem:1}. Noting
the identities 
\begin{equation}\label{eq:Hvp}
  H_0\varphi_L^\hbar = \Omega^\hbar \varphi_L^\hbar - \omega^\hbar
  \varphi_R^\hbar;\quad 
H_0\varphi_R^\hbar = \Omega^\hbar \varphi_R^\hbar - \omega^\hbar
  \varphi_L^\hbar,
\end{equation}
we infer from Lemma~\ref{lem:1} and Lemma~\ref{lem:2} that
\begin{equation*}
  E^\hbar (0)= V_s^{\rm min} M^\hbar(0) +\O(\hbar). 
\end{equation*}
Therefore, since $V_a$ is bounded,
\begin{align*}
\|\hbar \nabla \psi^\hbar(t)\|_{L^2}^2 & \le 2 E^\hbar(t) -2 V_s^{\rm
  min} M^\hbar(t)+C\kappa(\hbar)tM^\hbar(t) \\ 
 &\le  2 E^\hbar(0) -2 V_s^{\rm
  min} M^\hbar(0)+2\kappa(\hbar)\int_0^t \int_{R^d}
V_a(x)|\psi^\hbar(t,x)|^2dx+C\kappa(\hbar)t\\ 
&\le C\hbar + C \frac{(\omega^\hbar)^2}{\hbar} t. 
\end{align*}
Next, as in \cite{Sa05}, we consider the slow time
\begin{equation*}
  \tau = \frac{\omega^\hbar t}{\hbar}\ge 0,
\end{equation*}
and the new unknown function
\begin{equation*}
  \Psi^\hbar (\tau,x) = \psi^\hbar (t,x) {\rm e}^{i\Omega^\hbar t/\hbar} =
  \psi^\hbar (t,x) {\rm e}^{i\Omega^\hbar \tau/\omega^\hbar} .
\end{equation*}
It solves
\begin{equation}\label{eq:Psi}
  i\d_\tau \Psi^\hbar = \frac{1}{\omega^\hbar}\(H_0-\Omega^\hbar\)
  \Psi^\hbar +\eta \tau V_a \Psi^\hbar +\delta
  \hbar^{d/2}|\Psi^\hbar|^2\Psi^\hbar, 
\end{equation}
and in view of the above estimates,
\begin{equation}\label{eq:aprioriPsi}
  \|\Psi^\hbar(\tau)\|_{L^2} = \|\Psi^\hbar(0)\|_{L^2}=\O(1);\quad
  \|\nabla \Psi^\hbar(\tau)\|_{L^2}^2 \le C\( \frac{1}{\hbar} +
  \frac{\omega^\hbar}{\hbar^2}\tau\). 
\end{equation}
We decompose $\Psi^\hbar$ as 
\begin{equation}\label{CapitalPsi}
  \Psi^\hbar = \varphi^\hbar + \psi_c^\hbar,\quad
  \psi_c^\hbar=\Pi_c^\hbar \Psi^\hbar,
\end{equation}
so we can write $\varphi^\hbar$ as
\begin{equation*}
  \varphi^\hbar(\tau,x) = \ta_L(\tau)\varphi_L^\hbar(x) +
  \ta_R(\tau)\varphi_R^\hbar(x)  ,
\end{equation*}
for some complex-valued coefficients $\ta_L$ and $\ta_R$. Projecting
\eqref{eq:Psi}, and using \eqref{eq:Hvp}, we find:
\begin{align*}
  i\dot {\tt a}^\hbar_L  & = -\ta_R +\eta \tau \int_{\R^d}V_a \Psi^\hbar
  \varphi_L^\hbar + \delta \hbar^{d/2} \int_{\R^d} |\Psi^\hbar|^2
  \Psi^\hbar \varphi_L^\hbar,\\
i\dot {\tt a}^\hbar_R & = -\ta_L +\eta \tau \int_{\R^d}V_a \Psi^\hbar
  \varphi_R^\hbar + \delta \hbar^{d/2} \int_{\R^d} |\Psi^\hbar|^2
  \Psi^\hbar \varphi_R^\hbar,\\
i\d_\tau \psi_c^\hbar& =
\frac{1}{\omega^\hbar}\(H_0-\Omega^\hbar\)\psi_c^\hbar +\eta \tau
\Pi^\hbar_c\(V_a \Psi^\hbar\)+\delta 
\hbar^{d/2}\Pi_c^\hbar\(|\Psi^\hbar|^2   \Psi^\hbar\).
\end{align*}
The proof of Proposition~\ref{prop:approx} consists in showing that
$\ta_{L/R}$ are close to $a_{L/R}$, and that $\psi^\hbar_c$ is
small. This is achieved in two more steps. 
\smallbreak

\noindent {\bf Step 2. \emph{A priori} estimates on $\ta_R$, $\ta_L$ and $\psi^\hbar_c$.} 
By definition, we have $\ta_L=\int_{\R^d}\Psi^\hbar \varphi_L^\hbar$, so
Cauchy--Schwarz inequality yields
\begin{equation*}
\|\varphi^\hbar(\tau)\|^2_{L^2}=  |\ta_L(\tau)|^2+ |\ta_R(\tau)|^2\le
  \(M^\hbar\)^2\(\|\varphi_L^\hbar\|_{L^2}^2 +
  \|\varphi_R^\hbar\|_{L^2}^2\)\le 2\(M^\hbar\)^2. 
\end{equation*}
Decompose the nonlinearity acting on
$\Psi^\hbar$ as
\begin{equation*}
  |\Psi^\hbar|^2 \Psi^\hbar = |\varphi^\hbar|^2\varphi^\hbar + \tR.
\end{equation*}
Using the \emph{a priori} estimates on $\ta$ and Lemma~\ref{lem:2}, we have
\begin{equation*}
  \left\| |\varphi^\hbar|^2\varphi^\hbar\right\|_{L^2}\le
  \|\varphi^\hbar\|_{L^\infty}^2 \|\varphi^\hbar\|_{L^2}\le
  C\(\|\varphi_L^\hbar\|_{L^\infty}^2 +  \|\varphi^\hbar_R\|_{L^\infty}^2\)\le C\hbar^{-d/2}.
\end{equation*}
Since we have the pointwise estimate
\begin{equation*}
 |\tR|\le C\( |\varphi^\hbar|^2 |\psi^\hbar_c| + |\psi^\hbar_c|^3\), 
\end{equation*}
we infer
\begin{equation*}
  \|\tR\|_{L^2}\le C\( \hbar^{-d/2} \|\psi^\hbar_c\|_{L^2} + 
\|\psi^\hbar_c\|^3_{L^6}\),
\end{equation*}
and Gagliardo--Nirenberg inequality yields ($d\le 2$)
\begin{equation*}
  \|\psi^\hbar_c\|^3_{L^6(\R^d)} \le C
  \|\psi^\hbar_c\|_{L^2(\R^d)}^{3-d}
\|\nabla\psi^\hbar_c\|_{L^2(\R^d)}^{d}.
\end{equation*}
Since $d\le 2$, we can factor out $\|\psi^\hbar_c\|_{L^2(\R^d)}$, and
\eqref{eq:aprioriPsi} gives
\begin{equation*}
  \|\psi^\hbar_c\|^3_{L^6(\R^d)} \le C
  \|\psi^\hbar_c\|_{L^2(\R^d)}\hbar^{-d/2}\(1 + \(\frac{\omega^\hbar
    \tau}{\hbar}\)^{d/2}\).
\end{equation*}
Therefore,
\begin{equation}
  \label{eq:aprioriR}
   \|\tR\|_{L^2}\le C \hbar^{-d/2}\(1 + \(\frac{\omega^\hbar
    \tau}{\hbar}\)^{d/2}\)\|\psi^\hbar_c\|_{L^2},
\end{equation}
and $|\Psi^\hbar|^2 \Psi^\hbar$ satisfies a similar estimate. We infer 
\begin{equation*}
  \left\lvert\dot {\tt a}^\hbar_L(\tau)\right\rvert + \left\lvert\dot {\tt
      a}^\hbar_R(\tau)\right\rvert \le C \(1+\tau\) +C\(1 +
  \(\frac{\omega^\hbar    \tau}{\hbar}\)^{d/2}\) .
\end{equation*}
Since $d\le 2$ and $\omega^\hbar   =\O({\rm e}^{-c/\hbar})$, we can simplify
the above estimate:
\begin{equation*}
  \left\lvert\dot {\tt a}^\hbar_L(\tau)\right\rvert + \left\lvert\dot {\tt
      a}^\hbar_R(\tau)\right\rvert \le C \(1+\tau\),\quad \forall
  \tau\ge 0. 
\end{equation*}
From this we infer
\begin{equation}\label{eq:aprioridtauphi}
  \left\lVert \d_\tau \varphi^\hbar\right\rVert_{L^2} \le
  \left\lvert\dot {\tt a}^\hbar_L(\tau)\right\rvert
\|\varphi_L^\hbar\|_{L^2} + \left\lvert\dot {\tt a}^\hbar_R(\tau)\right\rvert
\|\varphi_R^\hbar\|_{L^2} \le C(1+\tau),
\end{equation}
and, again from Lemma~\ref{lem:2},
\begin{equation}\label{eq:aprioridtauphi3}
   \left\lVert \d_\tau \(\lvert
    \varphi^\hbar\rvert^2\varphi^\hbar\)\right\rVert_{L^2}  \le
  3\|\varphi^\hbar\|_{L^\infty}^2\left\lVert \d_\tau
    \varphi^\hbar\right\rVert_{L^2}   \le C\hbar^{-d/2}(1+\tau). 
\end{equation}
\smallbreak

\noindent {\bf Step 3. Stability estimates.} In view of~\eqref{CapitalPsi}, we first prove that $\psi^\hbar_c$
is small. Since $\psi^\hbar_{c\mid \tau=0}=0$, Duhamel's formula
yields
\begin{align}\label{align}
  \psi^\hbar_c(\tau) &= -i\eta\int_0^\tau
  {\rm e}^{-i(H_0-\Omega^\hbar)(\tau-s)/\omega^\hbar} \(s \Pi^\hbar_c\( V_a
  \Psi^\hbar\)(s)\)ds\\
  \nonumber
&\quad -i \delta \hbar^{d/2}\int_0^\tau
  {\rm e}^{-i(H_0-\Omega^\hbar)(\tau-s)/\omega^\hbar} \Pi^\hbar_c\( |\Psi^\hbar|^2
  \Psi^\hbar\)(s)ds.
\end{align}
Each term is treated in a similar fashion: when $\Psi^\hbar$ is
replaced by $\varphi^\hbar$, we perform an integration by
parts, and for the remaining term,  we use directly the \emph{a priori}
estimates. For the first part of~\eqref{align}, we write
\begin{equation*}
  \Pi^\hbar_c\( V_a
  \Psi^\hbar\)= \Pi^\hbar_c\( V_a
  \varphi^\hbar\) + \Pi^\hbar_c\( V_a
  \psi^\hbar_c\),
\end{equation*}
and set
\begin{align*}
  I^\hbar_1(\tau) & = \int_0^\tau
  {\rm e}^{-i(H_0-\Omega^\hbar)(\tau-s)/\omega^\hbar} \(s \Pi^\hbar_c\( V_a
  \varphi^\hbar\)(s)\)ds,\\
I^\hbar_2(\tau) & = \int_0^\tau
  {\rm e}^{-i(H_0-\Omega^\hbar)(\tau-s)/\omega^\hbar} \(s \Pi^\hbar_c\( V_a
  \psi^\hbar_c\)(s)\)ds.
\end{align*}
Integrating by parts,
\begin{align*}
  I^\hbar_1(\tau) & = -i\omega^\hbar 
  {\rm e}^{-i(H_0-\Omega^\hbar)(\tau-s)/\omega^\hbar}
  \(H_0-\Omega^\hbar\)^{-1}\Pi^\hbar_c\(s  V_a 
  \varphi^\hbar\)(s)\Big|_0^\tau\\
+i\omega^\hbar &\int_0^\tau
  {\rm e}^{-i(H_0-\Omega^\hbar)(\tau-s)/\omega^\hbar}
  \(H_0-\Omega^\hbar\)^{-1}\(\Pi^\hbar_c\( V_a 
  \varphi^\hbar\)+ s \Pi^\hbar_c\( V_a
  \d_\tau \varphi^\hbar\)\)(s)ds .
\end{align*}
From Lemma~\ref{lem:1}, there exists $C$ independent of $\hbar\in
(0,\hbar^*]$ such that
\begin{equation*}
  \left\lVert \hbar
    \(H_0-\Omega^\hbar\)^{-1}\Pi^\hbar_c\right\rVert_{L^2\to L^2}\le
  C. 
\end{equation*}
We infer, using \eqref{eq:aprioridtauphi},
\begin{equation*}
  |I^\hbar_1(\tau) |\le 
  C\frac{\omega^\hbar }{\hbar} \(\tau + \tau^3\). 
\end{equation*}
For $I^\hbar_2$, we have directly
\begin{equation*}
  |I^\hbar_2(\tau) |\le 
  C\int_0^\tau s\|\psi_c^\hbar(s)\|_{L^2}ds.
\end{equation*}
For the nonlinear term in Duhamel's formula (the second term of~\eqref{align}), we also write 
$$\Pi^\hbar_c(|\Psi^\hbar|^2\Psi^\hbar)= \Pi^\hbar_c(|\varphi^\hbar|^2\varphi^\hbar)+\Pi^\hbar_c\tR,$$
and set
\begin{align*}
  I^\hbar_3(\tau) & =\hbar^{d/2} \int_0^\tau
  {\rm e}^{-i(H_0-\Omega^\hbar)(\tau-s)/\omega^\hbar}\Pi^\hbar_c\( |\varphi^\hbar|^2
  \varphi^\hbar\)(s)ds,\\
I^\hbar_4(\tau) & = \hbar^{d/2}\int_0^\tau
  {\rm e}^{-i(H_0-\Omega^\hbar)(\tau-s)/\omega^\hbar}  \Pi^\hbar_c \tR (s)ds.
\end{align*}
We have, directly from \eqref{eq:aprioriR},
\begin{equation*}
  |I^\hbar_4(\tau) |\le 
  C\int_0^\tau  \(1 + \(\frac{\omega^\hbar
    s}{\hbar}\)^{d/2}\)\|\psi_c^\hbar(s)\|_{L^2}ds,
\end{equation*}
and performing an integration by parts for $I_3^\hbar$, using
\eqref{eq:aprioridtauphi3}, we have
\begin{equation*}
  |I^\hbar_3(\tau) |\le 
  C\frac{\omega^\hbar}{\hbar} \(1+\tau^2\). 
\end{equation*}
Since $d\le 2$ and $\omega^\hbar$ decays exponentially, we come up with:
\begin{equation*}
  \|\psi_c^\hbar(\tau)\|_{L^2}\le C\( \frac{\omega^\hbar}{\hbar}
  \(1+\tau^3\) + \int_0^\tau (1+s)\|\psi_c^\hbar(s)\|_{L^2}ds\).
\end{equation*}
Gronwall lemma yields
\begin{equation*}
  \|\psi_c^\hbar(\tau)\|_{L^2} \le C
  \frac{\omega^\hbar}{\hbar}(1+\tau^3){\rm e}^{C(\tau+\tau^2)}. 
\end{equation*}
Recalling again that $\omega^\hbar$ decays exponentially, we can write
that for all $c_0<\Gamma$ (the Agmon distance between the two wells),
\begin{equation*}
   \|\psi_c^\hbar(\tau)\|_{L^2} \le C
  (1+\tau^3){\rm e}^{C(\tau+\tau^2)-c_0/\hbar}. 
\end{equation*}
This is small as $\hbar\to 0$, provided that $\tau^2\ll/\hbar$. More
precisely, there exist  $c_1,c_2>0$ independent of $\hbar$ such that
\begin{equation}\label{eq:estpsic}
  \|\psi_c^\hbar(\tau)\|_{L^2} \le C {\rm e}^{-c_1/\hbar},\quad 0\le \tau\le
  \frac{c_2}{\sqrt\hbar}. 
\end{equation}
To conclude the proof of the proposition, set
\begin{equation*}
  \tw_L=\ta_L-a_L;\quad \tw_R= \ta_R-a_R. 
\end{equation*}
Subtracting the equation for $a_L$ from the equation for $\ta_L$, we
have
\begin{align*}
  i\dot {\tt w}_L^\hbar &= -\tw_R +\eta \tau \int_{\R^d}V_a
  \(\Psi^\hbar - a_L\varphi^\hbar_L\)\varphi_L^\hbar\\
&\quad +\delta \hbar^{d/2}\int_{\R^d}\(|\Psi^\hbar|^2\Psi^\hbar
-|a_L|^2a_L |\varphi_L^\hbar|^2 \varphi_L^\hbar \)\varphi_L^\hbar. 
\end{align*}
We have
\begin{equation*}
  \int_{\R^d}V_a
  \(\Psi^\hbar - a_L\varphi^\hbar_L\)\varphi_L^\hbar= \int_{\R^d}V_a
  \(\ta_R \varphi^\hbar_R + \psi_c^\hbar +\tw_L
  \varphi^\hbar_L\)\varphi_L^\hbar ,
\end{equation*}
therefore, since the product $\varphi_L^\hbar\varphi_R^\hbar$ decays
exponentially in $\hbar$, 
\begin{equation*}
  \left| \int_{\R^d}V_a
  \(\Psi^\hbar - a_L\varphi^\hbar_L\)\varphi_L^\hbar\right|\le
C\({\rm e}^{-c/\hbar} + \|\psi^\hbar_c\|_{L^2} + |\tw_L|\).
\end{equation*}
A similar estimate can be established for the other source term in the
equation for $\tw_L$. The equation for $\tw_R$ is handled in the same
fashion, and using \eqref{eq:estpsic}, we end up with:
\begin{equation*}
  |\dot {\tt w}_L^\hbar(\tau)| + |\dot {\tt w}_R^\hbar(\tau)|\le C\(
  |\tw_L(\tau)| 
  +|\tw_R(\tau)| + {\rm e}^{-c/\hbar} \), \quad 0\le \tau \le \frac{c_2}{\sqrt\hbar}.
\end{equation*}
Gronwall lemma and \eqref{eq:estpsic} then yield 
Proposition~\ref{prop:approx}. 
\end{proof}

\end{document}